\newcommand{\F}{\Gamma}
\newcommand{\modG}{\vert G\vert_{\Theta}}
\newcommand{\minG}{\vert G\vert_{\F}}
\newcommand{\VG}{\vert G\vert_\Omega}
\newcommand{\finC}[1]{{\F_{#1}}}
\newcommand{\minC}{\cC^{\F}}
\newcommand{\toC}[2]{
	\ifthenelse{\isempty{#1}}
		{\varphi_{#2}}
		{\varphi_{#2}(#1)}
}
\newcommand{\crit}{\normalfont\text{crit}}
\newcommand{\invLim}{\varprojlim}
\newcommand{\cb}{\mathfrak{c}}
\newcommand{\eb}{\mathfrak{f}}
\newcommand{\id}{\normalfont\text{id}}
\newcommand{\rest}{\upharpoonright}
\DeclareMathOperator{\medcup}{\mathsmaller{\bigcup}}
\newcommand{\Tt}{\Theta_{\normalfont\text{t}}}
\newcommand{\St}{S_{\normalfont\text{t}}}
\newcommand{\Ft}{\mathcal{T}_{<\aleph_0}^{\,\normalfont\text{t}}}
\newcommand{\Pt}[1]{P_{#1}^{\,\normalfont\text{t}}}
\newcommand{\vS}{\vec{S}}
\newcommand{\vSt}{\vS_{\normalfont\text{t}}}
\newcommand{\comp}{com\-pac\-ti\-fi\-ca\-tion}
\newcommand{\Ocomp}{$\Omega$-\comp}
\newcommand{\Ccomp}{$\mathscr{C}$-\comp}
\newcommand{\Csys}{$\mathscr{C}$-sys\-tem}
\newcommand{\HD}{Haus\-dorff}
\newcommand{\HDcomp}{\HD\ \comp}
\newcommand{\SC}{Stone-Čech}
\newcommand{\sol}{tough}
\newcommand{\esol}{end-\sol}
\newcommand{\nsol}{non-\sol}
\newcommand{\soly}{toughness}
\newcommand{\esoly}{end-\soly}
\newcommand{\nsoly}{non-\soly}
\def\calCommandfactory#1{%
   \expandafter\def\csname c#1\endcsname{\mathcal{#1}}}
\def\frakCommandfactory#1{%
   \expandafter\def\csname frak#1\endcsname{\mathfrak{#1}}}
\newcounter{ctr}
  \edef\X{\@Alph\c@ctr}
  \edef\Y{\@alph\c@ctr}
\renewcommand{\cC}{\mathscr{C}}
\renewcommand{\cD}{\mathscr{D}}
\renewcommand{\cK}{\mathscr{K}}
\newtheorem{theorem}{Theorem}[section]
\newtheorem{proposition}[theorem]{Proposition}
\newtheorem{corollary}[theorem]{Corollary}
\newtheorem{lemma}[theorem]{Lemma}
\newtheorem{obs}[theorem]{Observation}
\theoremstyle{definition}
\newtheorem{definition}[theorem]{Definition}
\newtheorem{fact}[theorem]{Fact}
\newtheorem{example}[theorem]{Example}
\theoremstyle{remark}
\newtheorem*{notation}{Notation}
\newtheorem*{convention}{Convention}
\newtheorem*{ack}{Acknowledgement}
\begin{document}

\title{Ends, tangles and critical vertex sets}


\author{Jan Kurkofka}
\address{University of Hamburg, Department of Mathematics, Bundesstraße 55 (Geomatikum), 20146 Hamburg, Germany}
\email{jan.kurkofka@uni-hamburg.de, max.pitz@uni-hamburg.de}

\author{Max Pitz}

\keywords{infinite graph; compactification; end; tangle; critical vertex set; critical.}

\subjclass[2010]{05C63, 54D35}

\begin{abstract}
We show that an arbitrary infinite graph $G$ can be compactified by its ends plus its critical vertex sets, where a finite set $X$ of vertices of an infinite graph is \emph{critical} if its deletion leaves some infinitely many components each with neighbourhood precisely equal to $X$.

We further provide a concrete separation system whose $\aleph_0$-tangles are precisely the ends plus critical vertex sets. Our tangle \comp\ $\minG$ is a quotient of Diestel's (denoted by $\modG$), and both use tangles to compactify a graph in much the same way as the ends of a locally finite and connected graph compactify it in its Freudenthal \comp .

Finally, generalising both Diestel's construction of $\modG$ and our construction of $\minG$, we show that $G$ can be compactified by every inverse limit of \comp s of the sets of components obtained by deleting a finite set of vertices.
Diestel's $\modG$ is the finest such \comp , and our $\minG$ is the coarsest one. Both coincide if and only if all tangles are ends. This answers two questions of Diestel.
\end{abstract}
\vspace*{-1.14cm}
\maketitle

\section{Introduction}

The ends of a locally finite, connected graph naturally compactify it in its \emph{Freudenthal \comp }~\cite{Bible,RDsBanffSurvey}. For a non-locally finite graph, however, adding its ends usually no longer suffices to compactify it.  This is where its tangles of infinite order, its $\aleph_0$-\emph{tangles}, enter the scene: Recently, Diestel~\cite{EndsAndTangles} combined Halin's notion of an \emph{end} of an infinite graph~(\cite{halin64}, from 1964) with Robertson and Seymour's notion of a \emph{tangle}~(\cite{GMX}, from 1991) as follows: He first observed that every end induces an  $\aleph_0$-tangle by orienting every finite order separation of the graph towards the side where the end lives, and then proceeded to show that adding all $\aleph_0$-tangles to an arbitrary infinite graph (possibly disconnected and not locally finite) does again suffice to compactify it, yielding the \emph{tangle \comp } $\modG$ of $G$. Here and in the following, we let $\Omega$ and $\Theta$ denote the set of ends and of $\aleph_0$-tangles of a graph $G$ respectively.

Like the Freudenthal compactification, $\modG$ has a totally disconnected remainder, i.e.\ the boundary at infinity contains no non-trivial connected components. Moreover, if $G$ is locally finite and connected, then its $\aleph_0$-tangles turn out to be precisely its ends---and the tangle \comp\ coincides with the Freudenthal \comp .

Our aim in this paper is twofold: First, we want to provide a comprehensive study of the tangle \comp\ $\modG$, as well as  other related \comp s of infinite graphs, and secondly, to apply  some of these insights in order to answer the following two questions of Diestel's paper~\cite[§6]{EndsAndTangles}:
\begin{enumerate}
\item \emph{``For which $G$ is $\modG$ the coarsest \comp\ in which its ends appear as distinct points?''}
\item \emph{``If it is not, is there a unique such [\comp ], and is there a canonical way to obtain it from $\modG$?''}
\end{enumerate}
Let us call a \comp\ of $G$ that also extends the ends in a meaningful way an \emph{\Ocomp} of $G$ (see Section~\ref{sec3} for a precise definition). Answering the first question, we shall see in Theorem~\ref{thm_Q1} that the tangle \comp\ $\modG$ is the coarsest \Ocomp\ of $G$ if and only if deleting any finite set of vertices from $G$ leaves only finitely many components, a property which we call \emph{tough}. This property turns out to be equivalent to the assertion that there are no $\aleph_0$-tangles other than the ends.

To answer the second question, we construct a new compactification $\minG$ whose remainder is formed by the ends plus the critical vertex sets of $G$ (a finite set $X\subseteq V(G)$ is \emph{critical} if its deletion leaves some infinitely many components each with neighbourhood precisely equal to $X$). We show that $\minG$ is again a tangle-type \comp, and that it can be obtained from $\modG$ as a natural quotient. Strengthening these observations considerably, we then proceed to show that for a natural class of compactifications of $G$---which we call \Ocomp s induced by a \Csys ---our newly constructed $\minG$ is the least such compactification and Diestel's $\modG$ is in fact the unique largest such compactification, see Theorem~\ref{Csystem:leastAndGreatest}. Phrased differently, this means that $\minG$ is a quotient of every \Ocomp\ induced by a \Csys\ which in turn is always a  quotient of $\modG$. In particular, we may rephrase our answer to question (i), observing that $\modG$ is the coarsest \comp\ in which its ends appear as distinct points if and only if the class of \Ocomp s induced by a \Csys\ is trivial.

This paper is organised as follows. 
In Section~\ref{sec2}, we provide details on tangles and briefly review the construction of Diestel's tangle compactification.  In Section~\ref{sec3}, we formally introduce the concept of \Ocomp s and present our answer to question (i) announced above. 

In Section~\ref{sec4}, we formally introduce \emph{critical vertex sets}, and show that every infinite graph $G$ is compactified by its ends plus its critical vertex sets, giving rise to a compactification $\minG$. Furthermore, we show that the critical vertex sets naturally partition the $\aleph_0$-tangles that are not ends (the so-called \emph{ultrafilter tangles}). This defines an equivalence relation ${\sim}$ on $\Theta$ such that $\modG/{\sim}$ is an \Ocomp\ of $G$ with the desired remainder $\Omega\sqcup\crit(G)$, where $\crit(G)$ denotes the collection of all critical vertex sets.
Notably, the number of critical vertex sets is bounded above by the cardinality of the graph's vertex set, and the number of ultrafilter tangles is bounded below by the cardinal number $\vert\crit(G)\vert\cdot 2^{\frakc}$.

In Section~\ref{sec5}, we show that $\minG$ (or equivalently: the quotient $\modG/{\sim}$) is again a tangle-type \comp . More precisely, we use critical vertex sets to explicitly describe a collection $\St$ of finite order separations of $G$ such that the $\aleph_0$-tangles \emph{of} $\St$, tangles of infinite order that only orient the separations in $\St$, correspond precisely to the ends plus critical vertex sets.
The $\aleph_0$-tangles of $\St$ differ from the original $\aleph_0$-tangles in that they do not orient all the finite order separations, just those in $\St$, and so there are significantly fewer of the new ones compared to the original ones.

Next, in Section~\ref{sec6}, we formally introduce the concept of \Csys s.
Recall that the graph-theoretic ends of a graph (i.e.\ equivalence classes of rays) correspond precisely to elements of the inverse limit of the system $\{\cC_X,\cb_{X',X},\cX\}$ where $\cX$ denotes the collection of all finite subsets of $V(G)$ directed by inclusion; where $\cC_X$ is the set of components of $G-X$ and for $X' \supseteq X$, the bonding map $\cb_{X',X}\colon\cC_{X'}\to\cC_X$ sends each component of $G-X'$ to the unique component of $G-X$ including it.
Diestel showed that the limit of the inverse system $\{\beta(\cC_X),\beta(\cb_{X',X}),\cX\}$ describes the space $\Theta$ of $\aleph_0$-tangles, where $\beta(\cC_X)$ is the \SC\ \comp\ of the discrete space $\cC_X$ and the bonding maps $\beta(\cb_{X',X})$ are provided by the \SC\ property. From this description, the inclusion $\Omega \subseteq \Theta$ is now evident. Generalising this idea, we call an inverse system $\{\alpha(\cC_X),\fraka_{X',X},\cX\}$ of \HDcomp s of the component spaces $\cC_X$ a $\cC$-\emph{system} (\emph{of} $G$) if the bonding maps $\fraka_{X',X}$ continuously extend the underlying maps $\cb_{X',X}$. As our main result of this section, Theorem~\ref{Csystem:EveryCsystemInducesOmegaComp}, we show that every \Csys\ induces an \Ocomp\ of $G$ in the way Diestel used his \Csys\ to compactify $G$. 

Finally, in Section~\ref{sec7}, we shall see that also our newly constructed compactification $\minG$ is in fact induced by a \Csys. Indeed, adding to any $\cC_X$ the critical vertex sets contained in $X$ yields a natural \HD\ \comp\ $\finC{X}$ of $\cC_X$ with finite remainder, which in turn give rise to a \Csys. We then proceed to compare the different compactifications induced by \Csys s. In particular, we show that these $\finC{X}$ form the least \Csys\ with respect to a natural partial ordering. Consequently, the \Ocomp\ $\minG$ it induces turns out to be the coarsest of its kind, whereas the tangle \comp\ $\modG$ is the finest one, Theorem~\ref{Csystem:leastAndGreatest}.
We conclude this paper by showing that $\minG$ and $\modG$ are equivalent if and only if every $\cC_X$ is finite, i.e. if and only if the graph is \sol , if and only if all $\aleph_0$-tangles are ends.

\begin{ack}
We thank Johannes Carmesin and Reinhard Diestel for interesting discussions on the topic of infinite-order tangles.
\end{ack}

\section{Reviewing Diestel's tangle compactification}
\label{sec2}

\subsection{Compactifications}

A \emph{compactification} of a topological space $X$ is an ordered pair $(K,h)$ where $K$ is a compact topological space and $h\colon X\hookrightarrow K$ is an embedding of $X$ as a dense subset of $K$.
Sometimes we also refer to $K$ as a \comp\ of $X$ if the embedding $h$ is clearly understood (e.g. if $h$ is the identity on $X$).
The space $K\setminus h[X]$ is called the \emph{remainder} of the \comp .

If $(K,h)$ and $(K',h')$ are two \comp s of $X$ we write $(K,h)\le (K',h')$ whenever there exists a continuous mapping $f\colon K'\to K$ for which the diagram
\begin{equation*}
\begin{tikzcd}[column sep=1.5em]
& X\arrow[dr, "h'", right hook->]\arrow[dl, "h"', left hook->]\\
K && K'\arrow[ll, "f"']
\end{tikzcd}
\end{equation*}
commutes.
Then $(K,h)$ is said to be \emph{coarser} than $(K',h')$, and $(K',h')$ in turn is said to be \emph{finer} than $(K,h)$.
When we want to say that $(K,h)\le (K',h')$ is witnessed by a map $f\colon K'\to K$ we write $f\colon (K',h')\ge (K,h)$ for short.
If there exists a homeomorphism $f\colon (K',h')\ge (K,h)$, then we say that the two \comp s $(K,h)$ and $(K',h')$ of $X$ are (\emph{topologically}) \emph{equivalent} (this is symmetric).
Since every continuous map into a \HD\ space is determined by its restriction to any dense subset of its domain (cf. \cite[Corollary 13.14]{Willard}), the witness $f\colon (K',h')\ge (K,h)$ is unique provided that $K$ is \HD .

\begin{lemma}[{\cite[Lemmas 19.7 and 19.8]{Willard}}]
\label{Top:compactification:witnessBehaviour}
Let $(K,h)$ and $(K',h')$ be two \HDcomp s of a topological space $X$. 
\begin{enumerate}
\item If $f\colon (K',h')\ge (K,h)$ then $f\rest h'[X]$ is a homeomorphism between $h'[X]$ and $h[X]$, and $f[K'\setminus h'[X]]=K\setminus h[X]$.
\item The \comp s $(K,h)$ and $(K',h')$ are topologically equivalent if and only if both $(K,h)\le (K',h')$ and $(K',h')\le (K,h)$ hold.
\end{enumerate}
\end{lemma}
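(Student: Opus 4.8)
The plan is to verify both parts directly, using only elementary point-set topology together with the uniqueness of continuous extensions into Hausdorff spaces recalled above.

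For part (i), I would fix a witness $f\colon(K',h')\ge(K,h)$, so that $f\colon K'\to K$ is continuous with $f\circ h'=h$. First observe that, since $h$ and $h'$ are embeddings, the restriction $f\rest h'[X]$ coincides with $h\circ(h')^{-1}$ as a map $h'[X]\to h[X]$; being a composition of two homeomorphisms it is itself a homeomorphism, in particular a bijection onto $h[X]$. Next, $f$ is surjective: $f[K']$ is compact, hence closed in the Hausdorff space $K$, and it contains the dense set $h[X]$, so $f[K']=K$. The substantial step is the identity $f^{-1}(h[X])=h'[X]$. The inclusion ``$\supseteq$'' is immediate from $f\circ h'=h$. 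For ``$\subseteq$'', suppose $f(p)=h(x_0)$ for some $p\in K'$ and $x_0\in X$; choosing a net $\bigl(h'(x_\alpha)\bigr)$ in the dense set $h'[X]$ with $h'(x_\alpha)\to p$, continuity of $f$ gives $h(x_\alpha)=f(h'(x_\alpha))\to f(p)=h(x_0)$, whence $x_\alpha\to x_0$ in $X$ because $h$ is an embedding, and therefore $h'(x_\alpha)\to h'(x_0)$ in $K'$. Since $K'$ is Hausdorff, net limits are unique, so $p=h'(x_0)\in h'[X]$. Combining surjectivity of $f$ with this identity then yields
\[
f[K'\setminus h'[X]]=f\bigl[K'\setminus f^{-1}(h[X])\bigr]=f[K']\setminus h[X]=K\setminus h[X],
\]
which together with the first observation proves (i).

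For part (ii), the direction ``topologically equivalent $\Rightarrow$ both $\le$'' is routine: a homeomorphism $\varphi\colon(K',h')\ge(K,h)$ witnesses $(K,h)\le(K',h')$, and applying $\varphi^{-1}$ to $\varphi\circ h'=h$ shows that $\varphi^{-1}$ witnesses $(K',h')\le(K,h)$. For the converse I would take witnesses $f\colon(K',h')\ge(K,h)$ and $g\colon(K,h)\ge(K',h')$ and consider $g\circ f\colon K'\to K'$: it is continuous and satisfies $(g\circ f)\circ h'=g\circ h=h'$, so it agrees with $\id_{K'}$ on the dense subset $h'[X]$ of the Hausdorff space $K'$, forcing $g\circ f=\id_{K'}$; symmetrically $f\circ g=\id_K$. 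Hence $f$ is a homeomorphism and the two \comp s are topologically equivalent.

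I expect the only delicate point to be the identity $f^{-1}(h[X])=h'[X]$ in part (i), i.e.\ the fact that $f$ cannot carry a point of the remainder $K'\setminus h'[X]$ into $h[X]$; every other step is a standard compactness-and-density argument. This is precisely where Hausdorffness of $K'$ and the hypothesis that $h$ is an \emph{embedding} (rather than merely a continuous injection) are used, through the uniqueness of net limits.
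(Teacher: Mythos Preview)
Your proof is correct. Note, however, that the paper does not supply its own proof of this lemma: it is stated with a citation to Willard's \emph{General Topology}, Lemmas~19.7 and~19.8, and no proof environment follows. So there is nothing in the paper to compare against directly.

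That said, your argument is essentially the standard textbook proof one finds in Willard. The only nontrivial step, as you correctly identify, is the inclusion $f^{-1}(h[X])\subseteq h'[X]$ in part~(i), and your net argument handles it cleanly: pick a net in $h'[X]$ converging to $p$, push it through $f$ to obtain $h(x_\alpha)\to h(x_0)$, pull back through the embedding $h$ to get $x_\alpha\to x_0$ in $X$, push forward through $h'$, and invoke uniqueness of limits in the Hausdorff space $K'$. Part~(ii) is the expected application of the fact (already recalled in the paper, just before this lemma) that continuous maps into Hausdorff spaces are determined by their values on a dense subset.
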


A \emph{one-point compactification} is a compactification with singleton remainder.
It is well known (cf.~\cite[Theorem 29.1]{Munkres}) that a topological space $X$ has a one-point \HD\ \comp\ $(\omega X,\iota)$ if and only if $X$ is locally compact\footnote{A topological space $X$ is \emph{locally compact} if for each of its points there is some compact subspace of $X$ which includes an open neighbourhood of that point.} and Hausdorff but not compact, and that $(\omega X,\iota)$ is unique up to topological equivalence.

Suppose now that $X$ is a discrete topological space.
Since $X$ is locally compact, $X$ is open in all of its \HDcomp s 
(cf.~\cite[Theorem~3.6.6]{EngelkingBook}).
\begin{itemize}
\item If $X$ is infinite and $\ast$ is a point that is not in $X$ we can extend $X$ to its one-point \HD\ compactification $\omega X:=X\sqcup\{\ast\}$ by declaring as open in addition to the open sets of $X$, for every finite $A\subseteq X$, the sets $\omega X\setminus A$ and taking the topology on $\omega X$ this generates.

\item If we pair the space $\beta X$ of all ultrafilters on $X$ carrying the topology whose basic open sets are of the form $\{U\in\beta X\mid A\in U\}$, one for each $A\subseteq X$, with the embedding that sends every $x\in X$ to the principal ultrafilter on $X$ generated by $\{x\}$, then this yields the finest \HDcomp\ of $X$, its Stone-Čech \comp\ (which is unique up to topological equivalence).
By the \SC\ property every continuous function $f\colon X\to T$ into a compact \HD\ space $T$ has a continuous extension $\beta f\colon\beta X\to T$ with $\beta f\rest X=f$ (cf.~\cite[Theorem~3.5.1]{EngelkingBook}).
\end{itemize}

\begin{theorem}[{\cite[Corollary 7.4]{TheoryOfUltrafilters}}]
\label{Top:compactification:StoneCechCard}
If $X$ is an infinite set, then $\vert\beta X\vert = 2^{2^{\vert X\vert}}$.
\end{theorem}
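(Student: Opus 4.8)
The plan is to prove the two inequalities $\vert\beta X\vert \le 2^{2^{\vert X\vert}}$ and $\vert\beta X\vert \ge 2^{2^{\vert X\vert}}$ separately. The first is immediate: a point of $\beta X$ is by definition an ultrafilter on $X$, hence a subfamily of $\mathcal{P}(X)$, so $\beta X \subseteq \mathcal{P}(\mathcal{P}(X))$ and therefore $\vert\beta X\vert \le 2^{2^{\vert X\vert}}$. The reverse inequality is the substantive part (essentially Pospíšil's theorem), and I would prove it by constructing an injection from $\{0,1\}^{2^{\vert X\vert}}$ into $\beta X$.

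The tool for this is an \emph{independent family} on $X$, by which I mean a family $\mathcal{A}\subseteq\mathcal{P}(X)$ such that $\bigcap\mathcal{F}\setminus\bigcup\mathcal{G}\neq\emptyset$ for all disjoint finite $\mathcal{F},\mathcal{G}\subseteq\mathcal{A}$. Suppose we have such a family of size $2^{\vert X\vert}$. Then for each $\varepsilon\colon\mathcal{A}\to\{0,1\}$ the family $\{A\in\mathcal{A}:\varepsilon(A)=1\}\cup\{X\setminus A: A\in\mathcal{A},\ \varepsilon(A)=0\}$ has the finite intersection property (this is precisely what independence of $\mathcal{A}$ asserts), so it extends to an ultrafilter $U_\varepsilon\in\beta X$. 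If $\varepsilon\neq\varepsilon'$, pick $A\in\mathcal{A}$ with, say, $\varepsilon(A)=1$ and $\varepsilon'(A)=0$; then $A\in U_\varepsilon$ while $A\notin U_{\varepsilon'}$ (as $X\setminus A\in U_{\varepsilon'}$), so $U_\varepsilon\neq U_{\varepsilon'}$. Hence $\varepsilon\mapsto U_\varepsilon$ is injective and $\vert\beta X\vert\ge 2^{\vert\mathcal{A}\vert}=2^{2^{\vert X\vert}}$, which together with the upper bound finishes the proof.

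The main obstacle is therefore the existence of an independent family of size $2^{\vert X\vert}$, for which I would use the classical trick of Hausdorff (also credited to Fichtenholz and Kantorovich): rather than work on $X$ directly, consider the set $Y$ of all pairs $(F,\mathcal{G})$ where $F$ is a finite subset of $X$ and $\mathcal{G}\subseteq\mathcal{P}(F)$. Since $X$ is infinite there are $\vert X\vert$ finite subsets of $X$ and only finitely many admissible $\mathcal{G}$ for each, so $\vert Y\vert=\vert X\vert$, and it suffices to build an independent family on $Y$ and pull it back along a bijection $Y\to X$. For each $S\subseteq X$ put $A_S:=\{(F,\mathcal{G})\in Y: S\cap F\in\mathcal{G}\}$. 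One checks that $S\mapsto A_S$ is injective: if $\alpha\in S\setminus S'$ then $(\{\alpha\},\{\{\alpha\}\})\in A_S\setminus A_{S'}$, so $\{A_S : S\subseteq X\}$ has size $2^{\vert X\vert}$. And one checks that this family is independent: given pairwise distinct $S_1,\dots,S_m,S_1',\dots,S_n'$ (with the two lists disjoint), choose a finite $F\subseteq X$ so large that the traces $S_1\cap F,\dots,S_m\cap F,S_1'\cap F,\dots,S_n'\cap F$ are pairwise distinct — absorb into $F$, for each of the finitely many pairs, one point of its symmetric difference — and set $\mathcal{G}:=\{S_i\cap F : i\le m\}$; then $(F,\mathcal{G})$ lies in every $A_{S_i}$ and, the traces being distinct, in no $A_{S_j'}$, so $\bigcap_i A_{S_i}\setminus\bigcup_j A_{S_j'}\neq\emptyset$. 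This produces the desired family and completes the argument; since the statement is classical, one may of course also just invoke it, as is done here.
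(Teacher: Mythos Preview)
Your proof is correct and is the standard argument (Pospíšil's theorem via the Hausdorff--Fichtenholz--Kantorovich construction of a large independent family). The paper itself does not prove this statement at all: it is stated as a citation to \cite[Corollary~7.4]{TheoryOfUltrafilters} and used as a black box, exactly as you remark in your final sentence.
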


\subsection{Graphs with ends, and inverse limits}
\label{SummaryEndsAndTangles}
Given a graph $G=(V,E)$ we write $\cX$ for the collection of all finite subsets of its vertex set $V$, partially ordered by inclusion.
A (combinatorial) \emph{end} of a graph is an equivalence class of rays, where a \emph{ray} is a 1-way infinite path.
Two rays are \emph{equivalent} if for every $X\in\cX$ both have a subray (also called \emph{tail}) in the same component of $G-X$.
In particular, for every end $\omega$ of $G$ there is a unique component of $G-X$ in which every ray of $\omega$ has a tail, and we denote this component by $C(X,\omega)$.
The set of ends of a graph $G$ is denoted by $\Omega=\Omega(G)$. 
Further details on ends as well as any graph-theoretic notation not explained here can be found in Diestel's book~\cite{Bible}, especially in Chapter~8.

If $\omega$ is an end of $G$, then the components $C(X,\omega)$ are compatible in that they form a limit of an inverse system. Before we provide more details, we dedicate a paragraph to the definition of an inverse limit:

A partially ordered set $(I,\le)$ is said to be \emph{directed} if for every two $i,j\in I$ there is some $k\in I$ with $k\ge i,j$.
Let $(\,X_i\mid i\in I\,)$ be a family of topological spaces indexed by some directed poset $(I,\le)$.
Furthermore, suppose that we have a family $(\,\varphi_{ji}\colon X_j\to X_i\,)_{i\le j\in I}$ of continuous mappings which are the identity on $X_i$ in case of $i=j$ and which are \emph{compatible} in that $\varphi_{ki}=\varphi_{ji}\circ\varphi_{kj}$ for all $i\le j\le k$. Then both families together are said to form an \emph{inverse system}, and the maps $\varphi_{ji}$ are called its \emph{bonding maps}. 
We denote such an inverse system by $\{X_i,\varphi_{ji},I\}$ or $\{X_i,\varphi_{ji}\}$ for short if $I$ is clear from context.
Its \emph{inverse limit} $\invLim{} X_i=\invLim{}(\,X_i\mid i\in I\,)$ is the topological space
\begin{align*}
\invLim{} X_i=\{\,(x_i)_{i\in I}\mid \varphi_{ji}(x_j)=x_i\text{ for all }i\le j\,\}\subseteq \prod_{i\in I}X_i.
\end{align*}
Whenever we define an inverse system without specifying a topology for the spaces $X_i$ first, we tacitly assume them to carry the discrete topology.
If each $X_i$ is (non-empty) compact \HD , then so is $\invLim{}X_i$.

Now we describe an inverse system giving the end space:
We note that $\cX$ is directed by inclusion, and for every $X\in\cX$ we let $\cC_X$ be the set of components of $G-X$.
Then letting $\cb_{X',X}\colon \cC_{X'}\to\cC_X$ for $X'\supseteq X$ send each component of $G-X'$ to the unique component of $G-X$ including it turns the sets $\cC_X$ into an inverse system $\{\cC_X,\cb_{X',X},\cX\}$. Clearly, its inverse limit consists precisely of the directions of the graph: choice maps $f$ assigning to every $X\in\cX$ a component of $G-X$ such that $f(X')\subseteq f(X)$ whenever $X'\supseteq X$.
In 2010, Diestel and Kühn~\cite{Ends} showed that

\begin{theorem}[{\cite[Theorem 2.2]{Ends}}]\label{EndsAreDirections}
Let $G$ be any graph. Then there is a canonical bijection between the (combinatorial) ends of $G$ and its directions, i.e. $\Omega =\invLim{}\cC_X$.
\end{theorem}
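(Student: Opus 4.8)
The plan is to exhibit the canonical map $\Omega\to\invLim\cC_X$ and prove it bijective. Send an end $\omega$ to the family $(C(X,\omega))_{X\in\cX}$. This lies in the inverse limit: for $X\subseteq X'$ the tail in $\omega$ of a fixed ray lies in $C(X',\omega)$ and is disjoint from $X$, hence lies in the component $C(X,\omega)$ of $G-X$, so the connected set $C(X',\omega)$ — disjoint from $X$ and meeting $C(X,\omega)$ — is contained in $C(X,\omega)$, i.e.\ $\cb_{X',X}(C(X',\omega))=C(X,\omega)$. The map is injective straight from the definition of the end relation: inequivalent rays are separated by some $X\in\cX$, yielding distinct components there. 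So everything reduces to surjectivity.

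Fix a direction $f$ and write $C_X:=f(X)$; we must build a ray with a tail in every $C_X$, which then induces an end mapping to $f$, unique by injectivity. Note first that each $C_X$ is connected and infinite — infinite because otherwise $Y:=X\cup V(C_X)$ is finite while $C_Y\subseteq C_X$ avoids $V(C_X)$, which is impossible — and that if $w\in C_X$ then $w$ has a neighbour in $C_{X\cup\{w\}}$, the latter being a component of the connected graph $C_X-w$. The obvious greedy attempt, always stepping from the current vertex into the currently chosen component, is \emph{not} enough: from finite information the walk can drift off towards a different end (think of two inequivalent rays glued at one dominating vertex). The remedy is to record a growing family of finite ``separators'' already crossed and to force each new stretch of the ray into the next chosen component.

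So, for $G$ countable (the general case needs only minor extra bookkeeping), enumerate $V(C_\emptyset)=\{w_0,w_1,\dots\}$ and recursively build a path $P_n\subseteq C_\emptyset$ ending at a vertex $x_n$, with $P_0\subseteq P_1\subseteq\cdots$, and finite sets $Y_0\subseteq Y_1\subseteq\cdots$, keeping: $x_n\in C_{Y_n}$; all of $V(P_n)$ except $x_n$ lies in $Y_n$; and $\{w_0,\dots,w_n\}\subseteq Y_n$. At step $n$ let $W:=Y_n\cup\{w_{n+1},x_n\}$; as $C_W\subseteq C_{Y_n}$, $x_n\in C_{Y_n}$ and $C_{Y_n}$ is connected, choose a path $Q\subseteq C_{Y_n}$ from $x_n$ to $C_W$ meeting $C_W$ only in its last vertex $x_{n+1}$, and set $P_{n+1}:=P_n\cup Q$, $Y_{n+1}:=W\cup(V(Q)\setminus\{x_{n+1}\})$; since the new separator vertices all lie outside $C_W$, the component $C_W$ survives intact in $G-Y_{n+1}$, so $C_{Y_{n+1}}=C_W\ni x_{n+1}$ and the invariants persist. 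Then $R:=\bigcup_nP_n$ is a ray, each $C_{Y_n}$ contains a tail of $R$ (later stretches lie in $C_{Y_m}\subseteq C_{Y_n}$), and $\bigcup_nY_n=V(C_\emptyset)$ forces $\bigcap_nC_{Y_n}=\emptyset$. Finally, given $X\in\cX$ we may assume $X\subseteq V(C_\emptyset)$ (deleting from $X$ vertices outside $C_\emptyset$ changes neither $f(X)$ nor the component of $G-X$ meeting $R$); pick $n$ with $X\cap C_{Y_n}=\emptyset$, and note $C_{Y_n}$ is connected, disjoint from $X$, and meets $C_X$ (since $C_{Y_n\cup X}\neq\emptyset$), hence $C_{Y_n}\subseteq C_X$; so the tail of $R$ inside $C_{Y_n}$ witnesses $C(X,[R])=C_X=f(X)$, as needed. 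The main obstacle throughout is exactly this commitment to the correct end — what forces the separator bookkeeping in place of the naive greedy walk.
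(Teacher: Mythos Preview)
The paper itself does not prove this theorem; it is quoted from Diestel--K\"uhn~\cite{Ends} without proof. So there is nothing in the present paper to compare against, and I assess your argument on its own.

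For countable $G$ (more precisely, countable $C_\emptyset$) your construction is correct and well explained; the diagnosis of why the naive greedy walk can drift to the wrong end is exactly right, and the bookkeeping with the $Y_n$'s together with the enumeration $\{w_0,w_1,\dots\}$ cures it.

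The genuine gap is the parenthetical ``the general case needs only minor extra bookkeeping''. Your final verification---that the constructed ray $R$ has a tail in every $C_X$---rests on finding $n$ with $X\cap C_{Y_n}=\emptyset$, which you obtain from $\bigcup_n Y_n\supseteq V(C_\emptyset)$. For uncountable $C_\emptyset$ no countable chain of finite $Y_n$'s can achieve this, and indeed the conclusion $\bigcap_n C_{Y_n}=\emptyset$ may simply be false: if $G=K_\kappa$ with $\kappa>\aleph_0$ then the unique direction has $C_Y=G\setminus Y$ for every finite $Y$, so $\bigcap_n C_{Y_n}=G\setminus\bigcup_n Y_n\neq\emptyset$ for \emph{every} such chain. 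The theorem still holds for $K_\kappa$, of course, but your argument does not reach it. Handling arbitrary graphs is not a matter of bookkeeping; it requires a different mechanism to guarantee that the tail of $R$ lands in $C_X$ for \emph{every} finite $X\in\cX$, not just those contained in the countable set $\bigcup_n Y_n$. As written, the proof is complete only in the countable case.
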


Before we provide details on the Freudenthal \comp , we turn $G$ into a topological space.
In the \emph{1-complex} of $G$ which we denote also by $G$, every edge $e=xy$ is a homeomorphic copy $[x,y]:=\{x\}\sqcup\mathring{e}\sqcup\{y\}$ of $[0,1]$ with $\mathring{e}$ corresponding to $(0,1)$. 
The point set of $G$ is $V\sqcup\bigsqcup_{e\in E}\mathring{e}$.
Points in $\mathring{e}$ are called \emph{inner edge points}, and they inherit their basic open neighbourhoods from $(0,1)$. The space $[x,y]$ is called a \emph{topological edge}, but we refer to it simply as \emph{edge} and denote it by $e$.
For each subcollection $F\subseteq E$ we write $\mathring{F}$ for the set $\bigsqcup_{e\in F}\mathring{e}$ of inner edge points of edges in $F$.
The basic open neighbourhoods of a vertex $v$ of $G$ are given by unions $\bigcup_{e\in E(v)}[v,i_e)$ of half open intervals with each $i_e$ some inner edge point of $e$.
The 1-complex of $G$ is compact if and only if the graph $G$ is finite.

\begin{convention}
For edges $e$ and edge sets $F$ we always mean $\mathring{e}$ and $\mathring{F}$ in the sense above, not the interior with respect to some ambient topological space.
\end{convention}

We extend (the 1-complex of) $G$ to a topological space $\VG=G\sqcup\Omega$ by declaring as open in addition to the open sets of $G$, for all $X\in\cX$ and all $\cC\subseteq\cC_X$, the sets
\begin{align*}
\cO_{\VG}(X,\cC):=\medcup\cC\cup\mathring{E}(X,\medcup\cC)\cup\Omega(X,\cC)
\end{align*}
and taking the topology on $\VG$ that this generates.
Here, $\Omega(X,\cC)$ denotes the collection of those ends $\omega$ of $G$ with $C(X,\omega)\in\cC$.
Given $X\in\cX$ and an end $\omega$ of $G$ we write $\hat{C}(X,\omega)$ for $\cO_{\VG}(X,\{C(X,\omega)\})$.
For graphs $G$ that are locally finite and connected, their Freudenthal \comp\ coincides with $\VG$.
For arbitrary $G$ this is not true. However, $\VG$ still is a reasonable extension of $G$ also in the non-locally finite case, with a new point living at each end of the graph. But beware that $\VG$ is compact if and only if every $\cC_X$ is finite (cf.~\cite[Theorem 4.1]{VTopComp}; we provide a short proof in Lemma~\ref{VGcompact}).

\subsection{Tangles}
Next, we formally introduce tangles for a particular type of `separation system', referring the reader to~\cite{AbstractSepSys} for an overview of the full theory and its applications.
A (\emph{finite order}) \emph{separation} of a graph $G$ is a set $\{A,B\}$ with $A\cap B$ finite and $A\cup B=V$ such that $G$ has no edge between $A\setminus B$ and $B\setminus A$.
The ordered pairs $(A,B)$ and $(B,A)$ are then called the \emph{orientations} of the separation $\{A,B\}$, or (\emph{oriented}) \emph{separations}.
Informally we think of $A$ and $B$ as the \emph{small side} and the \emph{big side} of $(A,B)$, respectively. 
Furthermore, we think of the separation $(A,B)$ as \emph{pointing towards} its big side $B$ and \emph{away from} its small side $A$.
If $S$ is a collection of unoriented separations, then we write $\vS$ for the collection of their orientations. 
A subset $O$ of $\vS$ is an \emph{orientation} of $S$ if it contains precisely one of $(A,B)$ and $(B,A)$ for each separation $\{A,B\}$ in $S$.

We define a partial ordering $\le$ on $\vS$ by letting
\begin{align*}
(A,B)\le (C,D):\Leftrightarrow A\subseteq C\text{ and }B\supseteq D.
\end{align*}
Here, we informally think of the oriented separation $(A,B)$ as \emph{pointing towards} $\{C,D\}$ and its orientations, whereas we think of $(C,D)$ as \emph{pointing away from} $\{A,B\}$ and its orientations.
If $O$ is an orientation of $S$ and no two distinct separations $(B,A)$ and $(C,D)$ in $O$ satisfy $(A,B)<(C,D)$, i.e., no two distinct separations in $O$ point away from each other, then we call $O$ \emph{consistent}.

We call a set $\sigma\subseteq\vS$ of oriented separations a \emph{star} (\emph{in} $S$) if every two distinct separations $(A,B)$ and $(D,C)$ in $\sigma$ point towards each other, i.e. satisfy $(A,B)\le (C,D)$.
The \emph{interior} of a star $\sigma=\{\,(A_i,B_i)\mid i\in I\,\}$ is the intersection $\bigcap_{i\in I}B_i$ of all the big sides.
We say that an orientation $O$ of $S$ \emph{avoids} a subcollection $\cF\subseteq\vS$ if no subset of $O$ is contained in $\cF$.

\begin{definition}
Let $S$ be a collection of finite order separations of a graph $G$ and let $\cF$ be a collection of stars in $S$. An \emph{$\cF$-tangle} (\emph{of $S$}) is a consistent orientation of $S$ that avoids $\cF$.
\end{definition}

\subsection{Ends and Tangles}
We conclude this section by giving a summary of Diestel's paper~\cite{EndsAndTangles}.
From now on, let $G=(V,E)$ be a fixed infinite graph and let $S$ be the collection of all its finite order separations.
We write $\cT_{<\aleph_0}$ for the set of all finite stars in $S$ of finite interior, and we write $\cT$ for the set of all stars in $S$ of finite interior (so $\cT_{<\aleph_0}\subseteq\cT$). 
Instead of $\cT_{<\aleph_0}$-tangles (of $S$) we say $\aleph_0$-tangles (of $G$), and we write $\Theta$ for the collection of all $\aleph_0$-tangles.\footnote{Diestel~\cite{EndsAndTangles} showed that this definition is equivalent to Robertson and Seymour's~\cite{GMX}.}
Clearly, every $\cT$-tangle is an $\aleph_0$-tangle.
If $\omega$ is an end of $G$, then letting
\begin{align*}
\tau_\omega:=\{\,(A,B)\in\vS\mid C(A\cap B,\omega)\subseteq G[B\setminus A]\,\}
\end{align*}
defines a bijection $\omega\mapsto\tau_\omega$ between the ends of $G$ and the $\cT$-tangles. Therefore, we call these tangles the \emph{end tangles} of $G$.
By abuse of notation we write $\Omega$ for the collection of all end tangles of $G$, so we have $\Omega\subseteq\Theta$.

In order to understand the $\aleph_0$-tangles that are not ends, Diestel studied an inverse limit description of $\Theta$ which we introduce in a moment.
First, we note that every finite order separation $\{A,B\}$ corresponds to the bipartition $\{\cC,\cC'\}$ of the component space $\cC_X$ with $X=A\cap B$ and
\begin{align*}
\{A,B\}=\big\{\,V[\cC]\cup X\,,\,X\cup V[\cC']\,\big\}
\end{align*}
where $V[\cC]=\bigcup_{C\in\cC}V(C)$,
and this correspondence is bijective for fixed $X\in\cX$. 
For all $\cC\subseteq\cC_X$ we write 
\begin{align*}
s_{X\to\cC}=\big(\,V\setminus V[\cC]\,,\,X\cup V[\cC]\,\big)\quad\text{and}\quad s_{\cC\to X}=\big(\,V[\cC]\cup X\,,\,V\setminus V[\cC]\,\big)
\end{align*}
whereas we write $s_{X\to C}$ and $s_{C\to X}$ instead of $s_{X\to\{C\}}$ and $s_{\{C\}\to X}$, respectively.
Hence if $\tau$ is an $\aleph_0$-tangle of the graph, then for each $X\in\cX$ it also chooses one \emph{big side} from each bipartition $\{\cC,\cC'\}$ of $\cC_X$, namely the $\cK\in\{\cC,\cC'\}$ with $s_{X\to\cK}\in\tau$. 
Since it chooses theses sides consistently, it induces an ultrafilter $U(\tau,X)$ on $\cC_X$, one for every $X\in\cX$, which is given by
\begin{align*}
U(\tau,X)=\{\,\cC\subseteq\cC_X\mid s_{X\to\cC}\in\tau\,\},
\end{align*}
and these ultrafilters are compatible in that they form a limit of the inverse system $\{\,\beta(\cC_X)\,,\,\beta(\cb_{X',X})\,,\,\cX\,\}$.
Here, each set $\cC_X$ is endowed with the discrete topology and $\beta(\cC_X)$ denotes its \SC\ \comp .
Every bonding map $\beta(\cb_{X',X})$ is the unique continuous extension of $\cb_{X',X}$ that is provided by the \SC\ property.
More explicitly, the map $\beta(\cb_{X',X})$ sends each ultrafilter $U'\in\beta (\cC_{X'})$ to its \emph{restriction}
\begin{align*}
U'\rest X=\{\,\cC\subseteq\cC_X\mid\exists\,\cC'\in U'\colon\cC\supseteq\cC'\rest X\,\}\in\beta (\cC_X)
\end{align*}
where $\cC'\rest X=\cb_{X',X}[\cC']$.
Resuming Diestel's notation, we write $\cU_X$ for $\beta(\cC_X)$ and $f_{X',X}$ for $\beta(\cb_{X',X})$.
As one of his main results, Diestel showed that the map
\begin{align*}
\tau\mapsto (\,U(\tau,X)\mid X\in\cX\,)
\end{align*}
defines a bijection between the tangle space $\Theta$ and the inverse limit $\cU:=\invLim\cU_X$.
Moreover, he showed that the ends of $G$ are precisely those $\aleph_0$-tangles whose induced ultrafilters are all principal.

For every $\aleph_0$-tangle $\tau$ we write $\cX_\tau$ for the collection of all $X\in\cX$ for which the induced ultrafilter $U(\tau,X)$ is free.
Equivalently, $\cX_\tau$ is the collection of those $X\in\cX$ for which the star $\{\,s_{C\to X}\mid C\in \cC_X\,\}$ is included in $\tau$.
The set $\cX_\tau$ is empty if and only if $\tau$ is an end tangle. 
An $\aleph_0$-tangle $\tau$ with $\cX_\tau$ non-empty is called an \emph{ultrafilter tangle}, and we write $\Upsilon$ for the collection of all ultrafilter tangles, i.e. $\Upsilon=\Theta\setminus\Omega=\cU\setminus\Omega$.
For every ultrafilter tangle $\tau$ the set $\cX_\tau$ has a least element $X_\tau$ of which it is the up-closure.
Later, we will characterise these elements combinatorially as the \emph{critical vertex sets} (cf.~Theorem~\ref{critX=critUF}).

\begin{theorem}[{\cite[Theorem 2 and Lemma 3.1]{EndsAndTangles}}]
\label{UF:everyUFonCXextendsToUF}
Given $X\in\cX$, each free ultrafilter $U$ on $\cC_X$ determines an ultrafilter tangle $\tau$ of $G$ with $U(\tau,X)=U$.
\end{theorem}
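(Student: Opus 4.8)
The plan is to exploit Diestel's inverse-limit description of the tangle space. Since $\tau\mapsto(U(\tau,Y))_{Y\in\cX}$ is a bijection $\Theta\to\cU=\invLim\cU_X$, it suffices to produce a point $(W_Y)_{Y\in\cX}\in\cU$ whose coordinate at $X$ is the prescribed free ultrafilter, i.e.\ $W_X=U$; the associated $\aleph_0$-tangle $\tau$ then has $U(\tau,X)=U$, and it is an ultrafilter tangle because $X$ witnesses $\cX_\tau\neq\emptyset$. As the family $\{\,Y\in\cX\mid Y\supseteq X\,\}$ is cofinal in $\cX$ and directed, the inverse limit taken over it is canonically homeomorphic to $\cU$, compatibly with the projection to the $X$-coordinate, so I would work with this subfamily, on which $X$ is the least index.

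The heart of the matter is to show that \emph{for every finite $Y\supseteq X$ the bonding map $f_{Y,X}=\beta(\cb_{Y,X})\colon\cU_Y\to\cU_X$ has $U$ in its image}, and this is exactly where freeness enters. In general $\cb_{Y,X}\colon\cC_Y\to\cC_X$ is \emph{not} surjective: its image is $\cC_X\setminus\mathcal N$, where $\mathcal N$ is the finite set of those components of $G-X$ that lie entirely inside the finite set $Y\setminus X$. (No component of $G-Y$ lies inside such a component, since $G-Y$ avoids $Y$; conversely any component of $G-X$ with a vertex $v\notin Y$ contains the component of $G-Y$ at $v$.) Since $U$ is free, $\cC_X\setminus\mathcal N\in U$, so $\{\,\cb_{Y,X}^{-1}[\cC]\mid\cC\in U\,\}$ is a filter base on $\cC_Y$: it is closed under finite intersections, and each member is non-empty because $\cb_{Y,X}^{-1}[\cC]=\cb_{Y,X}^{-1}[\cC\cap(\cC_X\setminus\mathcal N)]$ and $\cb_{Y,X}$ maps onto $\cC_X\setminus\mathcal N$. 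Any ultrafilter $W\in\cU_Y$ extending this filter base then satisfies $W\rest X\supseteq U$, hence $f_{Y,X}(W)=W\rest X=U$.

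It remains to assemble these level-wise lifts into a single coherent family, and here the compactness of the Stone–Čech spaces does the real work (a coherent choice need not exist by purely combinatorial means). For each finite $Y\supseteq X$ let $F_Y\subseteq\prod_{Z\supseteq X}\cU_Z$ consist of all $(W_Z)_Z$ with $W_X=U$ and $f_{Z',Z}(W_{Z'})=W_Z$ for $X\subseteq Z\subseteq Z'\subseteq Y$; this is closed in the compact product, and non-empty by the previous paragraph (take $W_Y$ with $f_{Y,X}(W_Y)=U$, put $W_Z:=f_{Y,Z}(W_Y)$ for $X\subseteq Z\subseteq Y$ using compatibility of the bonding maps, and anything on the remaining indices). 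Since the index family is directed, $F_{Y_1\cup\dots\cup Y_n}\subseteq F_{Y_1}\cap\dots\cap F_{Y_n}$, so the $F_Y$ have the finite intersection property, and any point of $\bigcap_Y F_Y\neq\emptyset$ is the desired element of $\cU$. The main obstacle is thus conceptual rather than technical: one must notice that the maps $\cb_{Y,X}$ fail to be surjective only by finitely many ``small'' components, and that freeness of $U$ is exactly what puts $U$ in the image of every $f_{Y,X}$ — after that, the standard compactness machinery for inverse limits takes over. (A more hands-on alternative lifts $U$ to level $Y$ as a $U$-indexed sum of ultrafilters chosen on the fibres $\cb_{Y,X}^{-1}[\cC]$, which are non-empty for $U$-almost all $\cC$; but making these choices coherent over all $Y$ again amounts to finding a point of an inverse limit, so compactness cannot be avoided.)
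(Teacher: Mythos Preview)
Your proof is correct. Note, however, that the paper does not give its own proof of this statement: it is quoted verbatim from Diestel's \emph{Ends and Tangles} paper (Theorem~2 and Lemma~3.1 there), so there is no in-paper argument to compare against. Your approach---observing that $\cb_{Y,X}$ fails to be surjective only by the finitely many components of $G-X$ contained in $Y\setminus X$, so that freeness of $U$ places it in the image of every $f_{Y,X}$, and then invoking compactness of the $\cU_Y$ via the finite intersection property to assemble a coherent point of $\invLim_{Y\supseteq X}\cU_Y\cong\cU$---is the standard and natural one, and is essentially how Diestel's original argument proceeds.
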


We conclude our summary of `Ends and tangles' with the formal construction of the tangle \comp .
To obtain the tangle \comp\ $\modG$ of a graph $G$ we extend the 1-complex of $G$ to a topological space $G\sqcup\Theta=G\sqcup\cU$ by declaring as open in addition to the open sets of $G$, for all $X\in\cX$ and all $\cC\subseteq\cC_X$, the sets
\begin{align*}
\cO_{\modG}(X,\cC):=\medcup\cC\cup\mathring{E}(X,\medcup\cC)\cup\big\{\,(\,U_Y : Y\in\cX\,)\in\cU\;\big\vert\; \cC\in U_X\,\big\}
\end{align*}
and taking the topology this generates.
\begin{theorem}[{\cite[Theorem 1]{EndsAndTangles}}]
\label{DiestelsTangleCompWorks}
Let $G$ be any graph.
\begin{enumerate}
\item $\modG$ is a \comp\ of $G$ with totally disconnected remainder.
\item If $G$ is locally finite and connected, then $\modG=\VG$ coincides with the Freudenthal \comp\ of $G$.
\end{enumerate}
\end{theorem}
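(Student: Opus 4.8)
Since the Hausdorff axiom is no part of the notion of \comp\ used in Section~\ref{sec2}, for part~(i) I only need to exhibit a dense embedding of $G$ into the compact space $\modG$ whose remainder is totally disconnected. I would first note that the subspace topology $\modG$ induces on $G$ is the topology of the $1$-complex, since each $\cO_{\modG}(X,\cC)$ meets $G$ in the $1$-complex-open set $\medcup\cC\cup\mathring E(X,\medcup\cC)$ while the $1$-complex opens are open in $\modG$ by construction; in particular the identity on $G$ is an embedding. For density I would use that any finite intersection $\bigcap_l\cO_{\modG}(X_l,\cC_l)$ equals $\cO_{\modG}(Z,\bigcap_l\tilde\cC_l)$, where $Z=\bigcup_l X_l$ and $\tilde\cC_l\subseteq\cC_Z$ is the preimage of $\cC_l$ under the bonding map $\cb_{Z,X_l}$: thus every nonempty basic open set of $\modG$ is either a nonempty open subset of $G$ or of the form $\cO_{\modG}(Z,\cC)$ with $\cC\ne\emptyset$, and the latter contains the nonempty set $\medcup\cC\subseteq G$.

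The main work is the compactness of $\modG$, which I would establish by a covering argument. First note that $\cU=\invLim\cU_X$ is compact, being a closed subspace of the product of the compact spaces $\cU_X=\beta(\cC_X)$, and that the topology $\cU$ inherits from $\modG$ is the inverse-limit topology, because the sets $\cO_{\modG}(X,\cC)\cap\cU=\{(U_Y)\in\cU:\cC\in U_X\}$ form a subbasis for both. Now start with an open cover of $\modG$. Since every basic $\modG$-neighbourhood of a point $t\in\cU$ has the form $\cO_{\modG}(X,\cC)$ with $\cC\in U(t,X)$, I can cover the compact set $\cU$ by finitely many such sets $\cO_{\modG}(X_i,\cC_i)$, $i\le n$, each contained in a member of the cover; put $X^\ast=\bigcup_{i\le n}X_i$. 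The crucial claim is that all but finitely many components of $G-X^\ast$ lie inside some $\medcup\cC_i$, and that the exceptional ones are finite. Suppose some \emph{infinite} component $C$ of $G-X^\ast$ lay in no $\medcup\cC_i$. As $C$ is infinite it meets the complement of every finite $Y\supseteq X^\ast$, so the principal ultrafilter at $C$ lies in the image of $\beta(\cb_{Y,X^\ast})$ for every such $Y$; a standard compactness argument then produces a point $t\in\cU$ with $U(t,X^\ast)$ principal at $C$. But $t\in\cO_{\modG}(X_i,\cC_i)$ for some $i$, and computing $U(t,X_i)=U(t,X^\ast)\!\rest\!X_i$ shows that the component of $G-X_i$ containing $C$ belongs to $\cC_i$, whence $C\subseteq\medcup\cC_i$ — a contradiction. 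Likewise, if infinitely many \emph{finite} components of $G-X^\ast$ avoided every $\medcup\cC_i$, then by Theorem~\ref{UF:everyUFonCXextendsToUF} a free ultrafilter on this infinite set of components would extend to an ultrafilter tangle $t$ with $U(t,X^\ast)$ concentrated on it, and the same restriction computation again gives a contradiction. Granting the claim, the union $F_0$ of the exceptional components is a finite graph all of whose vertices have finite degree in $G$, so $H:=G[X^\ast\cup V(F_0)]$ is a finite subgraph; a short case distinction on the two endpoints of an edge then shows that the uncovered set $R:=\modG\setminus\bigcup_{i\le n}\cO_{\modG}(X_i,\cC_i)$ is contained in the compact point set of $H$. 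Being closed in $\modG$, $R$ is then compact, so finitely many more members of the cover cover $R$, and together with the $n$ members carrying the $\cO_{\modG}(X_i,\cC_i)$ they form a finite subcover. I expect that last case distinction — showing every edge that contributes a point to $R$ has both ends in $X^\ast\cup V(F_0)$, treating separately the edges incident with $X^\ast$ according to whether their other end lies in an exceptional or in a swallowed component — to be the most tedious part of a full write-up.

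It remains to deal with the remainder and with part~(ii). The remainder $\modG\setminus G=\cU$ carries the inverse-limit topology; each $\cU_X=\beta(\cC_X)$ is a Stone space (compact, Hausdorff and totally disconnected), and an inverse limit of such spaces is again totally disconnected, proving~(i). For~(ii), if $G$ is locally finite and connected then deleting any finite set $X$ removes only finitely many edges, so, $G$ being connected, $G-X$ has only finitely many components and $\cC_X$ is finite; hence $\cU_X=\cC_X$ and, by Theorem~\ref{EndsAreDirections}, $\cU=\invLim\cC_X=\Omega$, so $\Theta=\Omega$. Moreover $\{(U_Y)\in\cU:\cC\in U_X\}=\Omega(X,\cC)$ for all $X$ and $\cC$, so $\cO_{\modG}(X,\cC)=\cO_{\VG}(X,\cC)$ throughout and $\modG=\VG$ as topological spaces; combined with the fact, recalled in Section~\ref{sec2}, that $\VG$ is the Freudenthal \comp\ of a locally finite connected graph, this gives~(ii).
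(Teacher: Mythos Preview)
Your argument is essentially correct. Note, however, that the paper does not prove Theorem~\ref{DiestelsTangleCompWorks} itself: it is cited from Diestel~\cite{EndsAndTangles}, and the paper's own contribution is the generalisation in Theorem~\ref{Csystem:EveryCsystemInducesOmegaComp}, whose proof is the natural point of comparison.

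Two remarks. First, the equality $\bigcap_l\cO_{\modG}(X_l,\cC_l)=\cO_{\modG}(Z,\bigcap_l\tilde\cC_l)$ you assert in the density step is false in general: an edge with one end in $X_1\setminus X_2$ and the other in $X_2\setminus X_1$ can lie in the left-hand side but never in the right. Only the inclusion $\supseteq$ holds, and that is exactly what Lemma~\ref{Csystem:openNbhdsBehaveWell} and Lemma~\ref{Csystem:alphaGtopWellDef} establish; but this inclusion already suffices for density, so your conclusion survives.

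Second, your compactness proof differs from that of Theorem~\ref{Csystem:EveryCsystemInducesOmegaComp} in one structural respect. The paper first \emph{refines} each covering set $\cO_{\alpha G}(X,O_X)$ to $\cO_{\alpha G}(\Xi,\fraka_{\Xi,X}^{-1}(O_X))$ at the common level $\Xi=\bigcup\cX'$, and then a single inverse-limit argument over the closed fibres $A_Y$ shows that the union of the uncovered components of $G-\Xi$ is finite. You skip this refinement and instead split into two cases (a single infinite bad component versus infinitely many finite bad components), invoking Theorem~\ref{UF:everyUFonCXextendsToUF} for the latter. Both routes are valid; the paper's refinement buys a shorter, case-free argument and entirely sidesteps the ``tedious'' edge analysis you anticipate, since once all covering sets live at level $\Xi$ the uncovered part of $G$ is visibly contained in the finite induced subgraph on $\Xi\cup V[\cC]$. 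That said, your edge analysis does go through: if $e=xy$ with $y$ in a non-exceptional component $C\subseteq\medcup\cC_j$ and $x\in X^\ast\setminus X_j$, then $x$ and $y$ lie in the same component $\cb_{X^\ast,X_j}(C)\in\cC_j$ of $G-X_j$, whence $\mathring{e}\subseteq\cO_{\modG}(X_j,\cC_j)$ after all.
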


Teegen~\cite{MTeegen} generalised the tangle \comp\ to topological spaces.

\section{Coarsest compactifications extending the end space}
\label{sec3}

When Diestel asked for the coarsest \comp\ of a graph ``in which its ends appear as distinct points'', he kept things informal deliberately.
Choosing a way to make this precise is where we start:

\begin{definition}
If $G$ is any graph, then we call a \comp\ $\alpha G$ of $\VG$ an \emph{\Ocomp} (\emph{of} $G$) if $\alpha G\setminus\mathring{E}$ is a \HDcomp\ of $\VG\setminus\mathring{E}$.
\end{definition}
\begin{example}
Diestel's tangle \comp\ $\modG$ is an \Ocomp\ of $G$.
If $G$ is locally finite and connected, then its Freudenthal \comp\ coincides with $\VG=\modG$ and hence is an \Ocomp\ of $G$.
\end{example}

Every \Ocomp\ of $G$ is in particular a \comp\ of $G$.
Requiring an \Ocomp\ to be a \comp\ of $\VG$ ensures that it extends the end space as well as the graph and endows $G\sqcup\Omega$ with a meaningful topology.
Considering only \HDcomp s of $G$ is not an option since the tangle \comp\ is not \HD\ (however, its singleton subsets are closed in it).
As a result in an upcoming paper~\cite{StoneCechTangles} we show that the remainder of a \HDcomp\ of any non-locally finite $G$ cannot be totally disconnected.
Since the tangle space $\Theta$ is totally disconnected, this means that there is no way to extend the topology of $\Theta$ to one on $G\sqcup\Theta$ so as to yield a \HDcomp\ of $G$.
In this sense the topology of the tangle \comp\ is best possible.

However, working with non-\HD\ \comp s can be cumbersome, and since $\modG\setminus\mathring{E}$ is \HD\ one might think that requiring in the definition of an \Ocomp\ only the subspace $\alpha G\setminus\mathring{E}$ to be \HD\ would allow us to apply standard results about \HDcomp s also to \Ocomp s.
But this requirement does not suffice to ensure that $\alpha G\setminus\mathring{E}$ is a \HDcomp\ of $\VG\setminus\mathring{E}$: indeed, $\mathring{E}$ need not be open in $\alpha G$ (recall that the notion $\mathring{E}$ does not depend on any topology) so $\alpha G\setminus\mathring{E}$ need not be compact, and moreover $\VG\setminus\mathring{E}$ need not be dense in $\alpha G\setminus\mathring{E}$ (e.g. some point in $\alpha G\setminus\VG$ might have an open neighbourhood basis in $\alpha G$ of sets meeting $\VG$ only in $\mathring{E}$).
That is why we decided to require $\alpha G\setminus\mathring{E}$ to be a \HDcomp\ of $\VG\setminus\mathring{E}$.

\begin{convention}
Even though we speak of an \Ocomp\ `of $G$', we formally treat it as a \comp\ of $\VG$. For example, if $\delta G\ge\alpha G$ holds for another \Ocomp\ $\delta G$, then any $f\colon\delta G\ge\alpha G$ is required to fix $\Omega$ as well as $G$.
Likewise, a one-point \Ocomp\ $\alpha G$ of $G$ is one with $\vert\alpha G\setminus\VG\vert=1$.
\end{convention}

As our first main result, we give a combinatorial characterisation of the graphs admitting a one-point \Ocomp . This requires some preparation.

\begin{definition}
We call a graph $G$ \emph{\sol} if deleting a finite set of vertices leaves only finitely many components.
An end $\omega$ of $G$ living in a \sol\ component $C(X,\omega)$ for some $X\in\cX$ we call \emph{\sol}.
A graph with all ends \sol\ we call \emph{\esol}.
\end{definition}

Tough graphs are \esol .
Readers familiar with the notion of $t$-tough will note the similarity to our definition of tough.
However, a \sol\ graph need not be $t$-tough for any $t$, so we decided to leave it at that.
It is known (cf.~\cite[Theorem 4.1]{VTopComp}) that precisely the \sol\ graphs are compactified by their ends.
Our next lemma derives this result from the compactness of the tangle \comp :
\begin{lemma}\label{VGcompact}
Let $G$ be any graph. The following are equivalent:
\begin{enumerate}
\item $\VG$ is compact.
\item $\VG\setminus\mathring{E}$ is compact.
\item $G$ is \sol .
\item $\Omega=\Theta$.
\end{enumerate}
\end{lemma}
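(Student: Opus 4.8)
The plan is to prove the cycle of implications $(i)\Rightarrow(ii)\Rightarrow(iii)\Rightarrow(iv)\Rightarrow(i)$. For $(iv)\Rightarrow(i)$ I would first record the general observation that $\VG$ is canonically a subspace of $\modG$: identifying each end $\omega$ with its principal tuple $\big(U(\tau_\omega,Y)\big)_{Y\in\cX}\in\cU$, one checks straight from the two open-set definitions that for every $X\in\cX$ and every $\cC\subseteq\cC_X$ the set $\cO_{\modG}(X,\cC)$ meets $G\sqcup\Omega$ in exactly $\cO_{\VG}(X,\cC)$, because $\cC\in U(\tau_\omega,X)$ holds precisely when $C(X,\omega)\in\cC$, i.e.\ when $\omega\in\Omega(X,\cC)$. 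If $\Omega=\Theta$, equivalently $\cU=\Omega$, this subspace is all of $\modG$, which is compact by Theorem~\ref{DiestelsTangleCompWorks}; hence $\VG$ is compact. For $(i)\Rightarrow(ii)$ it suffices to note that every inner edge point has a basic neighbourhood contained in $\mathring E$, so $\mathring E$ is open in $\VG$, $\VG\setminus\mathring E$ is closed, and a closed subspace of a compact space is compact.

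For $(ii)\Rightarrow(iii)$ I argue by contraposition. If $G$ is not tough, fix $X\in\cX$ with $\cC_X$ infinite. For each $C\in\cC_X$ the set $W_C:=V(C)\cup\Omega(X,\{C\})$ is the trace on $\VG\setminus\mathring E$ of the open set $\cO_{\VG}(X,\{C\})$, hence open; and for each $v\in X$ the singleton $\{v\}$ is open in $\VG\setminus\mathring E$, being the trace of the basic neighbourhood $\bigcup_{e\in E(v)}[v,i_e)$ (which contains no ends). Together these sets cover $\VG\setminus\mathring E$, since a vertex outside $X$ lies in a unique component of $G-X$ and every end $\omega$ lies in $\Omega(X,\{C(X,\omega)\})$. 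No finite subfamily covers: omitting all but finitely many of the $W_C$ leaves infinitely many nonempty components $C$ whose vertices are then uncovered. Hence $\VG\setminus\mathring E$ is not compact.

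For $(iii)\Rightarrow(iv)$: if $G$ is tough then $\cC_X$ is finite for every $X\in\cX$, so $\beta(\cC_X)=\cC_X$ carries only principal ultrafilters; thus every $\aleph_0$-tangle induces only principal ultrafilters and is therefore an end, by Diestel's characterisation of ends among $\aleph_0$-tangles recalled in Section~\ref{sec2}. Hence $\Theta=\Omega$. (Alternatively one can derive $(iv)\Rightarrow(iii)$ directly: an infinite $\cC_X$ carries a free ultrafilter, which by Theorem~\ref{UF:everyUFonCXextendsToUF} extends to an ultrafilter tangle, contradicting $\Theta=\Omega$.)

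I expect the only points requiring a little care to be routine bookkeeping rather than genuine obstacles: checking that $\VG$ really sits inside $\modG$ as a subspace via the principal-tuple identification, and noticing that singletons at vertices are open in $\VG\setminus\mathring E$ so that the explicit non-compactness witness in $(ii)\Rightarrow(iii)$ is legitimate (both immediate once one observes that the only basic neighbourhoods of a vertex in $\VG$ are its $1$-complex neighbourhoods $\bigcup_{e\in E(v)}[v,i_e)$, which meet neither $\Omega$ nor any other vertex). The combinatorial content, that toughness is the same as every $\aleph_0$-tangle being an end, falls out of Diestel's inverse-limit description of $\Theta$ with essentially no extra work.
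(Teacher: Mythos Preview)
Your proposal is correct and follows essentially the same route as the paper: the same cycle $(iv)\Rightarrow(i)\Rightarrow(ii)\Rightarrow(iii)\Rightarrow(iv)$, the same identification of $\VG$ with $\modG$ when $\Omega=\Theta$ for $(iv)\Rightarrow(i)$, the same explicit open cover $\{\{x\}\mid x\in X\}\cup\{\cO_{\VG}(X,\{C\})\setminus\mathring E\mid C\in\cC_X\}$ for the contrapositive of $(ii)\Rightarrow(iii)$, and the same observation (phrased via principal ultrafilters) that toughness forces $\cX_\tau=\emptyset$ for $(iii)\Rightarrow(iv)$. You spell out a few points the paper leaves implicit---why $\VG$ sits in $\modG$ as a subspace, why $\mathring E$ is open, and why vertex singletons are open in $\VG\setminus\mathring E$---but these are exactly the routine checks the paper's one-line arguments rest on.
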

\begin{proof}
(iv)$\to$(i)$\to$(ii). If all $\aleph_0$-tangles are ends, then $\modG$ coincides with $\VG$.

(ii)$\to$(iii). 
If (iii) fails there is some $X\in\cX$ with $\cC_X$ infinite, and then 
\begin{align*}
\big\{\,\{x\}\mid x\in X\,\big\}\cup\big\{\,\cO_{\VG}(X,\{C\})\setminus\mathring{E}\;\big\vert\; C\in\cC_X\,\big\}
\end{align*}
is an open cover of $\VG\setminus\mathring{E}$ which admits no finite subcover.

(iii)$\to$(iv).
If $G$ is \sol\ then $\cX_\tau$ is empty for every $\aleph_0$-tangle $\tau$.
\end{proof}

\begin{figure}
\centering
\includegraphics[scale=0.8]{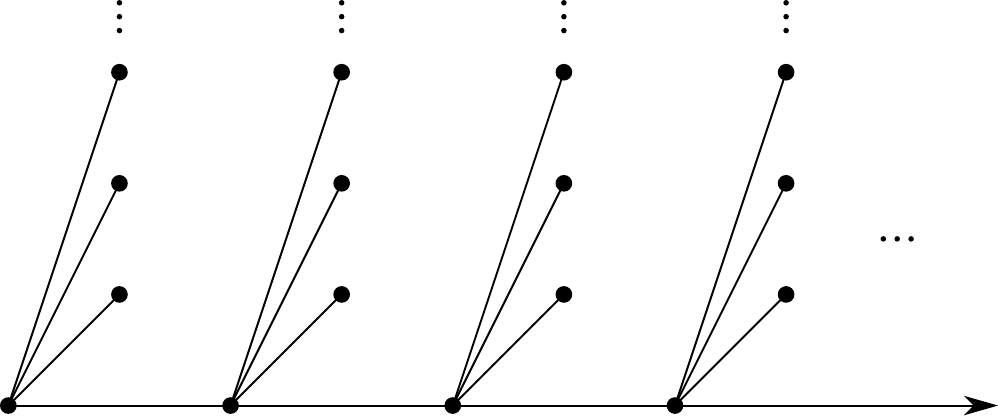}
\caption{This graph is neither \sol\ nor \esol\ and it has no \Ocomp\ with remainder of size at most one}
\label{fig:astRay}
\end{figure}

Equipped with this lemma, we are ready to investigate an example:

\begin{example}
Let $G$ be the \nsol\ graph depicted in Fig.~\ref{fig:astRay} and note that its sole end is not \sol .
Lemma~\ref{VGcompact} tells us that $\VG\setminus\mathring{E}$ is not compact.
Let us suppose that $G$ has a one-point \Ocomp\ $\alpha G$.
Then $\alpha G\setminus\mathring{E}$ is the one-point \HDcomp\ of $\VG\setminus\mathring{E}$, so $\VG\setminus\mathring{E}$ is locally compact and the sole end $\omega$ of $G$ has a compact neighbourhood $A$ in $\VG\setminus\mathring{E}$.
Consider some open neighbourhood $\hat{C}(X,\omega)\setminus\mathring{E}\subseteq A$ of $\omega$ in $\alpha G\setminus\mathring{E}$.
Then this open neighbourhood actually is a homeomorphic copy of the non-compact space $\VG\setminus\mathring{E}$, so we find a bad covering of it by open sets of $\VG\setminus\mathring{E}$.
Since $\omega$ is the only end of $G$ and $\{v\}$ is open in $\VG\setminus\mathring{E}$ for each vertex $v$ of $G$, we can extend the bad covering of that open neighbourhood to one of $A$ in $\VG\setminus\mathring{E}$ by adding singletons.
Therefore, $G$ is a graph that neither has a trivial \Ocomp\ nor a one-point \Ocomp .
\end{example}

Our next theorem states that the overall structure of this example is essentially the only obstruction to the existence of a one-point \Ocomp :

\begin{theorem}\label{1ptOcomp}
Let $G$ be any graph. The following are equivalent:
\begin{enumerate}
\item $G$ has a one-point $\Omega$-compactification.
\item $G$ is \esol\ but not \sol .
\end{enumerate}
In particular, every rayless graph has a one-point \Ocomp .
\end{theorem}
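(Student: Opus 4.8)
The plan is to establish the equivalence by analysing when $\VG\setminus\mathring{E}$, together with its end points, admits a one-point \HDcomp, and then to transfer this to a one-point \Ocomp\ of $G$ by pasting the inner edge points back in. The key bridge is the classical fact quoted in the excerpt: a \HD\ space has a one-point \HDcomp\ if and only if it is locally compact and not compact. So the real question is: \emph{when is $\VG\setminus\mathring{E}$ locally compact and non-compact?} By Lemma~\ref{VGcompact}, non-compactness of $\VG\setminus\mathring{E}$ is equivalent to $G$ not being \sol, which matches the ``not \sol'' half of (ii). It remains to show that, given $G$ is non-\sol, local compactness of $\VG\setminus\mathring{E}$ is equivalent to $G$ being \esol.

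First I would prove (ii)$\to$(i). Assume $G$ is \esol\ but not \sol. The only points of $\VG\setminus\mathring{E}$ that could fail to have a compact neighbourhood are the ends (vertices have their basic neighbourhoods $\{v\}$ after removing inner edge points, which are compact, while the graph part is locally finite in the relevant sense once edges are stripped). For an end $\omega$, \esoly\ gives some $X\in\cX$ with $C(X,\omega)$ \sol; then the component $C:=C(X,\omega)$ together with $X$ and the ends living in $C$ forms a subspace of $\VG\setminus\mathring{E}$ that is itself (a one-point-free version of) $\VG[C]\setminus\mathring{E}$ for the \sol\ graph $G[C\cup X]$, hence compact by Lemma~\ref{VGcompact}, and $\hat C(X,\omega)\setminus\mathring{E}$ is an open neighbourhood of $\omega$ inside it. So every point has a compact neighbourhood; $\VG\setminus\mathring{E}$ is locally compact, \HD, and non-compact, so it has a one-point \HDcomp, which I then promote to a one-point \Ocomp\ of $G$ by re-adjoining $\mathring{E}$ with the obvious topology (this is a routine check: the edge interiors are open, their closures compact, and density and the \comp\ axioms persist).

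For the converse (i)$\to$(ii), suppose $G$ has a one-point \Ocomp\ $\alpha G$. By definition $\alpha G\setminus\mathring{E}$ is then a one-point \HDcomp\ of $\VG\setminus\mathring{E}$, forcing $\VG\setminus\mathring{E}$ to be locally compact and non-compact; non-compactness plus Lemma~\ref{VGcompact} gives ``not \sol''. For \esoly, suppose some end $\omega$ is non-\sol, i.e.\ every $C(X,\omega)$ is itself non-\sol. Then I run the argument from the preceding Example: pick a compact neighbourhood $A$ of $\omega$ in $\VG\setminus\mathring{E}$, shrink it to some basic $\hat C(X,\omega)\setminus\mathring{E}\subseteq A$, observe that this basic neighbourhood is homeomorphic to $\VG[C(X,\omega)\cup X]\setminus\mathring{E}$ (modulo the finitely many vertices of $X$) for a still-non-\sol\ graph, hence non-compact, exhibit a cover of it by relatively open sets with no finite subcover, and extend this cover to one of $A$ by throwing in the open singletons $\{v\}$ for the vertices not yet covered — contradicting compactness of $A$. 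Hence every end is \sol, i.e.\ $G$ is \esol. The final ``in particular'' follows since a rayless graph has no ends, so it is vacuously \esol, and it is non-\sol\ precisely when some $\cC_X$ is infinite; if every $\cC_X$ is finite the graph is already compactified by $\VG=\VG$ and (after noting it is then compact) has no one-point \Ocomp, but the statement as phrased only claims \emph{existence} of a one-point \Ocomp\ — so I would instead remark that a rayless non-\sol\ graph falls under (ii), and a rayless \sol\ graph is finite-like in that $\VG\setminus\mathring E$ is already compact, so one should read the ``in particular'' as applying to the non-\sol\ rayless case; I will phrase this carefully to match (ii).

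The main obstacle I expect is the gluing step identifying a basic neighbourhood $\hat C(X,\omega)\setminus\mathring E$ with $\VG$ of the induced subgraph on $C(X,\omega)\cup X$ minus its edge interiors: one must check that the ends of $G$ living in $C(X,\omega)$ correspond bijectively and homeomorphically to the ends of $G[C(X,\omega)\cup X]$, that the separating finite vertex sets match up, and that the trace of the $\cO_{\VG}$-topology agrees with the intrinsic one — none of this is deep, but it is the one place where a clean lemma (or a forward reference) is needed to keep the argument honest, and it is reused in both directions of the proof.
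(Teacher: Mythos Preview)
Your argument for the equivalence (i)$\leftrightarrow$(ii) is correct and follows the same underlying idea as the paper: both directions reduce to analysing when $\VG\setminus\mathring{E}$ is locally compact but not compact. The paper packages (ii)$\to$(i) differently, writing down the one-point \Ocomp\ $\alpha G=\VG\sqcup\{\ast\}$ explicitly (with basic neighbourhoods $\cO^\ast(X,\cC)$ of $\ast$ for each cofinite $\cC\subseteq\cC_X$ containing all \nsol\ components) and verifying compactness directly, rather than invoking the abstract one-point \HDcomp\ and re-adjoining $\mathring{E}$; this explicit description is reused in Theorem~\ref{thm_Q1}~(ii). For (i)$\to$(ii) the paper simply writes down the bad cover $\{\{x\}:x\in\Xi\}\cup\{\cO_{\VG}(\Xi,\{C\})\setminus\mathring{E}:C\in\cC_\Xi\}$ for $\Xi=X\cup X'$ with $X'$ witnessing the \nsoly\ of $C(X,\omega)$, which sidesteps your identification of $\hat C(X,\omega)\setminus\mathring E$ with the end-space of an induced subgraph---so the ``main obstacle'' you flag is largely an artefact of your packaging rather than an intrinsic difficulty.

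There is, however, a genuine gap in your handling of the ``in particular'' clause. You hedge by proposing to read the statement as applying only to the \nsol\ rayless case, implicitly allowing for infinite rayless \sol\ graphs. But no such graph exists: if $G$ is infinite and \sol, then every $\cC_X$ is finite and non-empty, so $\Omega=\invLim\cC_X$ is a non-empty inverse limit of non-empty finite sets, giving $G$ an end and hence a ray. This inverse-limit argument is exactly what the paper uses, and it is what makes the ``in particular'' hold without any qualification (under the paper's standing assumption that $G$ is infinite). Without it, your treatment of the final clause is incomplete.
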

\begin{proof}
We start with the `in particular' part, assuming (ii)$\to$(i). 
Since $G$ has no end, it is \esol .
Hence by (ii)$\to$(i) it suffices to show that $G$ is not \sol .
And $G$ is indeed not \sol , since otherwise $\Omega=\invLim{}\cC_X$ is non-empty as inverse limit of non-empty finite spaces, yielding a ray in $G$ contrary to our assumptions.

(i)$\to$(ii). If $\alpha G$ is a one-point \Ocomp\ of $G$, then $\alpha G\setminus\mathring{E}$ is the one-point \HD\ \comp\ of $\VG\setminus\mathring{E}$, so $\VG\setminus\mathring{E}$ is locally compact but not compact. Hence $G$ is not \sol\ by Lemma~\ref{VGcompact} and it remains to verify \esoly .

For this, we assume for a contradiction that some end $\omega$ of $G$ is not \sol .
Since $\VG\setminus\mathring{E}$ is locally compact, we find a compact neighbourhood $A$ of $\omega$ in $\VG\setminus\mathring{E}$.
Next, we pick an open neighbourhood $\hat{C}(X,\omega)\setminus\mathring{E}\subseteq A$ of $\omega$ in $\VG\setminus\mathring{E}$.
Let $X'\in\cX$ witness the \nsoly\ of $C(X,\omega)$ and put $\Xi=X\cup X'$.
Then
\begin{align*}
\big\{\,\{x\}\mid x\in \Xi\,\big\}\cup\big\{\,\cO_{\VG}(\Xi,\{C\})\setminus\mathring{E}\;\big\vert\; C\in\cC_{\Xi}\,\big\}
\end{align*}
is a cover of $\VG\setminus\mathring{E}$ by open sets which admits no finite subcover of $A$, contradicting the compactness of $A$.

(ii)$\to$(i). 
We extend $\VG$ to a topological space $\alpha G=\VG\sqcup\{\ast\}$ by declaring as open in addition to the open sets of $\VG$, for every $X\in\cX$ and each cofinite subset $\cC$ of $\cC_X$ that contains all the \nsol\ components, the sets
\begin{align*}
\cO^\ast (X,\cC):=\medcup\cC\cup\mathring{E}(X,\medcup\cC)\cup\Omega(X,\cC)\cup\{\ast\}
\end{align*}
and taking the topology on $\alpha G$ this generates.
To see that this really generates a topology, it suffices to show that for every two neighbourhoods $\cO^\ast (X,\cC)$ and $\cO^\ast (Y,\cD)$ of $\ast$ there is some such neighbourhood of $\ast$ included in their intersection.
Since $\cC_X\setminus\cC$ is a finite set of \sol\ components of $G-X$, the set $\cC':=\cb_{X\cup Y,X}^{-1}(\cC)$ is a cofinite subset of $\cC_{X\cup Y}$ containing all the \nsol\ components of $G-(X\cup Y)$.
Similarly, $\cD':=\cb_{X\cup Y,Y}^{-1}(\cD)$ is cofinite in $\cC_{X\cup Y}$ and contains all the \nsol\ components of $G-(X\cup Y)$, yielding
\begin{align*}
\ast\in\cO^\ast(X\cup Y,\cC'\cap\cD')\subseteq\cO^\ast(X,\cC)\cap\cO^\ast(Y,\cD).
\end{align*}

Next, we verify that $\VG$ is an \Ocomp\ of $G$.
Since $G$ is \nsol , all basic open neighbourhoods of $\ast$ meet the vertex set:
Indeed, consider any basic open $\cO^\ast(X,\cC)$. If $\cC_X$ is infinite, then so is $\cC$. Otherwise $\cC_X$ is finite; then some component of $G-X$ is \nsol , since otherwise $G$ itself is \sol\ contrary to our assumptions. In both cases $\cC$ is non-empty, so $\cO^\ast(X,\cC)$ meets $V$ as claimed.
Consequently, $\VG$ is dense in $\alpha G$ and $\VG\setminus\mathring{E}$ is dense in $\alpha G\setminus\mathring{E}$.
To see that $\alpha G\setminus\mathring{E}$ is Hausdorff it suffices to find disjoint open neighbourhoods of an arbitrary end of $G$ and $\ast$ in $\alpha G$.
Given any end $\omega$ of $G$ we pick an $X\in\cX$ such that $C(X,\omega)$ is \sol . 
Then $\hat{C}(X,\omega)$ and $\cO^\ast(X,\cC_X\setminus \{C(X,\omega)\})$ are disjoint open neighbourhoods of $\omega$ and $\ast$ in $\alpha G$, respectively, as desired.

It remains to show that $\alpha G$ is compact.
For this, let $O=\cO^\ast(X,\cC)$ be any basic open neighbourhood of $\ast$ .
It suffices to show that $\alpha G\setminus O$ is compact.
Write $H$ for the subgraph $G-\bigcup\cC$.
Clearly, $\vert H\vert_\Omega$ is homeomorphic to $\alpha G\setminus O$, so it suffices to show that $\vert H\vert_\Omega$ is compact.
Since $\cC_X\setminus\cC$ is a finite set of \sol\ components of $G-X$, the graph $H$ is \sol , and hence $\vert H\vert_\Omega$ is compact by Lemma~\ref{VGcompact}.
\end{proof}

It is well-known that every continuous surjection $f\colon X \twoheadrightarrow Y$ from a compact space $X$ onto a Hausdorff space $Y$ gives rise to a homeomorphism between $Y$ and the quotient $X  / \{\,f^{-1}(y)\mid y \in Y\,\}$ over the fibres of $f$.
Thus, each \HDcomp\ is a quotient of all finer ones.
Since \Ocomp s may be non-\HD , proving a similar result for them takes some effort even if we consider only ones whose topology comes with a nice basis:

\begin{definition}
\label{def:crude}
In the context of a given graph $G$ we call a set $M$ \emph{crude} if it satisfies $M\cap\mathring{E}=\bigcup_{v\in M\cap V}\mathring{E}(v)$.
If a topological space $X\supseteq G$ has a basis consisting of the basic open sets of $G$ and sets that are crude,
then we call both the basis and the space $X$ \emph{crude}.
\end{definition}

\begin{example}
Both $\VG$ and $\modG$ are crude, and so is the one-point \Ocomp\ constructed in the proof of Theorem~\ref{1ptOcomp}.
\end{example}

\begin{lemma}\label{MagicLemma}
If $X\supseteq G$ is a crude topological space, then every open set $O$ of $X\setminus\mathring{E}$ extends to an open set $O\cup\bigcup_{v\in O\cap V}\mathring{E}(v)$ of $X$.
\end{lemma}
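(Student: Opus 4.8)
The plan is to realise $O^{+}:=O\cup\bigcup_{v\in O\cap V}\mathring{E}(v)$ as a union of subsets that are open in $X$. Two preliminary observations make this manageable. First, since the crude basis of $X$ contains the basic open sets of $G$, every $\mathring{e}$ is open in $X$, being the union of the basic open neighbourhoods of its inner edge points; hence $\bigcup_{v\in O\cap V}\mathring{E}(v)=\bigcup\{\,\mathring{e}\ :\ e\text{ has an endpoint in }O\cap V\,\}$ is open in $X$. Second, $O$ is open in the subspace $X\setminus\mathring{E}$, so we may fix an open set $U$ of $X$ with $O=U\cap(X\setminus\mathring{E})$. It therefore suffices to find, for every $p\in O$, a crude-basis element $B_{p}$ with $p\in B_{p}\subseteq U$ and $B_{p}\subseteq O^{+}$; then $O^{+}=\bigl(\bigcup_{v\in O\cap V}\mathring{E}(v)\bigr)\cup\bigcup_{p\in O}B_{p}$ displays $O^{+}$ as open.

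So I would fix $p\in O\subseteq U$, pick a basis element $B$ with $p\in B\subseteq U$, and show $B\subseteq O^{+}$ by splitting $B=(B\setminus\mathring{E})\cup(B\cap\mathring{E})$. The first part is immediate: $B\setminus\mathring{E}=B\cap(X\setminus\mathring{E})\subseteq U\cap(X\setminus\mathring{E})=O\subseteq O^{+}$. For $B\cap\mathring{E}$ I would distinguish the possible shapes of a crude-basis element. If $B$ is itself crude, then $B\cap\mathring{E}=\bigcup_{w\in B\cap V}\mathring{E}(w)$, and since $B\cap V\subseteq B\setminus\mathring{E}\subseteq O$ this lies in $\bigcup_{v\in O\cap V}\mathring{E}(v)\subseteq O^{+}$. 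If $B$ is a basic open set of $G$ that is contained in some $\mathring{e}$, then $p\in B\subseteq\mathring{E}$ contradicts $p\in O\subseteq X\setminus\mathring{E}$, so this case does not arise. Finally, if $B=\bigcup_{e\in E(v)}[v,i_{e})$ is a basic neighbourhood of a vertex $v$, then $p\notin\mathring{E}$ forces $p=v\in O\cap V$, whence $B\cap\mathring{E}\subseteq\bigcup_{e\in E(v)}\mathring{e}=\mathring{E}(v)\subseteq O^{+}$. In each case $B\subseteq O^{+}$, as needed.

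The only genuine subtlety I anticipate — and the single point to handle with care — is that $O$ is open merely in $X\setminus\mathring{E}$, and $\mathring{E}$ need be neither closed nor open in $X$, so no complement- or closure-trick is available: one has to argue directly that a basis element through a point of $O$ cannot meet an inner edge $\mathring{e}$ unless an endpoint of $e$ lies in $O\cap V$, which is exactly what the crudeness of the basis secures. The remaining ingredients — openness of each $\mathring{e}$ in $X$, the rewriting $O=U\cap(X\setminus\mathring{E})$, and the short enumeration of the shapes a crude-basis element can take — are routine.
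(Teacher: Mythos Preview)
Your proof is correct and follows essentially the same approach as the paper: both pass to an open set of $X$ inducing $O$ on $X\setminus\mathring{E}$ and then run the identical case analysis on the shape of a basis element (crude set, vertex-neighbourhood, or subset of some $\mathring{e}$) to see that any basis element through a point of $O$ already lies in $O^{+}$. The only cosmetic difference is that the paper enlarges its witnessing open set to contain all the $\mathring{E}(v)$ up front and then \emph{discards} the offending inner-edge basis elements, whereas you \emph{build up} $O^{+}$ from the good basis elements and add the $\mathring{E}(v)$ at the end.
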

\begin{proof}
We let $\cB$ be a crude basis for the topology of $X$ and note that $G$ is open in $X$. 
Since $O$ is open in $X\setminus\mathring{E}$ there is some open set $W$ of $X$ with $W\setminus\mathring{E}=O$ and $W\supseteq \bigcup_{v\in O\cap V}\mathring{E}(v)$.
Next, we choose a subcollection $\cO\subseteq\cB$ of basic open sets with $W=\bigcup\cO$.
If $\hat{O}:=O\cup\bigcup_{v\in O\cap V}\mathring{E}(v)$ is a proper subset of $W$, then there is some $B\in\cO$ with $B\not\subseteq\hat{O}$.
This $B$ cannot be crude, and hence must be basic open in $G$.
Since $B$ cannot be a basic open neighbourhood of a vertex of $G$, there is an edge $e$ of $G$ with $B\subseteq\mathring{e}$ and no endvertex of this edge may lie in $O$.
In particular, $B$ avoids $\hat{O}$.
Therefore, discarding every such $B$ from $\cO$ results in $\hat{O}=\bigcup\cO$.
\end{proof}

\begin{proposition}\label{MagicProp}
Let $G$ be any graph, and let $\alpha G$ and $\delta G$ be two crude \Ocomp s of $G$ with $f\colon\delta G\ge\alpha G$.
Then $\alpha G$ and $\delta G / \{\,f^{-1}(\xi)\mid\xi\in\alpha G\,\}$ are topologically equivalent.
\end{proposition}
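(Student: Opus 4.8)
\emph{Plan of proof.}
The plan is to prove that $f$ is a quotient map; granting this, the induced continuous bijection $\bar f\colon Q:=\delta G/\{f^{-1}(\xi)\mid\xi\in\alpha G\}\to\alpha G$ with $\bar f\circ q=f$ (for $q$ the quotient map) is a homeomorphism, and it commutes with the embeddings of $\VG$ because $f$ fixes $\VG$ (it fixes $G$ and $\Omega$); this exhibits the desired topological equivalence.

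I would begin by separating the \HD\ part of the spaces from the inner edge points. The crucial preliminary observation is that $f^{-1}(\mathring E)=\mathring E$: were there an $x\in f^{-1}(\mathring e)\setminus\mathring e$, then $x\notin\VG$ since $f$ fixes $\VG$, so $x$ would have a basic open neighbourhood $M\subseteq f^{-1}(\mathring e)$ that is a crude set; as $f$ fixes $\VG$ this forces $M\cap\VG\subseteq\mathring e$, hence $M\cap V=\emptyset$, hence $M\cap\mathring E=\emptyset$ by crudeness, hence $M\cap G=\emptyset$, contradicting the density of $G$ in $\delta G$ (as $\VG$ is dense in $\delta G$ and $G$ is dense in $\VG$). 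In particular every fibre over a point of $\mathring E$ is a singleton, and $f$ restricts to a continuous surjection $f'\colon\delta G\setminus\mathring E\to\alpha G\setminus\mathring E$ of compact \HD\ spaces (here $\delta G\setminus\mathring E$ is closed in $\delta G$ since $\mathring E$ is open, and surjectivity follows because $\VG\setminus\mathring E$ is dense in $\alpha G\setminus\mathring E$). Such a map is closed, hence a quotient map, and it carries full $f$-fibres to full $f$-fibres.

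To see that $f$ is a quotient map, let $W:=f^{-1}(U)$ be open in $\delta G$; then $f(W)=U$ and $W$ is $f$-saturated, and I would show every $\xi\in f(W)$ has an open $\alpha G$-neighbourhood contained in $f(W)$, distinguishing three cases. If $\xi\in\mathring E$, then $f^{-1}(\xi)=\{\xi\}\subseteq W$, and for $\xi\in\mathring e$ the set $W\cap\mathring e$ is open in $\alpha G$ (as $\mathring e$ is) and lies in $f(W)$ (as $f$ fixes $\mathring e$). If $\xi$ is a vertex, then $\xi\in W$ by saturation, and a basic neighbourhood of $\xi$ inside $W$ contains a basic open set $N$ of $G$ at $\xi$ (directly, or because a crude neighbourhood of $\xi$ contains all of $\mathring E(\xi)$); then $N\subseteq G$ is open in $\alpha G$ with $f(N)=N\subseteq f(W)$. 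The substantial case is $\xi\in\alpha G\setminus G$: here $f^{-1}(\xi)$ is a compact subset of $W\cap(\delta G\setminus\mathring E)$ (it equals $f'^{-1}(\xi)$, closed because $\alpha G\setminus\mathring E$ is \HD), so covering it by crude basic open sets inside $W$ and passing to a finite union gives a \emph{crude} open set $M$ with $f^{-1}(\xi)\subseteq M\subseteq W$. Since $f'$ is closed, $V':=(\alpha G\setminus\mathring E)\setminus f'\big((\delta G\setminus\mathring E)\setminus(M\setminus\mathring E)\big)$ is open in $\alpha G\setminus\mathring E$, contains $\xi$, and satisfies $f'^{-1}(V')\subseteq M\setminus\mathring E$; by Lemma~\ref{MagicLemma} its thickening $\hat V':=V'\cup\bigcup_{w\in V'\cap V}\mathring E(w)$ is open in $\alpha G$ and contains $\xi$. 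Finally $\hat V'\subseteq f(W)$: indeed $V'=f'(f'^{-1}(V'))\subseteq f(M)\subseteq f(W)$, and for each $w\in V'\cap V$ one has $w\in f'^{-1}(V')\subseteq M$, so $\mathring E(w)\subseteq M\subseteq W$ by crudeness of $M$, whence $\mathring E(w)=f(\mathring E(w))\subseteq f(W)$.

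I expect the third case to be the main obstacle, and the reason it needs care is that open sets of a crude space can be thinner than crude sets: an open set may contain a vertex $v$ together with only initial segments of the edges at $v$ rather than all of $\mathring E(v)$, so one cannot simply thicken an arbitrary saturated open set along $\mathring E$ and hope to stay inside it. The device that gets around this is to exploit compactness of the fibre $f^{-1}(\xi)$ over a remainder point to replace $W$ locally by a genuinely crude open set $M$, for which crudeness guarantees $\mathring E(w)\subseteq M$ for every vertex $w\in M$; Lemma~\ref{MagicLemma} then supplies the required open set of $\alpha G$, and the computation above confines it to $f(W)$.
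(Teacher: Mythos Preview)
Your proof is correct and follows essentially the same route as the paper's: both arguments exploit that the restriction $f'\colon\delta G\setminus\mathring E\to\alpha G\setminus\mathring E$ is a map between compact \HD\ spaces (so closed, and the induced bijection on the quotient is a homeomorphism there), and both use crudeness together with Lemma~\ref{MagicLemma} to control the inner edge points. The only difference is organisational: the paper passes to the quotient first, observes that the induced bijection $F$ restricts to a homeomorphism $\check F$ off $\mathring E$, and then shows $F$ is open by reducing to crude saturated opens $O$ and verifying directly that $F[O]$ equals the Lemma~\ref{MagicLemma} thickening of $\check F[O\setminus\mathring E]$; you instead show $f$ itself is a quotient map by a point-by-point case analysis, and for remainder points you localise to a crude open set $M$ around the compact fibre rather than assuming crudeness of the whole saturated open set. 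Both arrive at the same place by the same mechanism.
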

\begin{proof}
Let ${\sim}_f$ be the equivalence relation on the remainder of $\delta G$ induced by $f$, namely the one with $\delta G/{\sim}_f=\delta G / \{\,f^{-1}(\xi)\mid\xi\in\alpha G\,\}$.
The map $f$ also gives rise to a continuous bijection $F\colon \delta G/{\sim}_f\to\alpha G$.
Since $(\delta G/{\sim}_f)\setminus\mathring{E}$ is compact and $\alpha G\setminus\mathring{E}$ is \HD , general topology yields that the restriction $\check{F}$ of $F$ to $(\delta G/{\sim}_f)\setminus\mathring{E}$ is a homeomorphism onto $\alpha G\setminus\mathring{E}$.
We use this to show that $F$ is open.

For this, let any open set $O$ of $\delta G/{\sim}_f$ be given, and let $\cB$ be a crude basis for the topology of $\delta G$.
Since $\bigcup O$ is open in $\delta G$, we find a subcollection $\cO\subseteq\cB$ with $\bigcup O=\bigcup\cO$.
Since ${\sim}_f$ only affects the remainder of $\delta G$ and $F$ is the identity on $G$ we may assume without loss of generality that every set in $\cO$ is crude. 
Then $\bigcup O$ is crude as union of crude sets, so $O$ is crude as well.
By Lemma~\ref{MagicLemma} the open set $\check{W}:=\check{F}[O\setminus\mathring{E}]$ of $\alpha G\setminus\mathring{E}$ extends to the open set $W:=\check{W}\cup\bigcup_{v\in\check{W}\cap V}\mathring{E}(v)$ of $\alpha G$.
Clearly, $W$ and $F[O]$ agree on $\alpha G\setminus\mathring{E}$.
The fact that $O$ is crude implies that $W$ and $F[O]$ agree also on $\mathring{E}$.
\end{proof}

The following lemma is folklore; we include it for the sake of convenience:
\begin{lemma}\label{HDcompQuotient}
Let $X$ be a topological space with a \HDcomp\ $\alpha X$, and let ${\approx}$ be a non-trivial equivalence relation on the remainder such that $\alpha X/{\approx}$ is again a \HDcomp\ of $X$.
Then $\alpha X /{\approx}\lneq \alpha X$.
\end{lemma}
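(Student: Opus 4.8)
The plan is to take the canonical quotient map $q\colon\alpha X\to\alpha X/{\approx}$ as a witness of one inequality, and then to exclude topological equivalence by a uniqueness-of-witness argument. First I would note that, since ${\approx}$ only identifies points of the remainder and is the identity on the copy of $X$, the map $q$ restricts to the identity on $X$, hence commutes with the two embeddings of $X$ into $\alpha X$ and into $\alpha X/{\approx}$. As $q$ is continuous and surjective, this gives $q\colon\alpha X\ge\alpha X/{\approx}$, so $\alpha X/{\approx}\le\alpha X$.

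It then remains to rule out that these two \HDcomp s are topologically equivalent; together with the inequality just established, this is exactly what $\alpha X/{\approx}\lneq\alpha X$ asserts. Suppose they were equivalent. After possibly passing to the inverse homeomorphism, we may assume the witnessing homeomorphism has the form $f\colon\alpha X\to\alpha X/{\approx}$ and fixes $X$; then $f$ is in particular a continuous map witnessing $\alpha X/{\approx}\le\alpha X$. But $\alpha X/{\approx}$ is Hausdorff by hypothesis, so any such witness is uniquely determined by its restriction to the dense subset $X$ of $\alpha X$ (as recalled just before Lemma~\ref{Top:compactification:witnessBehaviour}). Since $q$ is also such a witness, $f=q$, so $q$ would be a homeomorphism and in particular injective. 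This contradicts the non-triviality of ${\approx}$: there are distinct points $p,p'$ in the remainder of $\alpha X$ with $p\approx p'$, and these satisfy $q(p)=q(p')$.

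I do not expect any genuine obstacle. The only points that need care are, first, to read ``$\lneq$'' as ``coarser but not topologically equivalent'', and second, to observe that the Hausdorffness of the quotient --- which is built into the hypothesis that $\alpha X/{\approx}$ is a \HDcomp\ of $X$ --- is precisely what makes the uniqueness shortcut available, so that no map needs to be constructed by hand. One could alternatively deduce non-equivalence from Lemma~\ref{Top:compactification:witnessBehaviour}(ii) by showing $\alpha X\not\le\alpha X/{\approx}$ directly, but that detour is unnecessary.
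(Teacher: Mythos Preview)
Your argument is correct and is essentially the paper's own proof, just written out in more detail: both establish $\alpha X/{\approx}\le\alpha X$ via the quotient map $q$, and both rule out equivalence by observing that uniqueness of witnesses into a \HD\ target forces any homeomorphism $\alpha X\to\alpha X/{\approx}$ fixing $X$ to equal the non-injective $q$. The paper compresses your second paragraph into a single sentence invoking Lemma~\ref{Top:compactification:witnessBehaviour}, but the underlying idea is identical.
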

\begin{proof}
The non-injective quotient map is the unique witness of $\alpha X\ge\alpha X/{\approx}$, thus $\alpha X\le\alpha X/{\approx}$ would force it to be a homeomorphism (cf.~Lemma~\ref{Top:compactification:witnessBehaviour}).
\end{proof}
As a consequence of this lemma, every graph with a vertex of infinite degree has no coarsest \HDcomp , but we will not use this observation.
We are now ready to answer Diestel's questions:

\begin{theorem}
\label{thm_Q1}
Let $G$ be any graph. 
The following cases can occur:
\begin{enumerate}
\item 
If $G$ is \sol , then every \Ocomp\ of $G$ coincides with $\VG$.
\item If $G$ is not \sol\ but \esol , then $G$ has a crude one-point \Ocomp\ which is the coarsest crude \Ocomp ; which is a quotient of every crude \Ocomp ; and which is not equivalent to $\modG$.
\item Lastly, if $G$ is neither \sol\ nor \esol , then $G$ has no \Ocomp\ with remainder of size at most one. Moreover, $G$ has no coarsest \Ocomp , not even if we consider only crude \Ocomp s.
\end{enumerate}
In particular, $\modG$ is the coarsest \Ocomp\ of $G$ if and only if $G$ is \sol .
\end{theorem}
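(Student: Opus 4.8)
The plan is to treat the three cases separately, drawing on the machinery already set up. For (i), suppose $G$ is \sol . By Lemma~\ref{VGcompact}, $\VG$ is already compact, and $\VG\setminus\mathring{E}$ is compact and \HD . Hence if $\alpha G$ is any \Ocomp\ of $G$, then $\alpha G\setminus\mathring{E}$ is a \HDcomp\ of the already-compact space $\VG\setminus\mathring{E}$, which forces $\alpha G\setminus\mathring{E}=\VG\setminus\mathring{E}$ (a compact subset is closed in a \HD\ ambient space, and it must be dense, hence everything); and since $\mathring{E}$ carries its intrinsic topology and the endvertices are already present, $\alpha G=\VG$. So the only \Ocomp\ is the trivial one.

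For (ii), assume $G$ is \esol\ but not \sol . Theorem~\ref{1ptOcomp} hands us a one-point \Ocomp\ $\alpha G$, and the \emph{Example} following Definition~\ref{def:crude} records that the $\alpha G$ built in that proof is crude. To see $\alpha G$ is the coarsest crude \Ocomp : let $\delta G$ be any crude \Ocomp . Then $\delta G\setminus\mathring{E}$ is a \HDcomp\ of $\VG\setminus\mathring{E}$, and $\alpha G\setminus\mathring{E}$ is the one-point \HDcomp\ of the (by Lemma~\ref{VGcompact}) locally compact, non-compact space $\VG\setminus\mathring{E}$; by the universal property of the one-point compactification there is a (unique) continuous $\check f\colon\delta G\setminus\mathring{E}\to\alpha G\setminus\mathring{E}$ fixing $\VG\setminus\mathring{E}$. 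Extend $\check f$ over $\mathring{E}$ by the identity to get $f\colon\delta G\to\alpha G$; crudeness of both spaces together with Lemma~\ref{MagicLemma} shows $f$ is continuous, witnessing $\alpha G\le\delta G$. That $\alpha G$ is a quotient of every crude \Ocomp\ is then Proposition~\ref{MagicProp} applied to this $f$. Finally $\alpha G$ is not equivalent to $\modG$: since $G$ is not \sol , Lemma~\ref{VGcompact} gives $\Upsilon=\Theta\setminus\Omega\neq\varnothing$, so $\modG$ has remainder strictly larger than a point (in fact, by Theorem~\ref{UF:everyUFonCXextendsToUF}, infinitely larger), hence $\modG\not\le\alpha G$ by Lemma~\ref{Top:compactification:witnessBehaviour}.

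For (iii), assume $G$ is neither \sol\ nor \esol . The \emph{Example} preceding the theorem already shows $G$ has no \Ocomp\ with remainder of size at most one (the argument there uses only that some end is \nsol\ and that $\VG\setminus\mathring{E}$ is non-compact). It remains to show $G$ has no coarsest crude \Ocomp . Fix a \nsol\ end $\omega$. The idea is that any putative coarsest crude \Ocomp\ $\alpha G$ would, after passing to $\alpha G\setminus\mathring{E}$, be a coarsest \HDcomp\ of $\VG\setminus\mathring{E}$; but one can always collapse a bit more of the boundary near $\omega$. Concretely, I would produce for each $X\in\cX$ a crude \Ocomp\ $\alpha_X G$ in which $\omega$ and the point $\ast$ ``at the \nsol\ part'' are separated by a neighbourhood of the form $\hat C(X,\omega)$, and argue that a common coarsening of all $\alpha_X G$ would have to fail to separate $\omega$ from the rest — contradicting that it is \HD\ off $\mathring{E}$; alternatively, invoke Lemma~\ref{HDcompQuotient}: given any crude \Ocomp\ $\alpha G$, build a strictly coarser one by merging $\omega$ with a suitable further boundary point, checking the merge still yields a \HDcomp\ of $\VG\setminus\mathring{E}$ because $\omega$ is \nsol . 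The ``in particular'' clause is immediate: by (i)--(iii), $\modG$ is the coarsest \Ocomp\ precisely when case (i) holds, i.e.\ when $G$ is \sol .

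The main obstacle I expect is case (iii): showing \emph{no} coarsest crude \Ocomp\ exists requires exhibiting, for an arbitrary crude \Ocomp , a strictly coarser one, and verifying the coarser space is still crude, still Hausdorff off $\mathring{E}$, and still compact — the compactness check being delicate because collapsing boundary points near a \nsol\ end interacts with the non-local-compactness of $\VG\setminus\mathring{E}$, and one must be careful that the quotient does not accidentally collapse too much (destroying Hausdorffness) or too little (leaving the original space). The right level of generality for the merge — which two boundary points to identify, as a function of $\alpha G$ — is the crux, and I would isolate it as a separate lemma before assembling the proof of (iii).
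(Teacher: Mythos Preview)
Your treatment of (i) and (ii) matches the paper's proof essentially line for line: same use of Lemma~\ref{VGcompact} for (i), same construction via Theorem~\ref{1ptOcomp}, the one-point-compactification universal property, Lemma~\ref{MagicLemma} for continuity of $f=\check f\cup\id_G$, and Proposition~\ref{MagicProp} for the quotient statement in (ii).

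Case (iii), however, is where you go astray. You propose to ``merge $\omega$ with a suitable further boundary point'' --- but $\omega$ is \emph{not} a boundary point in the relevant sense: an \Ocomp\ is by convention a \comp\ of $\VG$, so its remainder is $\alpha G\setminus\VG$, which does \emph{not} contain any end. Identifying $\omega$ with another point destroys the embedding of $\VG$ (or at least makes verifying that you still have an \Ocomp\ a genuine problem), and Lemma~\ref{HDcompQuotient} only applies to equivalence relations on the remainder. The non-\sol\ end $\omega$ is a red herring at this stage: its role was already discharged via Theorem~\ref{1ptOcomp}, which tells you there is no one-point \Ocomp . The paper's argument is much simpler than what you anticipate. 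Combining Lemma~\ref{VGcompact} (remainder non-empty) with Theorem~\ref{1ptOcomp} (no singleton remainder) gives $\lvert\alpha G\setminus\VG\rvert\ge 2$ for \emph{every} \Ocomp\ $\alpha G$. Now pick any two distinct points $x,y\in\alpha G\setminus\VG$ and form $\alpha G/\{x,y\}$. Since $x,y$ lie in the remainder, $\VG$ still embeds; since $\{x,y\}$ is closed in the compact \HD\ space $\alpha G\setminus\mathring{E}$, the quotient $(\alpha G\setminus\mathring{E})/\{x,y\}$ is again compact \HD , so $\alpha G/\{x,y\}$ is an \Ocomp . Lemma~\ref{HDcompQuotient} then gives $\alpha G/\{x,y\}\lneq\alpha G$. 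Crudeness is clearly preserved under this quotient. No careful choice of points is needed, and there is no ``delicate compactness check'': the obstacle you flag does not exist.
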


\begin{proof}
(i). If $G$ is \sol , then $\VG\setminus\mathring{E}$ is compact by Lemma~\ref{VGcompact}.
In particular, $\VG\setminus\mathring{E}$ is compact \HD\ and hence every \HDcomp\ of $\VG\setminus\mathring{E}$ must have empty remainder.

(ii). 
Lemma~\ref{VGcompact} ensures that no \Ocomp\ of $G$ has empty remainder.
Let $\alpha G =\VG\sqcup\{\ast\}$ be the crude one-point \Ocomp\ of $G$ that we constructed in the proof of Theorem~\ref{1ptOcomp}.
Given any crude \Ocomp\ $\delta G$ of $G$ we have to find an $f\colon\delta G\ge\alpha G$.
Since $\alpha G\setminus\mathring{E}$ is the one-point \HDcomp\ of $\VG\setminus\mathring{E}$, we find some $\check{f}\colon\delta G\setminus\mathring{E}\ge\alpha G\setminus\mathring{E}$, and we put $f=\check{f}\cup\id_G$.
We must show that $f$ is continuous.
For this, let $O$ be any basic open set of $\alpha G$. 
If $O$ is open in $G$ we are done, so we may assume that $O$ is of the form $\cO^{\ast}(X,\cC)$.
In particular, $O$ is crude.
By Lemma~\ref{MagicLemma} the open set $\check{f}^{-1}(O\setminus\mathring{E})$ of $\delta G\setminus\mathring{E}$ extends to an open set of $\delta G$ which coincides with $f^{-1}(O)$ since $O$ is crude. 
Thus $f$ is continuous. By Proposition~\ref{MagicProp} we know that $\alpha G$ is a quotient of $\delta G$.
To see that $\alpha G$ and $\modG$ cannot be topologically equivalent note that $\alpha G$ has singleton remainder while $\modG\setminus\VG=\Upsilon$ has size at least two by Theorem~\ref{UF:everyUFonCXextendsToUF}.

(iii).
By Lemma~\ref{VGcompact} and Theorem~\ref{1ptOcomp} we have $\vert\alpha G\setminus\VG\vert\ge 2$.
Hence we may choose some distinct two points $x$ and $y$ in $\alpha G\setminus\VG$.
Then $\alpha G/\{x,y\}$ is again an \Ocomp\ of $G$, and $\alpha G\le\alpha G/\{x,y\}$ is impossible since otherwise 
\begin{align*}
\alpha G\setminus\mathring{E}\le (\alpha G/\{x,y\})\setminus\mathring{E}=(\alpha G\setminus\mathring{E})/\{x,y\}
\end{align*}
would contradict Lemma~\ref{HDcompQuotient}. If $\alpha G$ is crude, then so is $\alpha G/\{x,y\}$.
\end{proof}

This answers Diestel's questions, but the existence of a giant class of graphs that do not have a coarsest \Ocomp\ (crude or not) is not a satisfying answer if one hopes to find interesting \comp s that might help generalising results about locally finite graphs to arbitrary ones.
That is why we do not stop here.

\section{Compactifying any graph with ends and critical vertex sets}
\label{sec4}

In this section we introduce critical vertex sets and show how they can be used together with the ends to compactify an arbitrary graph.

We call a finite set $X$ of vertices of a graph \emph{critical} if deleting $X$ leaves some infinitely many components each with neighbourhood precisely equal to $X$.
More formally, we introduce some notation first:

\begin{notation}
For every $X\in\cX$ and each $Y\subseteq X$ we write $\cC_X(Y)$ for the collection of all components $C\in\cC_X$ with $N(C)=Y$.
\end{notation}

\begin{definition}
\label{criticalDef}
A finite set $X\in\cX$ is \emph{critical} if $\cC_X(X)$ is infinite.
\end{definition}
\begin{notation}
The collection of all critical elements of $\cX$ is denoted by $\crit(G)$.
Given $X\in\cX$ we write $\crit(X)$ for the collection $\crit(G)\cap 2^X$ of all critical subsets of $X$.
\end{notation}

The following two lemmas will be used all the time without further mentioning:

\begin{lemma}
\label{powersetInducesFinPart}
The power set of $X\in\cX$ induces a finite partition of $\cC_X$, namely
\[
\pushQED{\qed} 
\{\,\cC_X(Y)\mid Y\in 2^X\,\}\setminus\{\emptyset\}.\qedhere
\popQED
\]
\end{lemma}

\begin{lemma}\label{critXchar}
For every $X\in\cX$ we have
\[
\crit(X)=\{\,Y\in 2^X\mid\cC_X(Y)\text{ is infinite}\,\}.
\]
\end{lemma}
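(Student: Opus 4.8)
The statement to prove is that, for a fixed $X\in\cX$, the critical subsets of $X$ are exactly those $Y\subseteq X$ for which $\cC_X(Y)$ is infinite. Unravelling the definitions, $Y\in\crit(X)$ means $Y\subseteq X$ and $Y\in\crit(G)$, i.e.\ $Y\subseteq X$ and $\cC_Y(Y)$ is infinite. So everything reduces to the claim that, for $Y\subseteq X$, the collection $\cC_X(Y)$ is infinite if and only if $\cC_Y(Y)$ is. The plan is to show the sharper statement that these two collections differ only by finitely many components, whence one is infinite exactly when the other is.

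The key observation is a correspondence between the components of $G-X$ and those of $G-Y$ that have neighbourhood $Y$. If $C\in\cC_X(Y)$, then $C$ is a connected subgraph of $G$ avoiding $X\supseteq Y$, all of whose $G$-neighbours lie in $N(C)=Y$; hence $C$ is already a component of $G-Y$, and it still has neighbourhood $Y$, so $C\in\cC_Y(Y)$. This yields the inclusion $\cC_X(Y)\subseteq\cC_Y(Y)$. Conversely, if $C\in\cC_Y(Y)$ happens to avoid $X$ --- equivalently, to avoid the finite set $X\setminus Y$, since $C$ avoids $Y$ anyway --- then by the same reasoning $C$ is a component of $G-X$ with neighbourhood $Y$, so $C\in\cC_X(Y)$. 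Together these give the identity $\cC_X(Y)=\{\,C\in\cC_Y(Y)\mid V(C)\cap(X\setminus Y)=\emptyset\,\}$.

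It remains to bound $\cC_Y(Y)\setminus\cC_X(Y)$, which by the identity just established is precisely the collection of components $C\in\cC_Y(Y)$ meeting $X\setminus Y$. Since distinct components of $G-Y$ are vertex-disjoint, choosing in each such $C$ a vertex of $X\setminus Y$ defines an injection into the finite set $X\setminus Y$; hence $\cC_Y(Y)\setminus\cC_X(Y)$ has size at most $\vert X\setminus Y\vert$, and in particular is finite. Therefore $\cC_X(Y)$ is infinite precisely when $\cC_Y(Y)$ is, i.e.\ precisely when $Y$ is critical, as desired.

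There is no real obstacle here; the only point that needs a moment's care is that a component of $G-Y$ witnessing the criticality of $Y$ need not survive as a component once we pass to the larger deletion set $X$ --- but only finitely many of these witnesses can be broken up in this way, because $X$ is finite, and that is exactly the content of the counting step in the last paragraph.
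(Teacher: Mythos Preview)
Your proof is correct and follows exactly the paper's approach: the paper's one-line proof simply records the identity $\cC_X(Y)=\{\,C\in\cC_Y(Y)\mid C\cap X=\emptyset\,\}$ and leaves the rest implicit, while you spell out both why that identity holds and why it yields the equivalence of infinitude. No differences worth noting.
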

\begin{proof}
This is immediate from $\cC_X(Y)=\{\,C\in\cC_Y(Y)\mid C\cap X=\emptyset\,\}$.
\end{proof}

\begin{notation}\label{CXminus}
For all $X\in\cX$ we write $\cC_X^-$
for the finite set of those components of $G-X$ that are not contained in $\cC_X(Y)$ for any critical $Y\in\crit(X)$, i.e.
\begin{align*}
\cC_X^-=\cC_X\setminus\bigsqcup_{Y\in\crit(X)}\cC_X(Y).
\end{align*}
\end{notation}

If $Y\in\cX$ is critical, then there are infinitely many independent paths between any two distinct vertices in $Y$.
Therefore, if $X\in\cX$ does not include $Y$ there is a unique component of $G-X$ that meets $Y$.
Since every component in $\cC_Y(Y)$ sends an edge to every vertex in the non-empty set $Y\setminus X$, all of the components in $\cC_Y(Y)$ avoiding $X$ are included in the same component of $G-X$ as $Y\setminus X$.

\begin{notation}
For every $X\in\cX$ and $Y\in\crit(G)\setminus 2^X$ we write $C_X(Y)$ for the unique component of $G-X$ meeting $Y$ (equivalently: including $\bigcup\cC_{X\cup Y}(Y)$).
\end{notation}

Now that we are familiar with the basics of critical vertex sets, our next aim is to link them to Diestel's ultrafilter tangles:

\begin{lemma}\label{UF:criticalComponentsLiveInUF}
If $\tau$ is an ultrafilter tangle, then $\cC_{X_\tau}(X_\tau)\in U(\tau,X_\tau)$.
\end{lemma}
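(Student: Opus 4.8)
The plan is to exploit the defining property of $X_\tau$ together with the combinatorial characterisation of critical sets via Lemma~\ref{critXchar}. Recall from the discussion preceding Theorem~\ref{UF:everyUFonCXextendsToUF} that $X_\tau$ is the least element of the non-empty set $\cX_\tau$, so the ultrafilter $U(\tau,X_\tau)$ is \emph{free}, and equivalently the star $\{\,s_{C\to X_\tau}\mid C\in\cC_{X_\tau}\,\}$ is contained in $\tau$. The key observation I would make first is that $X_\tau$ must actually be critical, i.e.\ $X_\tau\in\crit(G)$; granting this, Lemma~\ref{critXchar} gives $\cC_{X_\tau}(X_\tau)$ infinite, and then the task reduces to showing that this particular infinite set belongs to the free ultrafilter $U(\tau,X_\tau)$.

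For the first step, write $X:=X_\tau$ and consider the finite partition $\{\,\cC_X(Y)\mid Y\in 2^X\,\}\setminus\{\emptyset\}$ of $\cC_X$ from Lemma~\ref{powersetInducesFinPart}. Since $U(\tau,X)$ is an ultrafilter on $\cC_X$, exactly one cell $\cC_X(Y_0)$ of this finite partition lies in $U(\tau,X)$. Because $U(\tau,X)$ is free, this cell $\cC_X(Y_0)$ must be infinite, so by Lemma~\ref{critXchar} the set $Y_0$ is a critical subset of $X$, hence $Y_0\in\cX_\tau$ would follow once we know the tangle induced on the smaller set witnesses freeness there. More precisely: I would show $Y_0\in\cX_\tau$ by checking that $U(\tau,Y_0)$ is free. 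This uses the compatibility of the ultrafilters, namely $U(\tau,Y_0)=U(\tau,X)\rest Y_0$ via the bonding map $f_{X,Y_0}$; since $\cC_X(Y_0)$ consists of components whose neighbourhood is exactly $Y_0\subseteq X$, each such component is already a full component of $G-Y_0$, so $\cb_{X,Y_0}$ restricted to $\cC_X(Y_0)$ is injective with infinite image $\cC_{Y_0}(Y_0)\cap\{C : C\cap X=\emptyset\}$, and this image lies in $U(\tau,Y_0)$. An infinite set in $U(\tau,Y_0)$ forces $U(\tau,Y_0)$ free, so $Y_0\in\cX_\tau$. By minimality of $X=X_\tau$ in $\cX_\tau$ we get $X\subseteq Y_0$, and since $Y_0\subseteq X$ by construction, $Y_0=X$. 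Thus $\cC_X(X)=\cC_X(Y_0)\in U(\tau,X)$, which is exactly the claim.

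The main obstacle I anticipate is the bookkeeping around the restriction map in the step ``$U(\tau,X)\rest Y_0$ is free''. One must be careful that $\cC_X(Y_0)$, viewed inside $\cC_{Y_0}$ through $\cb_{X,Y_0}$, really does map to an infinite subset of $\cC_{Y_0}$ lying in the restricted ultrafilter; the identity $\cC_X(Y)=\{\,C\in\cC_Y(Y)\mid C\cap X=\emptyset\,\}$ from the proof of Lemma~\ref{critXchar} is precisely what makes $\cb_{X,Y_0}$ act injectively here (distinct components of $G-X$ with neighbourhood $Y_0$ remain distinct components of $G-Y_0$, since deleting fewer vertices cannot merge components that send no edges outside $Y_0$). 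Once that injectivity and the definition of $U'\rest X$ are unwound, membership of the infinite image in $U(\tau,Y_0)$ is immediate, and the rest is the minimality argument. Everything else is routine manipulation of the finite partition and of ultrafilter membership, so no further difficulty is expected.
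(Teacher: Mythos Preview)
Your overall strategy matches the paper's: pick the unique cell $\cC_X(Y_0)$ of the finite partition of $\cC_X$ that lies in the free ultrafilter $U(\tau,X)$, observe that its members are already components of $G-Y_0$, and use minimality of $X_\tau$ to force $Y_0=X_\tau$. The paper argues the last part by contradiction (assuming $Y_0\subsetneq X_\tau$ so that $U(\tau,Y_0)$ is principal, and pushing this back to a singleton in $U(\tau,X_\tau)$), whereas you try to show directly that $U(\tau,Y_0)$ is free; these are two sides of the same coin.

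There is, however, a genuine gap in your direct argument. The sentence ``An infinite set in $U(\tau,Y_0)$ forces $U(\tau,Y_0)$ free'' is false: every principal ultrafilter on an infinite set contains infinitely many infinite sets (e.g.\ the whole space). Knowing that the infinite set $\cC_X(Y_0)\subseteq\cC_{Y_0}$ belongs to $U(\tau,Y_0)$ does not, by itself, rule out that $U(\tau,Y_0)$ is principal at some $C\in\cC_X(Y_0)$. What you actually need is the freeness of $U(\tau,X)$ together with the injectivity you already noted: if $U(\tau,Y_0)$ were principal, generated by $\{C\}$, then by the definition of restriction there is $\cC'\in U(\tau,X)$ with $\cC'\rest Y_0\subseteq\{C\}$; intersecting with $\cC_X(Y_0)\in U(\tau,X)$ and using that $\cb_{X,Y_0}$ is the identity there gives $\cC'\cap\cC_X(Y_0)\subseteq\{C\}$, a set of size at most one in the free ultrafilter $U(\tau,X)$, which is impossible. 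This is exactly the paper's computation, read in the direct rather than contrapositive direction. Once you replace the faulty sentence by this argument, your proof is complete and equivalent to the paper's.
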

\begin{proof}
We recall that $U(\tau,X_\tau)$ is free and that $\cC_{X_\tau}$ admits the finite partition 
\begin{align*}
\{\,\cC_{X_\tau}(Y)\mid Y\in 2^{X_\tau}\,\}\setminus\{\emptyset\}.
\end{align*}
By Lemma~\ref{critXchar} there is some unique $Z\in\crit(X_\tau)$ with $\cC_{X_\tau}(Z)\in U(\tau,X_\tau)$. 
We assume for a contradiction that $Z$ is distinct from $X_\tau$ and write $\cC=\cC_{X_\tau}(Z)$.
In particular, $Z$ is a proper subset of $X_\tau$, so the ultrafilter $U(\tau,Z)$ is principal and hence generated by $\{C\}$ for some component $C$ of $G-Z$.
The components in $\cC$ are also components of $G-Z$, so we have $\cC=\cC\rest Z\in U(\tau,Z)$ and $C\in\cC$ follows.
By $U(\tau,Z)=U(\tau,X_\tau)\rest Z$ we find some $\cC'\in U(\tau,X_\tau)$ with $\cC'\rest Z\subseteq \{C\}$. 
Since $C\in\cC$ is a component of $G-X_\tau$ as well, the only possibility for $\cC'$ is $\{C\}$, so $\{C\}\in U(\tau,X_\tau)$ is the desired contradiction. 
\end{proof}

\begin{corollary}\label{XtauCrit}
If $\tau$ is an ultrafilter tangle, then $X_\tau$ is critical.\qed
\end{corollary}

\begin{corollary}\label{UF:criticalComponentsLiveInUFgeneralised}
If $\tau$ is an ultrafilter tangle and $X\in\cX_\tau$, then $\cC_X(X_\tau)\in U(\tau,X)$.
\end{corollary}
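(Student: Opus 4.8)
The plan is to reduce everything to the restriction relation $U(\tau,X)\rest X_\tau=U(\tau,X_\tau)$ together with the freeness of $U(\tau,X_\tau)$. First note that since $X\in\cX_\tau$ and $\cX_\tau$ is the up-closure of its least element $X_\tau$, we have $X\supseteq X_\tau$, so the bonding map $\cb:=\cb_{X,X_\tau}$ is defined. Recall from Section~\ref{sec2} that $U(\tau,X_\tau)=U(\tau,X)\rest X_\tau=\beta(\cb)(U(\tau,X))$; unravelling the definition of the pushforward, this means that for every $\cE\subseteq\cC_{X_\tau}$ one has $\cE\in U(\tau,X_\tau)$ if and only if $\cb^{-1}(\cE)\in U(\tau,X)$. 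This equivalence will do all the work.

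The core step is to identify $\cC_X(X_\tau)$ with a $\cb$-preimage. Let $\cD$ denote the set of components $D$ of $G-X_\tau$ with $N(D)=X_\tau$ that meet $X\setminus X_\tau$; since such components are pairwise disjoint and $X\setminus X_\tau$ is finite, $\cD$ is finite. I would then prove the set identity
\[
\cC_X(X_\tau)=\cb^{-1}\!\big(\cC_{X_\tau}(X_\tau)\setminus\cD\big).
\]
The verification rests on the elementary observation that a component $D$ of $G-X_\tau$ which avoids $X$ is also a component of $G-X$, with the same neighbourhood in $G$. Indeed, if $C\in\cC_X$ lies below some $D\in\cC_{X_\tau}(X_\tau)\setminus\cD$, then $D$ avoids $X_\tau$ (being a component of $G-X_\tau$) and avoids $X\setminus X_\tau$ (as $D\notin\cD$), hence avoids $X$ altogether, so $D$ itself is a component of $G-X$ and therefore $C=D\in\cC_X(X_\tau)$; conversely, every $C\in\cC_X(X_\tau)$ is by definition a component of $G-X_\tau$ with $N(C)=X_\tau$ disjoint from $X$, so $C\notin\cD$ and $\cb(C)=C$, giving the reverse inclusion.

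Finally I would conclude as follows. By Lemma~\ref{UF:criticalComponentsLiveInUF} we have $\cC_{X_\tau}(X_\tau)\in U(\tau,X_\tau)$, and since $X_\tau\in\cX_\tau$ the ultrafilter $U(\tau,X_\tau)$ is free; removing the finite set $\cD$ therefore leaves a set still in $U(\tau,X_\tau)$, i.e.\ $\cC_{X_\tau}(X_\tau)\setminus\cD\in U(\tau,X_\tau)$. Applying the restriction/preimage equivalence recalled above with $\cE=\cC_{X_\tau}(X_\tau)\setminus\cD$ then yields $\cC_X(X_\tau)=\cb^{-1}(\cE)\in U(\tau,X)$, as desired. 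I expect the only genuinely delicate point to be the displayed set identity — in particular, keeping straight that a component of $G-X_\tau$ that happens to avoid $X$ is a component of $G-X$ without changing its neighbourhood; the rest is routine bookkeeping with the standard correspondence between restrictions of ultrafilters and preimages under bonding maps.
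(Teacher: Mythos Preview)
Your proof is correct and follows essentially the same route as the paper: identify $\cC_X(X_\tau)$ with the components of $\cC_{X_\tau}(X_\tau)$ avoiding $X$, put this cofinite subset into the free ultrafilter $U(\tau,X_\tau)$, and transfer to $U(\tau,X)$ via the restriction relation. The only cosmetic difference is that you package the transfer through the preimage characterisation $\cE\in U(\tau,X_\tau)\Leftrightarrow\cb^{-1}(\cE)\in U(\tau,X)$, whereas the paper works directly from the definition of $U(\tau,X)\rest X_\tau$; the underlying argument is the same.
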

\begin{proof}
We write $\cC=\cC_{X_\tau}(X_\tau)$.
This set is contained in the free ultrafilter $U(\tau,X_\tau)$ by Lemma~\ref{UF:criticalComponentsLiveInUF}.
Let $\cD$ be the collection obtained from $\cC$ by discarding the finitely many components meeting $X$ from it, i.e. let $\cD=\cC\cap\cC_X$.
Then $\cD=\cC_X(X_\tau)$ holds, and $\cD$ being a cofinite subset of $\cC$ implies $\cD\in U(\tau,X_\tau)$. 
Due to $U(\tau, X_\tau)=U(\tau,X)\rest X_\tau$ we find some $\cD'\in U(\tau,X)$ with $\cD'\rest X_\tau\subseteq\cD$.
Then $\cD\subseteq\cC_X$ implies $\cD'\rest X_\tau=\cD'$, so $\cD'\in U(\tau,X)$ and $\cD'\subseteq\cD\subseteq\cC_X$ imply $\cD\in U(\tau,X)$.
\end{proof}

\begin{lemma}\label{UF:criticalXinfiniteCexistsUF}
For all $X\in\cX$, every $Y\in\crit(X)$ and each free ultrafilter $U$ on $\cC_X(Y)$ there is a unique ultrafilter tangle $\tau$ with $U(\tau,X)\cap 2^{\cC_X(Y)}=U$, and this ultrafilter tangle $\tau$ satisfies $X_\tau=Y$.
\end{lemma}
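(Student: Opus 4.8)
The plan is to build the ultrafilter tangle $\tau$ directly from $U$ by specifying the induced ultrafilter $U(\tau,Z)$ for every $Z\in\cX$, then appeal to the inverse-limit description of $\Theta$ (Diestel's bijection $\tau\mapsto(U(\tau,Z)\mid Z\in\cX)$) to conclude that these ultrafilters patch together to an $\aleph_0$-tangle. First I would treat the case $Z\supseteq X$: since $\cC_X(Y)$ is a subset of $\cC_X$, the preimage $\cb_{Z,X}^{-1}[\cC_X(Y)]$ is a cofinite-or-not subset of $\cC_Z$, and I would push $U$ forward along the bonding maps to get a free ultrafilter on $\cC_Z$ concentrating on those components lying below $\cC_X(Y)$; this is the unique ultrafilter on $\cC_Z$ restricting to $U$ on the relevant part. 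For $Z$ not containing $X$, I would use the earlier observations about critical sets: because $Y$ is critical, for any $Z\in\cX$ with $Y\not\subseteq Z$ almost all components counted by $\cC_X(Y)$ (all but those meeting $Z$) lie in the single component $C_Z(Y)$ of $G-Z$, so $U(\tau,Z)$ must be the principal ultrafilter generated by $\{C_Z(Y)\}$. The only remaining sets $Z$ are those with $Y\subseteq Z$ but $X\not\subseteq Z$, which I would handle the same way as the $Z\supseteq X$ case after replacing $X$ by $X\cap Z$ — here Lemma~\ref{critXchar} and the identity $\cC_X(Y)=\{C\in\cC_Y(Y)\mid C\cap X=\emptyset\}$ let me relate $\cC_X(Y)$ and $\cC_Z(Y)$ up to a finite symmetric difference, which ultrafilters ignore.

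Once the family $(U(\tau,Z)\mid Z\in\cX)$ is defined I would check compatibility, i.e.\ that $U(\tau,Z')\rest Z=U(\tau,Z)$ whenever $Z'\supseteq Z$. This splits into the same few regimes; in each regime one side is a principal ultrafilter on $\{C_Z(Y)\}$ or a pushforward of $U$, and compatibility is a routine verification using that restriction of ultrafilters commutes with the bonding maps and that finite sets are negligible. By Diestel's theorem the compatible family corresponds to a genuine $\aleph_0$-tangle $\tau$, and by construction $U(\tau,X)\cap 2^{\cC_X(Y)}=U$. Since $U$ is free, $U(\tau,Y)$ (which is the pushforward of $U$ to $\cC_Y$, concentrated on $\cC_Y(Y)$ by the relation above) is free, so $Y\in\cX_\tau$; conversely for every proper $Z\subsetneq Y$ we showed $U(\tau,Z)$ is principal, hence $Z\notin\cX_\tau$. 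As $\cX_\tau$ is the up-closure of its least element $X_\tau$ (recalled in the excerpt), this forces $X_\tau=Y$, and in particular $\tau$ is an ultrafilter tangle.

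For uniqueness, suppose $\tau'$ is another ultrafilter tangle with $U(\tau',X)\cap 2^{\cC_X(Y)}=U$. Then $U(\tau',X)$ refines $U$ on $\cC_X(Y)$, and since $\cC_X(Y)\in U$ is already an element of the ultrafilter $U(\tau',X)$ (it contains the free ultrafilter $U$, so it is not in the complement), we get $U(\tau',X)=$ the unique ultrafilter on $\cC_X$ containing $\cC_X(Y)$ and inducing $U$ there, which is exactly $U(\tau,X)$. Now I would run the compatibility argument in reverse: knowing $U(\tau',X)=U(\tau,X)$, for $Z\supseteq X$ the value $U(\tau',Z)$ is forced because $U(\tau',Z)\rest X=U(\tau,X)$ pins down $U(\tau',Z)$ on the cofinite-in-$\cC_Z$ trace of $\cC_X(Y)$, and freeness plus criticality of $Y$ forces it everywhere; for $Z$ incomparable to or below $X$, the principal-ultrafilter conclusions above are likewise forced. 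Hence $U(\tau',Z)=U(\tau,Z)$ for all $Z$, and Diestel's bijection gives $\tau'=\tau$.

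\textbf{Main obstacle.} The bookkeeping in the regime $Y\subseteq Z$, $X\not\subseteq Z$ — keeping track of exactly which finitely many components are gained or lost when passing between $\cC_X(Y)$, $\cC_{X\cap Z}(Y)$ and $\cC_Z(Y)$, and verifying that the pushforward of $U$ is well-defined and independent of these choices — is where the argument is most delicate; everything else reduces to the standard fact that ultrafilters are blind to finite modifications together with the already-established structural lemmas on critical sets.
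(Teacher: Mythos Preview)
Your argument is essentially correct, but it takes a much longer route than the paper's. The paper simply observes that the up-closure $U'$ of $U$ in $2^{\cC_X}$ is a free ultrafilter on $\cC_X$, invokes Theorem~\ref{UF:everyUFonCXextendsToUF} (Diestel's result that every free ultrafilter on $\cC_X$ determines an ultrafilter tangle $\tau$ with $U(\tau,X)=U'$) as a black box, and then verifies $X_\tau=Y$ in two lines: $Y\in\cX_\tau$ is immediate, and for every proper subset $Y^-\subsetneq Y$ the restriction $\cC_X(Y)\rest Y^-$ is a singleton in $U(\tau,Y^-)$, forcing $Y^-\notin\cX_\tau$. What you do instead is reconstruct the tangle by hand, specifying $U(\tau,Z)$ for every $Z$ and checking compatibility --- in effect re-proving Theorem~\ref{UF:everyUFonCXextendsToUF} in this special case. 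Your approach is more self-contained and makes uniqueness fully explicit (which the paper leaves to the word ``determines'' in the cited theorem), but at the cost of the case analysis you rightly flag as delicate.

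Two small points of care. First, your phrase ``push $U$ forward along the bonding maps'' for $Z\supseteq X$ is backwards: the bonding map $\cb_{Z,X}$ goes from $\cC_Z$ to $\cC_X$, so you are really \emph{lifting} $U$ against it, which works precisely because the map is a bijection between $\cC_Z(Y)$ and the cofinite subset $\cC_X(Y)\cap\cC_Z$ of $\cC_X(Y)$. Second, in your uniqueness paragraph you call this set the ``cofinite-in-$\cC_Z$ trace of $\cC_X(Y)$''; it is cofinite in $\cC_X(Y)$, not in $\cC_Z$, and you also lump the regime $Y\subseteq Z$, $X\not\subseteq Z$ (where $U(\tau',Z)$ is free, not principal) together with the principal cases. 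None of this breaks the argument, but tightening these points would make the write-up cleaner.
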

\begin{proof}
We let $U'$ be the ultrafilter on $\cC_X$ given by the up-closure of $U$.
This $U'$ determines an ultrafilter tangle $\tau$ by Theorem~\ref{UF:everyUFonCXextendsToUF}. 
In particular, we have $Y\in\cX_\tau$. For every $Y^-\subsetneq Y$ the set $\cC_X(Y)\rest Y^-$ is a singleton in $U(\tau,Y^-)$ witnessing $Y^-\notin\cX_{\tau}$, so $Y=X_\tau$ follows from $\cX_\tau=\lfloor X_\tau\rfloor_\cX$.
\end{proof}

\begin{lemma}\label{UF:CXY}
For every ultrafilter tangle $\tau$ and each $X\in\cX\setminus\cX_\tau$ 
we do have $X_\tau\subseteq X\cup C_X(X_\tau)$ and the ultrafilter $U(\tau,X)$ is generated by $\{C_X(X_\tau)\}$.
\end{lemma}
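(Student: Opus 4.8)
The statement has two parts: first, that $X_\tau \subseteq X \cup C_X(X_\tau)$, meaning every vertex of $X_\tau$ outside of $X$ lies in the single component $C_X(X_\tau)$ of $G-X$; and second, that the ultrafilter $U(\tau,X)$ is principal, generated by $\{C_X(X_\tau)\}$.

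The first part is essentially a restatement of the structural observation made just before the notation $C_X(Y)$ was introduced. Since $\tau$ is an ultrafilter tangle, Corollary \ref{XtauCrit} tells us $X_\tau$ is critical, so there are infinitely many components in $\cC_{X_\tau}(X_\tau)$, and between any two vertices of $X_\tau$ there are infinitely many independent paths. Hence, as $X$ does not include $X_\tau$, the graph $G-X$ has a unique component meeting $X_\tau$; this is by definition $C_X(X_\tau)$, and it contains all of $X_\tau \setminus X$. I would simply cite this discussion. So the content lies in the second part.

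For the second part, the plan is to use the compatibility $U(\tau,X_\tau) = U(\tau,X) \rest X_\tau$ together with the fact that $U(\tau,X)$ is principal — which holds because $X \notin \cX_\tau$, so $U(\tau,X)$ is generated by $\{C\}$ for some component $C$ of $G-X$. It then suffices to show $C = C_X(X_\tau)$. To do this, note $\cC_{X_\tau}(X_\tau) \in U(\tau,X_\tau)$ by Lemma \ref{UF:criticalComponentsLiveInUF}. By the restriction formula, $U(\tau,X_\tau) = \{\,\cD \subseteq \cC_{X_\tau} \mid \exists\, \cD' \in U(\tau,X)\colon \cD \supseteq \cD' \rest X_\tau\,\}$, and since $U(\tau,X)$ is generated by $\{C\}$, every $\cD' \in U(\tau,X)$ contains $C$, so $\cD' \rest X_\tau \supseteq \{\cb_{X,X_\tau}(C)\}$; thus $U(\tau,X_\tau)$ is the up-closure of $\{\cb_{X,X_\tau}(C)\}$, i.e.\ it is principal generated by $\{C'\}$ where $C'$ is the component of $G-X_\tau$ including $C$. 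But $U(\tau,X_\tau)$ also contains $\cC_{X_\tau}(X_\tau)$, which is an infinite set; for a principal ultrafilter generated by $\{C'\}$ to contain $\cC_{X_\tau}(X_\tau)$ we need $C' \in \cC_{X_\tau}(X_\tau)$, i.e.\ $C'$ is one of the critical components with neighbourhood exactly $X_\tau$. Since $C'$ sends an edge to every vertex of $X_\tau$, and in particular to every vertex of the non-empty set $X_\tau \setminus X$ (non-empty because $X_\tau = X_\tau \setminus X$ would force $X_\tau \subseteq X$, contradicting $X \notin \cX_\tau$ given $\cX_\tau = \lfloor X_\tau\rfloor_\cX$), the component $C'$ of $G-X_\tau$ lies inside the unique component of $G-X$ meeting $X_\tau$, which is $C_X(X_\tau)$. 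As $C \subseteq C'$, we get $C \subseteq C_X(X_\tau)$, and since both are components of $G-X$, $C = C_X(X_\tau)$, as desired.

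The main obstacle is bookkeeping with the restriction maps: one must be careful that $\cb_{X,X_\tau}(C)$ is the correct component of $G-X_\tau$ and that "principal generated by the up-closure of $\{\cb_{X,X_\tau}(C)\}$" genuinely forces $\cb_{X,X_\tau}(C) \in \cC_{X_\tau}(X_\tau)$. Beyond that the argument is a short deduction from Lemma \ref{UF:criticalComponentsLiveInUF} and the definition of the bonding maps; I do not anticipate any deeper difficulty.
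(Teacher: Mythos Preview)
Your argument for the second part has a genuine gap. The compatibility $U(\tau,X_\tau) = U(\tau,X)\rest X_\tau$ you invoke is only available when $X\supseteq X_\tau$: the bonding map $\cb_{X,X_\tau}\colon\cC_X\to\cC_{X_\tau}$, and with it the restriction of ultrafilters, is defined only for $X\supseteq X_\tau$. But $X\notin\cX_\tau=\lfloor X_\tau\rfloor_\cX$ means precisely that $X_\tau\not\subseteq X$, so neither $\cb_{X,X_\tau}$ nor $U(\tau,X)\rest X_\tau$ makes sense. Concretely, if $C$ is to equal $C_X(X_\tau)$ then $C$ contains the vertices of $X_\tau\setminus X$, so there is no single component of $G-X_\tau$ including $C$. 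That something has gone wrong is also visible from your conclusion that $U(\tau,X_\tau)$ is principal, generated by some $\{C'\}$: since $X_\tau\in\cX_\tau$ by definition, $U(\tau,X_\tau)$ is \emph{free}.

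The paper repairs this by passing first to $X':=X\cup X_\tau\in\cX_\tau$, where Corollary~\ref{UF:criticalComponentsLiveInUFgeneralised} gives $\cC_{X'}(X_\tau)\in U(\tau,X')$, and then restricting \emph{down} from $X'$ to $X$ (which is legitimate since $X'\supseteq X$): every component in $\cC_{X'}(X_\tau)$ sends an edge into $X_\tau\setminus X\neq\emptyset$, so $\cC_{X'}(X_\tau)\rest X=\{C_X(X_\tau)\}\in U(\tau,X)$. Your approach can be salvaged along the same lines, but only by routing through $X\cup X_\tau$ rather than attempting a direct comparison between $U(\tau,X)$ and $U(\tau,X_\tau)$.
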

\begin{proof}
By Corollary~\ref{XtauCrit} we have $X_\tau\in\crit(G)$. In particular, $X_\tau\subseteq X\cup C_X(X_\tau)$. Put $X'=X\cup X_\tau$. 
Then Corollary~\ref{UF:criticalComponentsLiveInUFgeneralised} yields $\cC_{X'}(X_\tau)\in U(\tau,X')$. 
Finally, we note that $\{C_X(X_\tau)\}=\cC_{X'}(X_\tau)\rest X\in U(\tau,X)$.
\end{proof}

\begin{definition}
On the set $\Upsilon$ of ultrafilter tangles we define the equivalence relation~${\sim}$ by letting
\begin{align*}
\tau\sim\tau'\;:\Leftrightarrow{}\,X_\tau=X_{\tau'}\;.
\end{align*}
\end{definition}

\begin{theorem}\label{critX=critUF}
Let $G$ be any graph.
\begin{enumerate}
\item The map $[\tau]_{\sim}\mapsto X_\tau$ is a bijection between $\Upsilon /{\sim}$ and $\crit(G)$.
\item For every critical $X\in\crit(G)$ the map
\begin{align*}
\tau\mapsto U(\tau,X)\cap 2^{\cC_X(X)}
\end{align*}
is a bijection between the ultrafilter tangles $\tau$ with $X_\tau=X$ and the free ultrafilters on $\cC_X(X)$. 
Moreover, the number of ultrafilter tangles $\tau$ with $X_\tau=X$ is $2^{2^\kappa}$ where $\kappa=\vert\cC_X(X)\vert\ge\aleph_0$.
\item $\vert\crit(G)\vert\le\vert V\vert$.
\item $\vert\crit(G)\vert\cdot 2^{\frakc}\le\vert\Upsilon\vert$.
\end{enumerate}
\end{theorem}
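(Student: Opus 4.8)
The plan is to prove the four assertions roughly in order, using Theorem~\ref{UF:everyUFonCXextendsToUF}, Theorem~\ref{Top:compactification:StoneCechCard}, and the lemmas developed in this section (especially Corollary~\ref{XtauCrit}, Lemma~\ref{UF:criticalXinfiniteCexistsUF}, and Lemma~\ref{UF:CXY}).

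For assertion (i), I would first check that the map $[\tau]_{\sim}\mapsto X_\tau$ is well-defined: this is immediate from the definition of ${\sim}$, since $\tau\sim\tau'$ means exactly $X_\tau = X_{\tau'}$. Injectivity is then also immediate. Corollary~\ref{XtauCrit} shows the map lands in $\crit(G)$. For surjectivity, given a critical $X\in\crit(G)$, the set $\cC_X(X)$ is infinite by Definition~\ref{criticalDef}, so it carries a free ultrafilter $U$; Lemma~\ref{UF:criticalXinfiniteCexistsUF} (applied with $Y=X$) produces an ultrafilter tangle $\tau$ with $X_\tau = X$. Hence the map is onto.

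For assertion (ii), fix a critical $X$. Lemma~\ref{UF:criticalXinfiniteCexistsUF} (again with $Y=X$) says that for each free ultrafilter $U$ on $\cC_X(X)$ there is a \emph{unique} ultrafilter tangle $\tau$ with $U(\tau,X)\cap 2^{\cC_X(X)}=U$, and that this $\tau$ satisfies $X_\tau=X$; this is exactly surjectivity plus injectivity of the displayed map onto the free ultrafilters on $\cC_X(X)$, once one also observes that every ultrafilter tangle $\tau$ with $X_\tau=X$ does have $U(\tau,X)\cap 2^{\cC_X(X)}$ free — which follows because $X=X_\tau\in\cX_\tau$ makes $U(\tau,X)$ free and Corollary~\ref{UF:criticalComponentsLiveInUFgeneralised} (with $X_\tau = X$) puts $\cC_X(X)$ itself into $U(\tau,X)$, so the trace on $2^{\cC_X(X)}$ is a genuine (free) ultrafilter on that infinite set. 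For the cardinality count, $\kappa:=\vert\cC_X(X)\vert\ge\aleph_0$ by criticality, and the number of ultrafilters on an infinite set of size $\kappa$ is $2^{2^\kappa}$ by Theorem~\ref{Top:compactification:StoneCechCard}; all but $\kappa$-many of these are free, and $2^{2^\kappa}>\kappa$, so the number of free ultrafilters is also $2^{2^\kappa}$.

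Assertion (iii) follows from (i): by (i) there is an injection $\crit(G)\hookrightarrow\cX$, $X\mapsto X$ (trivially), but more to the point each critical $X$ is a \emph{distinct} finite subset of $V$, and the number of such is at most $\vert V\vert$ when $V$ is infinite (and $\crit(G)=\emptyset$ when $V$ is finite); so $\vert\crit(G)\vert\le\vert[V]^{<\aleph_0}\vert=\vert V\vert$ in the infinite case and trivially otherwise. Finally, assertion (iv): if $\crit(G)=\emptyset$ then $\Upsilon=\emptyset$ and $\vert\crit(G)\vert\cdot 2^{\frakc}=0\le 0$, so assume $\crit(G)\ne\emptyset$. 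By (i) the sets $\{\tau\in\Upsilon\mid X_\tau=X\}$ for $X\in\crit(G)$ partition $\Upsilon$ into $\vert\crit(G)\vert$ nonempty blocks, and by (ii) each block has size $2^{2^\kappa}\ge 2^{2^{\aleph_0}}=2^{\frakc}$ (recall $\frakc=2^{\aleph_0}$, written $\mathfrak{f}$ here — I should double-check the macro: the paper uses $\frakc$ for $\mathfrak{c}$, and earlier writes $2^{\frakc}$, so I will match that notation). Summing, $\vert\Upsilon\vert\ge\vert\crit(G)\vert\cdot 2^{\frakc}$. \textbf{The main obstacle} I anticipate is not any single hard step but bookkeeping: making sure the trace map in (ii) really is a bijection onto the free ultrafilters (not merely onto all ultrafilters), which hinges on correctly invoking Corollary~\ref{UF:criticalComponentsLiveInUFgeneralised} with the critical $X$ playing the role of both $X$ and $X_\tau$, and being careful that $\frakc$ there abbreviates $\mathfrak{c} = 2^{\aleph_0}$ so that $2^{2^\kappa}\ge 2^{\frakc}$ for every infinite $\kappa$.
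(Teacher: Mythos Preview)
Your proposal is correct and follows essentially the same route as the paper: (i) via Corollary~\ref{XtauCrit} and Lemma~\ref{UF:criticalXinfiniteCexistsUF}, (ii) via Lemma~\ref{UF:criticalXinfiniteCexistsUF} together with $\cC_X(X)\in U(\tau,X)$ (the paper cites Lemma~\ref{UF:criticalComponentsLiveInUF} directly rather than its corollary, but with $X=X_\tau$ these coincide) and Theorem~\ref{Top:compactification:StoneCechCard}, (iii) from $\crit(G)\subseteq [V]^{<\aleph_0}$, and (iv) by combining (i) and (ii). Your extra bookkeeping on edge cases and the freeness of the trace ultrafilter is sound and merely makes explicit what the paper leaves implicit.
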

\begin{proof}
(i). This map is well-defined by Corollary~\ref{XtauCrit}.
By definition of ${\sim}$ it is injective, and it is surjective by Lemma~\ref{UF:criticalXinfiniteCexistsUF}.

(ii). This map is well-defined by Lemma~\ref{UF:criticalComponentsLiveInUF} and bijective by Lemma~\ref{UF:criticalXinfiniteCexistsUF}.
The number of free ultrafilters on $\cC_X(X)$ is $2^{2^\kappa}$ by Theorem~\ref{Top:compactification:StoneCechCard}.

(iii) is immediate from $\crit(G)\subseteq\cX=[V]^{<\aleph_0}$.

(iv) follows from (i) combined with (ii).
\end{proof}

As a consequence of Theorem~\ref{critX=critUF}~(i) the quotient $\modG/{\sim}$ witnesses that

\begin{theorem}\label{GisCompactifiedByEndsCrit}
Every graph is compactified by its ends plus critical vertex sets.\qed
\end{theorem}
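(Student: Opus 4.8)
The plan is to take as witness the quotient space $\modG/{\sim}$, where we first extend the equivalence relation ${\sim}$ from $\Upsilon$ to the whole remainder $\Theta=\Omega\sqcup\Upsilon$ by letting every end tangle be equivalent only to itself; thus the classes of ${\sim}$ on $\Theta$ are the singletons $\{\omega\}$ with $\omega\in\Omega$ together with the classes of ${\sim}$ on $\Upsilon$. Write $\iota\colon G\hookrightarrow\modG$ for the canonical inclusion and $q\colon\modG\to\modG/{\sim}$ for the quotient map. I claim that $(\modG/{\sim},\,q\circ\iota)$ is a \comp\ of $G$ whose remainder is $\Omega\sqcup\crit(G)$, the ends plus critical vertex sets.

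First I would check the three requirements in the definition of a \comp . Compactness of $\modG/{\sim}$ is immediate, since $\modG$ is compact by Theorem~\ref{DiestelsTangleCompWorks}~(i) and a continuous image of a compact space is compact. For the embedding, note that $G$ is open in $\modG$ and that ${\sim}$ identifies no two points of $G$, so $G$ is a saturated open subset of $\modG$; hence $q[G]$ is open in $\modG/{\sim}$ and the restriction $q\rest G\colon G\to q[G]$ is a continuous open bijection, i.e.\ a homeomorphism. Finally $q[G]$ is dense in $\modG/{\sim}$, because $q$ is a continuous surjection and $G$ is dense in $\modG$. This already makes $\modG/{\sim}$ a \comp\ of $G$.

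It remains to identify the remainder. By construction the remainder of $\modG/{\sim}$ is $\Theta/{\sim}=\Omega\,\sqcup\,(\Upsilon/{\sim})$, and Theorem~\ref{critX=critUF}~(i) supplies the canonical bijection $\Upsilon/{\sim}\to\crit(G)$, $[\tau]_{\sim}\mapsto X_\tau$; together with the identity on $\Omega$ this exhibits the remainder as $\Omega\sqcup\crit(G)$, as desired. This is exactly the sense in which the statement follows from Theorem~\ref{critX=critUF}~(i).

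I do not expect a genuine obstacle: the whole argument is the general principle that a quotient of a \comp\ over an equivalence relation supported on the remainder is again a \comp , combined with the bookkeeping already done in Theorem~\ref{critX=critUF}. The only points warranting a moment's care are that ${\sim}$ must be extended trivially over $\Omega$ before forming the quotient, and that $G$, being open and saturated, genuinely embeds; note in particular that no Hausdorffness is needed anywhere, as $\modG$ itself is not \HD . (If one wanted the stronger assertion that $\modG/{\sim}$ is even an \Ocomp , one would further verify that $(\modG/{\sim})\setminus\mathring E$ is a \HDcomp\ of $\VG\setminus\mathring E$, but that is not required for the statement at hand.)
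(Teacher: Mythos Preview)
Your proposal is correct and follows exactly the approach the paper takes: the paper states the theorem as an immediate consequence of Theorem~\ref{critX=critUF}~(i), witnessed by the quotient $\modG/{\sim}$, and you have simply spelled out the routine verifications (compactness, embedding, density, identification of the remainder) that the paper leaves implicit.
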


Later, Theorem~\ref{minGquotientModG} will yield a more explicit description of the topology of the quotient $\modG/{\sim}$, and it will show that $\modG/{\sim}$ is a crude \Ocomp .

We close this section with a short lemma which we do not need for the remainder of this paper, but which we deem worth a few lines:

\begin{lemma}
The vertices of infinite degree are precisely the vertices that either dominate an end or lie in a critical vertex set.
\end{lemma}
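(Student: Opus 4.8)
The plan is to prove the two inclusions of the claimed equivalence separately. Call a vertex $v$ \emph{good} if it either dominates an end or lies in some critical vertex set; we want to show that $v$ has infinite degree if and only if $v$ is good.

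First I would handle the direction ``good $\Rightarrow$ infinite degree''. Suppose $v$ dominates an end $\omega$, meaning there is a ray in $\omega$ together with infinitely many $v$--$R$ paths meeting only in $v$; these supply infinitely many edges at $v$, so $\deg(v)=\infty$. Suppose instead $v$ lies in a critical set $X$. Then $\cC_X(X)$ is infinite, and every component $C\in\cC_X(X)$ has $N(C)=X\ni v$, so $C$ sends at least one edge to $v$; since the components are pairwise disjoint and there are infinitely many of them, $v$ again has infinite degree.

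The substantive direction is ``infinite degree $\Rightarrow$ good''. Fix $v$ with $\deg(v)=\infty$. I would distinguish cases according to how the edges at $v$ behave after deleting small separators. Consider the components of $G-v$: either infinitely many of them are joined to $v$, or only finitely many are but then at least one such component $C$ has infinitely many edges to $v$. In the second case I would work inside $C\cup\{v\}$: one can build, greedily, infinitely many $v$--$v$ paths through $C$ internally disjoint apart from $v$, hence (by a standard end-existence argument, e.g.\ extracting a ray from an infinite fan, or invoking König-type / inverse-limit reasoning as in Theorem~\ref{EndsAreDirections}) either find an end dominated by $v$, or find a finite set $Y\ni v$ after whose deletion infinitely many components of that fan structure have neighbourhood exactly some fixed set containing $v$; refining $Y$ gives a critical set containing $v$. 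In the first case, where infinitely many components of $G-v$ touch $v$, I would again split: if for every finite $X\ni v$ only finitely many components of $G-X$ are adjacent to all of $X$ (or to a fixed subset), one can diagonalise to produce a ray dominated by $v$; otherwise some finite $X\ni v$ has $\cC_X(X)$ infinite, i.e.\ $X$ is critical and contains $v$. The cleanest packaging is probably: if $v$ lies in no critical set, then for every $X\in\cX$ with $v\in X$ the set $\{\,C\in\cC_X\mid v\in N(C)\,\}$ is infinite only because of components in $\bigsqcup_{Y\subsetneq X,\,Y\ni v}\cC_X(Y)$ with $Y$ non-critical being finite — forcing, via Lemma~\ref{powersetInducesFinPart} and an inverse-limit / compactness argument over $\cX$, a direction $f$ with $v\in N(f(X))$ for all $X$, which by Theorem~\ref{EndsAreDirections} is an end, and the infinitely many edges at $v$ into the $f(X)$ witness that $v$ dominates it.

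The main obstacle I expect is making the dichotomy exhaustive and the ``otherwise'' branch actually yield a genuine \emph{end} rather than merely an infinite descending sequence of components: this is exactly where one must invoke the identification of ends with directions (Theorem~\ref{EndsAreDirections}) and check that $v$ being adjacent to each component in a compatible chain really produces a ray dominated by $v$ (one needs the $v$--$C(X,f)$ edges to assemble into infinitely many independent $v$--$R$ paths). A secondary nuisance is bookkeeping the finitely-many-exceptional-components from Notation~\ref{CXminus}/Lemma~\ref{powersetInducesFinPart} so that ``infinitely many neighbouring components'' can always be pushed into either a critical class $\cC_X(Y)$ or a single persistent component; once that is set up, the compactness argument over the directed poset $\cX$ closes the proof.
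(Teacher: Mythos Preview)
Your easy direction is fine; the paper in fact takes it for granted. For the hard direction, the case split on the components of $G-v$ and the fan-building are unnecessary detours. Your ``cleanest packaging'' is essentially the paper's argument, but you have misstated the crucial point: you write that $\{\,C\in\cC_X\mid v\in N(C)\,\}$ ``is infinite only because of\ldots'', whereas what you actually need---and what makes the inverse-limit step work---is that this set $\cD_X$ is \emph{finite}. The reason is a one-line pigeonhole: by Lemma~\ref{powersetInducesFinPart} every $C\in\cD_X$ lies in some $\cC_X(Y)$ with $v\in Y\subseteq X$ (you should allow $Y=X$ here, not only $Y\subsetneq X$), and if any such $\cC_X(Y)$ were infinite then $Y$ would be critical by Lemma~\ref{critXchar} and contain $v$, contrary to hypothesis. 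Since $v$ has infinite degree, each $\cD_X$ is also non-empty. Now $\{\cD_X,\cb_{X',X}\rest\cD_{X'}\}$, indexed over the cofinal set of $X\in\cX$ containing $v$, is an inverse system of non-empty \emph{finite} discrete spaces, so its limit is non-empty; via Theorem~\ref{EndsAreDirections} any limit point is an end $\omega$ with $v$ sending an edge into every $C(X,\omega)$.

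Your concern about upgrading this to genuine domination is legitimate but easy: having a neighbour in $C(X,\omega)$ for every finite $X\ni v$ means that for any finite $S\not\ni v$ one may take $X=S\cup\{v\}$ to see that $v$ lies in the same component of $G-S$ as $C(X,\omega)\subseteq C(S,\omega)$; hence no finite set separates $v$ from $\omega$, which is equivalent to domination.
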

\begin{proof}
It suffices to show that every vertex of infinite degree that is not contained in any critical vertex set dominates some end.
For this, let any such vertex be given and write $\cY$ for the directed poset formed by the $X\in\cX$ containing it.
Next, for all $X\in\cY$ we let $\cD_X$ consist of the components of $G-X$ to which our vertex sends an edge.
Since our vertex is not contained in any critical vertex set, we deduce that each $\cD_X$ is finite,
and our vertex having infinite degree ensures that no $\cD_X$ is empty.
Hence $\{\,\cD_X\,,\,\cb_{X',X}\rest\cD_{X'}\,,\,\cY\,\}$ is an inverse system of non-empty finite spaces, so its limit is non-empty.
Using Theorem~\ref{EndsAreDirections} and the fact that $\cY$ is cofinal in $\cX$, we obtain an end that is dominated by the given vertex.
\end{proof}

\section{Ends and critical vertex sets as tangles}
\label{sec:EndsCritAsTangles}
\label{sec5}

Theorem~\ref{GisCompactifiedByEndsCrit} raises the question whether it is possible to find a subset $S'$ of the set $S$ of finite order separations and a collection $\cF$ of stars in $S'$ such that the $\cF$-tangles of $S'$ are precisely the ends plus the critical vertex sets of $G$, i.e., whether $\modG/{\sim}$ is again a tangle-type \comp .
As the main result of this section, we show that this is the case.

\begin{definition}
Let $X\in\cX$ be given.
We call a subcollection $\cC\subseteq\cC_X$ \emph{tame} if for no $Y\in\crit(X)$ both $\cC_X(Y)\cap\cC$ and $\cC_X(Y)\cap (\cC_X\setminus\cC)$ are infinite.
If $\{\cC,\cC'\}$ is a bipartition of $\cC_X$ with both $\cC$ and $\cC'$ tame, then we call it \emph{tame}.
Furthermore, we call the corresponding finite order separation and its orientations \emph{tame}.
\end{definition}

\begin{example}
Finite subsets of $\cC_X$ are tame, and for all $Y\in 2^X$ each cofinite subset of $\cC_X(Y)$ is tame.
\end{example}

\begin{notation}
We write $\Pt{X}$ for the set of all tame subsets of $\cC_X$, partially ordered by inclusion.
We write $\St$ for the set of all tame finite order separations, and we write $\Ft$ for the set of all finite stars in $\St$ of finite interior. Instead of $\Ft$-tangles of $\St$ we shall say $\aleph_0$-tangles of $\St$, and we write $\Tt$ for the set of all $\aleph_0$-tangles of $\St$.
\end{notation}

Our first aim in this section is to find an inverse limit description of $\Tt$.
For this, we will show that every $\aleph_0$-tangle of $\St$ induces, for every $X\in\cX$, a particular type of filter on the poset $\Pt{X}$. 
However, we need some technical lemmas first:

\begin{lemma}\label{Tangles:FilterIntersection}
Given $X\in\cX$, any tangle $\tau\in\Tt$ containing $s_{X\to\cC}$ and $s_{X\to\cD}$ for some two subsets $\cC$ and $\cD$ of $\cC_X$ also contains $s_{X\to\cC\cap\cD}$.
\end{lemma}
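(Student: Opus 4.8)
The plan is to argue by contradiction, producing a forbidden finite star of finite interior inside $\tau$, once we have checked that all separations involved are genuinely tame. Throughout write $\overline\cF:=\cC_X\setminus\cF$ for $\cF\subseteq\cC_X$.

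First I would record the necessary tameness. Since $s_{X\to\cC}$ and $s_{X\to\cD}$ lie in $\tau\subseteq\vSt$, their underlying separations are tame, so all of $\cC,\overline\cC,\cD,\overline\cD$ are tame. Unravelling the definition, a set $\cK\subseteq\cC_X$ is tame precisely when $\cK\cap\cC_X(Y)$ is finite or cofinite in $\cC_X(Y)$ for every $Y\in\crit(X)$, and this property is plainly preserved by complements and finite intersections. Hence the four pairwise disjoint sets $\cK_0=\cC\cap\cD$, $\cK_1=\cC\cap\overline\cD$, $\cK_2=\overline\cC\cap\cD$, $\cK_3=\overline\cC\cap\overline\cD$, which partition $\cC_X$, together with their complements are all tame; in particular $s_{\cK_i\to X}\in\vSt$ for every $i$, and $s_{X\to\cC\cap\cD}=s_{X\to\cK_0}\in\vSt$, so $\tau$ orients it.

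Next I would show that, assuming $s_{X\to\cC\cap\cD}\notin\tau$, all four separations $s_{\cK_i\to X}$ lie in $\tau$. For $i=0$ this is immediate, since $\tau$ is an orientation of $\St$ and it does not contain $s_{X\to\cK_0}$. For $i\in\{1,2,3\}$, suppose instead $s_{X\to\cK_i}\in\tau$. Observe that $\cK_i$ is disjoint from $\cD$ when $i=1$ and from $\cC$ when $i\in\{2,3\}$; writing $\cK'$ for the relevant one of $\cD,\cC$, we then have $s_{X\to\cK_i},s_{X\to\cK'}\in\tau$ with $\cK_i\cap\cK'=\emptyset$. But two separations of the form $s_{X\to\cK}$ and $s_{X\to\cK'}$ with $\cK\cap\cK'=\emptyset$ either point away from each other or are mutually reverse (depending on whether $\cK\sqcup\cK'\subsetneq\cC_X$ or $=\cC_X$), contradicting respectively the consistency of $\tau$ or the fact that $\tau$ is an orientation. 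Hence $s_{\cK_i\to X}\in\tau$ for $i=1,2,3$ as well.

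Finally I would assemble the contradiction: the set $\sigma=\{\,s_{\cK_i\to X}\mid i\le 3,\ \cK_i\ne\emptyset\,\}$ is a subset of $\tau$, and it is a star in $\St$ because the $\cK_i$ are pairwise disjoint, so every small side $V[\cK_i]\cup X$ is contained in every other big side $V\setminus V[\cK_j]$. Its interior is $\bigcap_i(V\setminus V[\cK_i])=V\setminus V\big[\bigcup_i\cK_i\big]=V\setminus V[\cC_X]=X$, which is finite. Thus $\sigma\in\Ft$, contradicting that $\tau$, being an $\aleph_0$-tangle of $\St$, avoids $\Ft$; therefore $s_{X\to\cC\cap\cD}\in\tau$. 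The one genuinely delicate point is the tameness bookkeeping of the first step — one must be sure each $s_{\cK_i\to X}$ really lies in $\St$, since only then is it oriented by $\tau$ — but this reduces to the elementary fact that tameness is a finite-or-cofinite condition on each $\cC_X(Y)$ and hence closed under the Boolean operations; everything else is routine manipulation of consistency, stars, and interiors.
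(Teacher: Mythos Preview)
Your proof is correct and follows essentially the same strategy as the paper: verify that the relevant Boolean combinations of $\cC$ and $\cD$ are tame, assume $s_{\cC\cap\cD\to X}\in\tau$, and exhibit a finite star of interior $X$ inside $\tau$. The only difference is cosmetic: the paper builds a three-element star $\{\,s_{\cC\cap\cD\to X},\,s_{X\to\cK},\,s_{X\to\cD}\,\}$ with $\cK=\cC\cup(\cC_X\setminus\cD)$, pulling $s_{X\to\cK}$ into $\tau$ in one stroke via $s_{X\to\cK}\le s_{X\to\cC}$ and consistency, whereas you partition $\cC_X$ into four pieces and argue separately for each $s_{\cK_i\to X}$; your route is slightly longer but just as sound.
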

\begin{proof}[{Proof (adapted from {\cite[Lemma 1.2]{EndsAndTangles}})}]
Given $X\in\cX$, a tangle $\tau\in\Tt$ and subsets $\cC$ and $\cD$ of $\cC_X$ with $s_{X\to\cC}\in\tau$ and $s_{X\to\cD}\in\tau$ we note first that $s_{X\to\cC\cap\cD}$ is tame so $\tau$ contains one of $s_{X\to\cC\cap\cD}$ and $s_{\cC\cap\cD\to X}$.
Assume for a contradiction that $\tau$ contains $s_{\cC\cap\cD\to X}$.
Clearly, $s_{X\to\cK}$ is tame for $\cK=\cC\cup (\cC_X\setminus \cD)$.
By consistency, $s_{X\to\cK}\le s_{X\to\cC}\in\tau$ implies $s_{X\to\cK}\in\tau$.
But then the star $\{\,s_{\cC\cap\cD\to X}\,,\,s_{X\to\cK}\,,\,s_{X\to\cD}\,\}$
has interior $X$ and is included in $\tau$, so $\tau$ does not avoid $\Ft$, a contradiction.
\end{proof}

Every $\aleph_0$-tangle $\tau$ of $\St$ induces, for every $X\in\cX$, the filter
\begin{align*}
F(\tau,X):=\{\,\cC\in\Pt{X}\mid s_{X\to\cC}\in\tau\,\}
\end{align*}
on the poset $\Pt{X}$ as the next lemma shows:

\begin{lemma}\label{Tangles:Filter}
For every $\tau\in\Tt$ and $X\in\cX$ the set $F(\tau,X)$ is a filter on $(\Pt{X},{\subseteq})$.
\end{lemma}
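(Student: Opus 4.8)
The goal is to verify that $F(\tau,X)$ is a filter on the poset $(\Pt{X},{\subseteq})$, which means checking three things: (a) $F(\tau,X)$ is non-empty (equivalently, it contains the top element $\cC_X$), and it does not contain the empty set; (b) $F(\tau,X)$ is upward closed in $\Pt{X}$; and (c) $F(\tau,X)$ is closed under taking finite meets, i.e.\ under finite intersections. Throughout I would fix $\tau\in\Tt$ and $X\in\cX$ and freely use the definition $F(\tau,X)=\{\,\cC\in\Pt{X}\mid s_{X\to\cC}\in\tau\,\}$.

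\textbf{Non-triviality and the top element.} First I would observe that $s_{X\to\cC_X}=(V\setminus V[\cC_X],\,X\cup V[\cC_X])=(\emptyset\text{-side})$; more precisely, since $V[\cC_X]\cup X=V$, we get $s_{X\to\cC_X}=(X,V)$, whose reverse $s_{\cC_X\to X}=(V,X)$ would, if it were in $\tau$, give a singleton star $\{s_{\cC_X\to X}\}$ of interior $X$ (finite), contradicting that $\tau$ avoids $\Ft$. Hence $\tau$ contains $s_{X\to\cC_X}$, and since $\cC_X$ is trivially tame, $\cC_X\in F(\tau,X)$, so $F(\tau,X)\neq\emptyset$. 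For the empty set: $s_{X\to\emptyset}=(V,X\cup V[\emptyset])=(V,X)=s_{\cC_X\to X}$, which we just argued is not in $\tau$; alternatively $s_{\emptyset\to X}=(X,V)\in\tau$ by consistency together with the fact that $(X,V)\le$ everything, so $s_{X\to\emptyset}\notin\tau$ by consistency (a tangle is an orientation, so it cannot contain both). Either way $\emptyset\notin F(\tau,X)$. (A small point to be careful about is that $\emptyset$ is vacuously tame and hence genuinely an element of $\Pt{X}$, so excluding it is a real requirement, not automatic.)

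\textbf{Upward closure.} Suppose $\cC\in F(\tau,X)$ and $\cC\subseteq\cD$ with $\cD\in\Pt{X}$. Then $V[\cC]\subseteq V[\cD]$, hence $V\setminus V[\cD]\subseteq V\setminus V[\cC]$ and $X\cup V[\cD]\supseteq X\cup V[\cC]$, which is precisely $s_{X\to\cD}\le s_{X\to\cC}$. Since $s_{X\to\cC}\in\tau$ and $\tau$ is consistent, $\tau$ must contain $s_{X\to\cD}$ rather than its reverse: if it contained $s_{\cD\to X}$ instead, then $s_{X\to\cC}$ and $s_{\cD\to X}$ would be two separations in $\tau$ with $s_{X\to\cD}\le s_{X\to\cC}$, i.e.\ pointing away from each other, contradicting consistency. (This is exactly the ``$s_{X\to\cK}\le s_{X\to\cC}\in\tau$ implies $s_{X\to\cK}\in\tau$'' move already used inside the proof of Lemma~\ref{Tangles:FilterIntersection}.) Hence $\cD\in F(\tau,X)$.

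\textbf{Closure under finite intersection.} This is the only step with real content, and it is essentially immediate from Lemma~\ref{Tangles:FilterIntersection}: if $\cC,\cD\in F(\tau,X)$ then $s_{X\to\cC},s_{X\to\cD}\in\tau$, so Lemma~\ref{Tangles:FilterIntersection} gives $s_{X\to\cC\cap\cD}\in\tau$; and $\cC\cap\cD$ is tame because it is a subset of the tame set $\cC$ and tameness is inherited by subsets modulo the finiteness clause — here I should note explicitly why $\cC\cap\cD\in\Pt{X}$: for each $Y\in\crit(X)$, $\cC_X(Y)\cap(\cC\cap\cD)\subseteq\cC_X(Y)\cap\cC$, so if the latter is finite so is the former, and if $\cC_X(Y)\cap\cC$ is cofinite in $\cC_X(Y)$ and $\cC_X(Y)\cap\cD$ is cofinite in $\cC_X(Y)$ then their intersection is still cofinite, hence $\cC_X(Y)\cap(\cC_X\setminus(\cC\cap\cD))$ is finite; so in every case the tameness condition at $Y$ holds for $\cC\cap\cD$. (Alternatively, just invoke that $s_{X\to\cC\cap\cD}$ being well-defined and in $\tau\subseteq\vec{\St}$ forces it to be tame.) Then $\cC\cap\cD\in F(\tau,X)$, and finite intersections follow by induction.

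\textbf{Main obstacle.} The conceptual content is packaged into Lemma~\ref{Tangles:FilterIntersection}, so the residual difficulty here is mostly bookkeeping: making sure that each relevant subcollection of $\cC_X$ one writes down is actually tame (so that $\tau$ even orients the corresponding separation), and keeping the directions of the separations $s_{X\to\cC}$ versus $s_{\cC\to X}$ straight when invoking consistency. I would therefore front-load a sentence recording that tameness passes to subsets and to finite intersections, and then the filter axioms drop out as above.
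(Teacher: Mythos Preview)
Your proof is correct and follows essentially the same approach as the paper's: both use the star $\{s_{X\to\emptyset}\}=\{(V,X)\}\in\Ft$ to get $\emptyset\notin F(\tau,X)$ and $\cC_X\in F(\tau,X)$, consistency of $\tau$ for upward closure, and Lemma~\ref{Tangles:FilterIntersection} for closure under intersection. Your version is more explicit---you unpack the separations $s_{X\to\cC_X}$ and $s_{X\to\emptyset}$ and add a direct tameness argument for $\cC\cap\cD$ (note, though, that your parenthetical ``tameness is inherited by subsets'' is false as stated; it is your subsequent case analysis, or the alternative observation that $\tau\subseteq\vSt$, that actually does the work)---but the structure is identical to the paper's.
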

\begin{proof}
The star $\{s_{X\to\emptyset}\}\in\Ft$ is avoided by $\tau$, ensuring $\emptyset\notin F(\tau,X)$ as well as $\cC_X\in F(\tau,X)$.
For any two $\cC,\cD\in F(\tau,X)$ we have $\cC\cap\cD\in F(\tau,X)$ by Lemma~\ref{Tangles:FilterIntersection}.
Finally, for any $\cC\in F(\tau,X)$ and $\cD\in\Pt{X}$ with $\cC\subseteq\cD$ we also have $\cD\in F(\tau,X)$ by consistency of $\tau$.
\end{proof}

\begin{proposition}\label{Tangles:F(T,X)}
For every $\tau\in\Tt$ and $X\in\cX$ exactly one of the following holds:
\begin{enumerate}
\item There is a component $C$ of $G-X$ such that $F(\tau,X)$ is the principal filter on the poset $\Pt{X}$ at $\{C\}$, i.e. such that $F(\tau,X)=\lfloor\{C\}\rfloor_{\Pt{X}}$.
\item There is some $Y\in\crit(X)$ such that $F(\tau,X)$ is the up-closure in $\Pt{X}$ of the cofinite filter on $\cC_X(Y)$.
\end{enumerate}
\end{proposition}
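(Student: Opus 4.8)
The plan is to show that $F(\tau,X)$ behaves like an ultrafilter on the sub-lattice $\Pt{X}$ of the power set of $\cC_X$, and then to read off the two cases from the finite/cofinite structure of $\Pt{X}$.

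First I would record that $\Pt{X}$ is a Boolean subalgebra of $2^{\cC_X}$: by Lemma~\ref{critXchar}, a subcollection $\cC$ is tame exactly if for each $Y\in\crit(X)$ its trace $\cC\cap\cC_X(Y)$ is finite or cofinite in $\cC_X(Y)$, and this condition is visibly preserved under complementation and under finite unions and intersections. In particular $\cC\in\Pt{X}$ implies $\cC_X\setminus\cC\in\Pt{X}$; since $\tau$ orients the tame separation corresponding to the bipartition $\{\cC,\cC_X\setminus\cC\}$ of $\cC_X$, exactly one of $s_{X\to\cC}$ and $s_{\cC\to X}=s_{X\to\cC_X\setminus\cC}$ lies in $\tau$, so exactly one of $\cC$ and $\cC_X\setminus\cC$ lies in $F(\tau,X)$. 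Together with Lemma~\ref{Tangles:Filter} this makes $F(\tau,X)$ an ``ultrafilter on $\Pt{X}$'', from which I deduce the primeness we need: whenever a member $\cD\in F(\tau,X)$ is a finite disjoint union $\cA_1\sqcup\dots\sqcup\cA_k$ of tame sets, some $\cA_i$ lies in $F(\tau,X)$. For $k=2$: if neither $\cA_i$ were in $F(\tau,X)$, then both complements would be, whence $\cC_X\setminus\cD=(\cC_X\setminus\cA_1)\cap(\cC_X\setminus\cA_2)\in F(\tau,X)$ by Lemma~\ref{Tangles:FilterIntersection}, contradicting $\cD\in F(\tau,X)$ and $\emptyset\notin F(\tau,X)$; the general case follows by induction since $\cA_2\sqcup\dots\sqcup\cA_k$ is tame.

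Next I would apply this to the finite partition $\cC_X=\cC_X^-\sqcup\bigsqcup_{Y\in\crit(X)}\cC_X(Y)$ supplied by Lemma~\ref{powersetInducesFinPart} and Notation~\ref{CXminus}: since $\cC_X\in F(\tau,X)$, exactly one of these (tame) blocks lies in $F(\tau,X)$. If it is the finite block $\cC_X^-$, a second application to the partition of $\cC_X^-$ into singletons puts some $\{C\}\in F(\tau,X)$ with $C\in\cC_X$; then $F(\tau,X)=\lfloor\{C\}\rfloor_{\Pt{X}}$, because $\lfloor\{C\}\rfloor_{\Pt{X}}\subseteq F(\tau,X)$ by upward closure and every $\cD\in F(\tau,X)$ must contain $C$ (else $\cD\cap\{C\}=\emptyset\in F(\tau,X)$ by Lemma~\ref{Tangles:FilterIntersection}), which is case (i). If instead the block in $F(\tau,X)$ is $\cC_X(Y)$ for some $Y\in\crit(X)$, I distinguish whether some singleton $\{C\}$ with $C\in\cC_X(Y)$ lies in $F(\tau,X)$: if so, the same argument yields $F(\tau,X)=\lfloor\{C\}\rfloor_{\Pt{X}}$, case (i); if not, then for every $\cD\in F(\tau,X)$ the trace $\cD\cap\cC_X(Y)$ again lies in $F(\tau,X)$, is a subset of $\cC_X(Y)$ and hence finite or cofinite in it, and cannot be finite (else primeness over its partition into singletons would force a singleton of $\cC_X(Y)$ into $F(\tau,X)$), so it is cofinite; conversely every cofinite subset of $\cC_X(Y)$ is a finite intersection of the sets $\cC_X(Y)\setminus\{C\}$, each of which lies in $F(\tau,X)$. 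Hence $F(\tau,X)$ equals the up-closure in $\Pt{X}$ of the cofinite filter on $\cC_X(Y)$, case (ii).

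Finally, for the ``exactly one'' clause I would check the three incompatibilities, each reducing to the facts that $\cC_X(Y)$ is infinite for $Y\in\crit(X)$ and that distinct blocks $\cC_X(Y)$ are disjoint: the singleton $\{C\}$ lies in the case-(i) filter for that $C$ but in no case-(i) filter for a different component and in no case-(ii) filter, while $\cC_X(Y)$ lies in the case-(ii) filter for that $Y$ but in no case-(ii) filter for a different $Y$. The main obstacle I anticipate is not conceptual but a steady stream of tameness bookkeeping: one must keep verifying that each auxiliary set produced along the way ($\cC_X\setminus\cC$, pairwise intersections, the blocks, the sets $\cC_X(Y)\setminus\{C\}$) is tame, so that $\tau$ actually orients the associated separation and the filter arithmetic of Lemmas~\ref{Tangles:FilterIntersection} and~\ref{Tangles:Filter} is applicable; once that is granted, the statement is just the classification of ultrafilters on a finite product of finite and of finite--cofinite Boolean algebras.
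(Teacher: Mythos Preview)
Your argument is correct, but it takes a different route from the paper's. The paper argues directly from star-avoidance: after disposing of the principal case, it exhibits the finite star $\{\,s_{C\to X}\mid C\in\cC_X^-\,\}\cup\{\,s_{\cC_X(Y)\to X}\mid Y\in\crit(X)\,\}\in\Ft$ to force some $\cC_X(Z)\in F(\tau,X)$, and then a second, refined star to force every cofinite subset of $\cC_X(Z)$ into $F(\tau,X)$; the upper bound on $F(\tau,X)$ is obtained by a short tameness contradiction. Your proof instead observes once and for all that $\Pt{X}$ is a Boolean subalgebra of $2^{\cC_X}$ and that $F(\tau,X)$ is an \emph{ultrafilter} on it (because $\tau$, being an orientation, contains exactly one of $s_{X\to\cC}$ and $s_{\cC\to X}=s_{X\to\cC_X\setminus\cC}$), and then reduces everything to primeness and the product decomposition $\Pt{X}\cong 2^{\cC_X^-}\times\prod_{Y\in\crit(X)}\mathrm{FinCofin}(\cC_X(Y))$. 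No further stars are needed; Lemmas~\ref{Tangles:FilterIntersection} and~\ref{Tangles:Filter} together with the orientation property carry the whole load.

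What each approach buys: the paper's version keeps the proof inside the tangle calculus and makes visible exactly which stars of $\Ft$ are doing the work, which fits the surrounding narrative. Your version is more structural: it explains \emph{why} only these two filter types can occur (they are precisely the ultrafilters on a finite product of a finite Boolean algebra with finite--cofinite algebras), and it cleanly separates the sub-case where a singleton $\{C\}$ with $C\in\cC_X(Y)$ lies in $F(\tau,X)$, which the paper absorbs into its initial ``some $s_{X\to C}\in\tau$'' branch. The tameness bookkeeping you flag is real but routine, and your checks are sound.
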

\begin{proof}
We have seen in Lemma~\ref{Tangles:Filter} that $F(\tau,X)$ is a filter.
If $\tau$ contains $s_{X\to C}$ for some component $C$ of $G-X$, then (i) is the only possibility for our filter $F(\tau,X)$, so we may assume that no $s_{X\to C}$ is in $\tau$.
We recall that $\cC_X^-$ is the set of all components $C\in\cC_X$ that are not in $\cC_X(Y)$ for a critical $Y\in\crit(X)$.
The set 
\begin{align*}
\{\,s_{C\to X}\mid C\in\cC_X^-\,\}\cup\{\,s_{\cC_X(Y)\to X}\mid Y\in\crit(X)\,\}
\end{align*}
is a star in $\Ft$, so $\tau$ avoids it. 
Due to our assumption there is some $Z\in\crit(X)$ with $s_{X\to\cC_X(Z)}\in\tau$ witnessing that $\tau$ avoids this star, and this $Z$ is unique by consistency. 
Next, we verify that $F(\tau,X)$ includes the cofinite filter on $\cC_X(Z)$.
For this, let any cofinite subset $\cC$ of $\cC_X(Z)$ be given. As before, the set
\begin{align*}
\begin{array}{clcl}
& \{\,s_{C\to X}\mid C\in\cC_X^-\,\} & \cup & \big\{\,s_{\cC_X(Y)\to X}\mid Y\in\crit(X)\setminus\{Z\}\,\big\}\\
\cup & \{\,s_{C\to X}\mid C\in\cC_X(Z)\setminus\cC\,\} & \cup & \{\,s_{\cC\to X}\,\}
\end{array}
\end{align*}
is a star in $\Ft$ which $\tau$ must avoid. 
Since $\tau$ contains none of the $s_{X\to C}$ and none of the $s_{X\to \cC_X(Y)}$ for $Y\neq Z$ by the uniqueness of $Z$, it must contain $s_{X\to\cC}$, yielding $\cC\in F(\tau,X)$ as desired.
Thus $F(\tau,X)$ includes the cofinite filter on $\cC_X(Z)$.
Since $F(\tau,X)$ is a filter, it also includes the up-closure in $\Pt{X}$ of said cofinite filter.
If $F(\tau,X)$ is a proper superset of this up-closure, then this is witnessed by some $\cC\in F(\tau,X)$ with $\cC_X(Z)\setminus\cC$ infinite.
The separation $s_{X\to \cC}$ is tame, so $\cC\cap\cC_X(Z)$ must be finite.
But then $F(\tau, X)$ contains both $\cC_X(Z)\setminus\cC$ and $\cC\cap \cC_X(Z)$, so it also contains the empty set which is impossible.
Hence (ii) holds.
\end{proof}

This proposition already hints to the possibility of a connection between the $\aleph_0$-tangles of $\St$ and the ends plus the critical vertex sets of $G$.
Now we construct our inverse system: We take, for every $X\in\cX$, the set
\begin{align*}
\finC{X}:=\cC_X\sqcup\crit(X)
\end{align*}
and take the bonding maps $\eb_{X',X}\colon\finC{X'}\to\finC{X}$
\phantomsection
\label{eb_X'Xdef} 
for $X'\supseteq X$ which coincide with $\cb_{X',X}$ on $\cC_{X'}$; which are the identity on $\crit(X')\cap\crit(X)$; and which send each $Y\in\crit(X')\setminus\crit(X)$ to the unique component $C_X(Y)$ of $G-X$ meeting $Y$.
This completes the construction of the inverse system $\{\finC{X},\eb_{X',X},\cX\}$ whose inverse limit we denote by
\begin{align*}
\F:=\invLim{} (\,\finC{X}\mid X\in\cX\,).
\end{align*}

\begin{notation}
For every $Y\in\crit(G)$ we write $\langle Y\rangle$ for the limit $(\,p_X\mid X\in\cX\,)$ in $\F$ defined by setting $p_X=Y$ for all $X\in\lfloor Y\rfloor_{\cX}$ and $p_X=C_X(Y)$ otherwise.
\end{notation}

\begin{obs}\label{C=ends+crit}
The limits in $\F$ that are not ends are precisely the critical vertex sets, i.e. $\F=\Omega\sqcup\{\,\langle X\rangle\mid X\in\crit(G)\,\}$.\qed
\end{obs}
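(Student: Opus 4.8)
The plan is to unravel what a limit $(p_X \mid X \in \cX)$ in $\F$ looks like and to split into two cases according to whether it uses a genuine critical vertex set at some coordinate. First I would fix an arbitrary limit $p = (p_X \mid X \in \cX) \in \F$ and consider whether there is some $X \in \cX$ with $p_X \in \crit(X)$. If no such $X$ exists, then $p_X \in \cC_X$ for every $X$, and the compatibility condition $\eb_{X',X}(p_{X'}) = p_X$ reduces (since $\eb_{X',X}$ agrees with $\cb_{X',X}$ on $\cC_{X'}$) to $\cb_{X',X}(p_{X'}) = p_X$; hence $p$ is a direction of $G$, i.e.\ an element of $\invLim \cC_X = \Omega$ by Theorem~\ref{EndsAreDirections}. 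Conversely, any end viewed via its components $C(X,\omega)$ is a limit in $\F$ all of whose coordinates lie in the $\cC_X$, so $\Omega \subseteq \F$.

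Now suppose some coordinate $p_{X_0} = Y \in \crit(X_0)$, in particular $Y \in \crit(G)$. I would first argue that $Y$ itself must appear, i.e.\ $p_Y = Y$: applying compatibility along $Y \subseteq X_0$ (wait---we need $Y \subseteq X_0$, which holds since $Y \in \crit(X_0) \subseteq 2^{X_0}$), the map $\eb_{X_0,Y}$ sends $Y \in \crit(X_0)$ to $p_Y$, and I would check that $\eb_{X_0, Y}(Y) = Y$ because $Y \in \crit(X_0) \cap \crit(Y)$ (the latter since $\cC_Y(Y) \supseteq \cC_{X_0}(Y)$ up to the finitely many components meeting $X_0$, hence is infinite), so $\eb_{X_0,Y}$ is the identity on $Y$. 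Thus $p_Y = Y$. Next, for any $X \supseteq Y$, compatibility along $Y \subseteq X \cup Y = X$ forces $p_X = \eb_{?}$... more carefully: using $Y \subseteq X$ and the map $\eb_{X,Y}\colon \finC{X} \to \finC{Y}$ with $\eb_{X,Y}(p_X) = p_Y = Y$, and noting that the only element of $\finC{X}$ mapping to $Y \in \crit(Y)$ under $\eb_{X,Y}$ is $Y$ itself (a component $C \in \cC_X$ maps into $\cC_Y$, never to the formal point $Y$), we get $p_X = Y$ for all $X \supseteq Y$. Finally, for $X \not\supseteq Y$, compatibility along $X \subseteq X \cup Y$ together with $p_{X \cup Y} = Y$ gives $p_X = \eb_{X\cup Y, X}(Y) = C_X(Y)$ by definition of the bonding map on $\crit(X\cup Y) \setminus \crit(X)$. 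Hence $p = \langle Y \rangle$. Conversely I would verify directly from the definitions that each $\langle Y \rangle$ is a genuine element of $\F$ (checking the compatibility condition on the three types of pairs $X \subseteq X'$: both $\supseteq Y$, both $\not\supseteq Y$, and $X \not\supseteq Y \subseteq X'$), which is a routine unwinding of the definition of $\eb_{X',X}$ and of $C_X(Y)$ as the unique component of $G-X$ meeting $Y$. This establishes $\F = \Omega \sqcup \{\langle X\rangle \mid X \in \crit(G)\}$, and the union is disjoint since every coordinate of $\langle Y\rangle$ with index $\supseteq Y$ equals the formal point $Y \notin \cC_X$, whereas ends have all coordinates in the $\cC_X$; distinctness of $\langle Y\rangle$ and $\langle Y'\rangle$ for $Y \neq Y'$ follows by evaluating at $X = Y \cup Y'$.

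The main obstacle, I expect, is the bookkeeping in showing $p_X = Y$ for \emph{all} $X \supseteq Y$ rather than just $X = X_0$: one has to be careful that the formal point $Y \in \crit(X)$ is indeed in the preimage $\eb_{X,Y}^{-1}(Y)$ and is the \emph{only} such preimage, which uses that $Y$ stays critical in every superset (so that $Y \in \crit(X)$ makes sense) and that $\eb_{X,Y}$ is the identity on $\crit(X) \cap \crit(Y)$. A secondary subtlety is making sure $C_X(Y)$ is well-defined for every $X \not\supseteq Y$, i.e.\ that there really is a \emph{unique} component of $G - X$ meeting $Y$; but this was already recorded in the excerpt just before the definition of $C_X(Y)$ (infinitely many independent paths between vertices of a critical $Y$ force $Y \setminus X$ into a single component), so it may be cited rather than reproven. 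Everything else is a direct translation between the language of directions/ends and the explicit description of the bonding maps $\eb_{X',X}$.
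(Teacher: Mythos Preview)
Your argument is correct; the paper states this result as an Observation with an immediate \qed\ and gives no proof at all, so there is nothing to compare against---you have correctly supplied the details the authors left implicit. One small remark: in the step ``the only element of $\finC{X}$ mapping to $Y$ under $\eb_{X,Y}$ is $Y$ itself'' your parenthetical only rules out components $C \in \cC_X$, but you should also note (as you implicitly need) that any other $Z \in \crit(X)$ either lies in $\crit(Y)$ and is fixed (hence $Z \neq Y$), or lies in $\crit(X) \setminus \crit(Y)$ and is sent to a component $C_Y(Z) \in \cC_Y$; this is immediate but worth stating.
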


In order to link the $\aleph_0$-tangles of $\St$ to the limits of this inverse system, we define maps $\toC{}{X}\colon\Tt\to\finC{X}$, one for each $X\in\cX$, 
by letting them send every $\tau\in\Tt$ to the unique $C\in\cC_X$ or $Y\in\crit(X)$ given by Proposition~\ref{Tangles:F(T,X)}.
Once we have shown two technical lemmas, we shall see that these maps are compatible with the bonding maps of the inverse system, so they combine to a bijection between $\Tt$ and the inverse limit $\F=\invLim\finC{X}$.

\begin{lemma}\label{Tangles:S'Closed}
Let $\{A,B\}\in \St$ and $\{C,D\}\in S$ be such that both $A\triangle C$ and $B\triangle D$ are finite. Then $\{C,D\}\in \St$.
\end{lemma}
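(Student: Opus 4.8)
The plan is to push both separations onto a common larger separator and then show that tameness is stable under finite changes of the separator. First I would set $X=A\cap B$ and $X'=C\cap D$ and observe that $X\triangle X'\subseteq(A\triangle C)\cup(B\triangle D)$ is finite (if $v\in A\cap B$ lies outside $C\cap D$ then $v\in A\setminus C$ or $v\in B\setminus D$, and symmetrically), so $X'':=X\cup X'\in\cX$ differs from each of $X$ and $X'$ by a finite set. For any finite-order separation $\{P,Q\}$ with separator $W$ and any finite $W^+\supseteq W$, write $\pi_{W^+}(P):=\{\,E\in\cC_{W^+}\mid V(E)\subseteq P\,\}$; since $V(E)$ misses $W\supseteq P\cap Q$ and $G$ has no edge between $P\setminus Q$ and $Q\setminus P$, every component of $G-W^+$ lies wholly on one side, so $\{\pi_{W^+}(P),\pi_{W^+}(Q)\}$ is a bipartition of $\cC_{W^+}$, and it is the usual bipartition corresponding to $\{P,Q\}$ when $W^+=W$. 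Lifting $\{A,B\}$ and $\{C,D\}$ to $\cC_{X''}$ this way, I would check that $\pi_{X''}(A)\triangle\pi_{X''}(C)$ is finite: a component $E$ of $G-X''$ disjoint from the finite set $B\triangle D$ satisfies $V(E)\cap B=V(E)\cap D$, whence $E\in\pi_{X''}(A)\Leftrightarrow V(E)\cap B=\emptyset\Leftrightarrow V(E)\cap D=\emptyset\Leftrightarrow E\in\pi_{X''}(C)$, and only finitely many components of $G-X''$ meet $B\triangle D$. Moving finitely many components between the two sides of a bipartition of $\cC_{X''}$ cannot change, for any $Y\in\crit(X'')$, whether $\cC_{X''}(Y)$ meets a given side in an infinite set; hence such a move preserves tameness.

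Everything then reduces to the following claim: \emph{for a finite-order separation $\{P,Q\}$ with separator $W$ and any finite $W^+\supseteq W$, the separation $\{P,Q\}$ is tame if and only if the bipartition $\{\pi_{W^+}(P),\pi_{W^+}(Q)\}$ of $\cC_{W^+}$ is tame with respect to $W^+$.} Granting both directions, the lemma drops out: $\{A,B\}\in\St$ gives, by the forward direction with $W=X$, that $\{\pi_{X''}(A),\pi_{X''}(B)\}$ is tame; by the first paragraph $\{\pi_{X''}(C),\pi_{X''}(D)\}$ is then tame; and by the backward direction with $W=X'$ this means exactly $\{C,D\}\in\St$.

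To prove the claim I would reduce, by a short induction, to the case $W^+=W\cup\{z\}$ with $z\notin W$, or argue directly with the finite set $Z:=W^+\setminus W$. Deleting $Z$ from $G-W$ only breaks up the finitely many components $F_1,\dots,F_k$ of $G-W$ that meet $Z$: each other component of $G-W$ is still a component of $G-W^+$ with its neighbourhood (which lies in $W$) unchanged, whereas a component $E$ of $G-W^+$ contained in some $F_i$ has $N(E)\cap Z\neq\emptyset$ (else $E$ would be a component of the connected graph $F_i$), and, crucially, all components of $G-W^+$ with a common neighbourhood $Y'\not\subseteq W$ lie inside one single $F_i$, the one meeting $Y'$. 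Since $V(F_i)$ lies wholly in $P$ or wholly in $Q$, all those components lie on one common side of $\{\pi_{W^+}(P),\pi_{W^+}(Q)\}$, so no critical $Y'\not\subseteq W$ can witness its non-tameness. For $Y'\subseteq W$ one has $\cC_{W^+}(Y')=\cC_W(Y')\setminus\{F_1,\dots,F_k\}$ and $\cC_{W^+}(Y')\cap\pi_{W^+}(P)=\big(\cC_W(Y')\cap\pi_W(P)\big)\setminus\{F_1,\dots,F_k\}$, each differing from its $W$-counterpart by at most $k$ elements; hence $Y'\in\crit(W^+)\Leftrightarrow Y'\in\crit(W)$, and $Y'$ witnesses non-tameness over $W^+$ exactly when it does over $W$. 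Combining the two cases proves the claim. The one genuinely delicate point I anticipate is this bookkeeping around the \emph{new} critical sets containing vertices of $W^+\setminus W$: a priori such a set could split an old component evenly across $\{P,Q\}$ and destroy tameness, and what rescues the argument is precisely that all the small pieces it collects are confined to a single old component $F_i$ and therefore sit on one side of $\{P,Q\}$; carefully verifying this confinement, together with the invariance of neighbourhoods for components untouched by $Z$, is where the real work lies.
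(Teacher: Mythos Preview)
Your argument is correct and takes a genuinely different route from the paper's. The paper argues directly by contradiction: assuming $\{C,D\}$ fails to be tame via some $Y\in\crit(C\cap D)$, it takes the two infinite families from $\cC_{C\cap D}(Y)$ lying on the two sides of $\{C,D\}$, trims off the finitely many members meeting $A\cap B$ or landing on the wrong side of $\{A,B\}$, and then splits into two cases. If $Y\in\crit(A\cap B)$, the two surviving infinite families directly witness $\{A,B\}\notin\St$; if $Y\notin\crit(A\cap B)$, both families would have to lie inside the single component $C_{A\cap B}(Y)$, which is impossible since that component sits wholly on one side of $\{A,B\}$. You instead pass to the common separator $X''=(A\cap B)\cup(C\cap D)$, establish as a standalone lemma that tameness of a separation is unchanged when one enlarges its separator, and finish by observing that the two lifted bipartitions of $\cC_{X''}$ differ by only finitely many components. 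The paper's argument is shorter and tailored to the statement; yours isolates a reusable structural fact (tameness is stable under both refining the separator and finite perturbation of the bipartition), and the same dichotomy appears inside your lemma: your observation that for $Y'\not\subseteq W$ all of $\cC_{W^+}(Y')$ is confined to a single old component $F_i$ and hence to one side of $\{P,Q\}$ is precisely the analogue of the paper's case $Y\notin\crit(A\cap B)$.
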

\begin{proof}
We assume for a contradiction that the separation $\{C,D\}$ is not in $\St$, witnessed by some $Y\in\crit(C\cap D)$.
Let $\{\cC,\cD\}$ be the bipartition of $\cC_{C\cap D}(Y)$ with $V[\cC]\subseteq C\setminus D$ and $V[\cD]\subseteq D\setminus C$.
By choice of $Y$, both $\cC$ and $\cD$ are infinite.
Next, we put
\begin{align*}
\cC'=\{\,K\in\cC\mid K\cap A\cap B=\emptyset\,\}\text{ and }\cD'=\{\,K\in\cD\mid K\cap A\cap B=\emptyset\,\},
\end{align*}
and observe that both $\cC\setminus\cC'$ and $\cD\setminus\cD'$ are finite since so is $A\cap B$.
By choice of $\{C,D\}$ we know that all but finitely many of the components in $\cC'$ are included in $G[A\setminus B]$ and all but finitely many of the components in $\cD'$ are included in $G[B\setminus A]$.
We write $\cC''$ and $\cD''$ for the infinite collections of these components, respectively.
Finally, we check two cases and derive a contradiction for both:

First, if $Y\notin\crit(A\cap B)$ then the component $C_{A\cap B}(Y)$ of $G-(A\cap B)$ includes both $\bigcup\cC''$ and $\bigcup\cD''$.
But exactly one of $G[A\setminus B]$ and $G[B\setminus A]$ includes $C_{A\cap B}(Y)$, contradicting $\bigcup\cC''\subseteq G[A\setminus B]$ and $\bigcup\cD''\subseteq G[B\setminus A]$ as desired.

Second, if $Y\in\crit(A\cap B)$, then $Y$ together with $\cC''$ and $\cD''$ 
witness $\{A,B\}\notin \St$, contradicting our assumptions.
\end{proof}

\begin{lemma}\label{Tangles:S'ClosedUnderFinDif}
Given $\tau\in\Tt$ and $(A,B)\in\tau$, if $(A',B')\in\vS$ is such that both $A\triangle A'$ and $B\triangle B'$ are finite, then $(A',B')\in\tau$.
\end{lemma}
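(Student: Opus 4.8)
The plan is to argue by contradiction. By Lemma~\ref{Tangles:S'Closed} the separation $\{A',B'\}$ lies in $\St$, so $\tau$ contains exactly one of $(A',B')$ and $(B',A')$; I would assume for a contradiction that $(B',A')\in\tau$. The guiding idea is to push both $(A,B)$ and $(B',A')$ ``up to a common, larger separator'': set $Z=(A\cap B)\cup(A'\cap B')\cup(A\triangle A')\cup(B\triangle B')$, a finite set containing both $A\cap B$ and $A'\cap B'$, and note that from $A\triangle A'\subseteq Z$ and $B\triangle B'\subseteq Z$ one gets $A\cup Z=A'\cup Z$ and $B\cup Z=B'\cup Z$.

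The technical core is the claim that whenever $(P,Q)\in\tau$ with $P\cap Q\subseteq Z$, then also $(P\cup Z,Q\cup Z)\in\tau$; note this last separation again lies in $\St$ by Lemma~\ref{Tangles:S'Closed}, since it differs from $\{P,Q\}$ only by a finite set. Granting the claim, applying it once to $(A,B)\in\tau$ and once to $(B',A')\in\tau$ yields $(A\cup Z,B\cup Z)\in\tau$ and $(B'\cup Z,A'\cup Z)\in\tau$, which by the displayed identities are the two opposite orientations of one and the same separation of $\St$ --- contradicting that $\tau$ is an orientation.

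To prove the claim I would enlarge the separator in two steps. Since $P\cup Q=V$, the set $Z$ splits as the disjoint union $(P\cap Q)\cup Z_Q\cup Z_P$ with $Z_Q=Z\cap(Q\setminus P)$ and $Z_P=Z\cap(P\setminus Q)$. In the first step I move $Z_Q$ into the separator: $(P\cup Z_Q,Q)$ is a separation in $\St$ (finite symmetric difference with $\{P,Q\}$, so Lemma~\ref{Tangles:S'Closed} applies) with $(P,Q)\le(P\cup Z_Q,Q)$, so if $\tau$ oriented it the other way then $\{(P,Q),(Q,P\cup Z_Q)\}$ would be a two-element star of finite interior $Q\cap(P\cup Z_Q)=(P\cap Q)\cup Z_Q$ contained in $\tau$, contradicting that $\tau$ avoids $\Ft$; hence $(P\cup Z_Q,Q)\in\tau$. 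In the second step I move $Z_P$ into the separator on the big side: $(P\cup Z_Q,Q\cup Z_P)\in\St$ for the same reason, and since $(Q,P\cup Z_Q)<(Q\cup Z_P,P\cup Z_Q)$, if $\tau$ contained $(Q\cup Z_P,P\cup Z_Q)$ then it would contain two separations pointing away from each other, contradicting consistency; hence $(P\cup Z_Q,Q\cup Z_P)\in\tau$. A one-line set computation identifies $P\cup Z_Q=P\cup Z$ and $Q\cup Z_P=Q\cup Z$, which completes the claim. The degenerate cases where $Z_Q$ or $Z_P$ is already absorbed (so one step is vacuous) are handled on the spot.

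I do not expect a genuine obstacle: everything follows from consistency, $\Ft$-avoidance, and the already-established Lemma~\ref{Tangles:S'Closed}, with no recourse to the inverse-limit description or to Proposition~\ref{Tangles:F(T,X)}. The one thing to watch is bookkeeping --- checking that each auxiliary pair of sets really is a finite-order separation, that the two-element configuration in the first step genuinely is a star of finite interior (so $\Ft$-avoidance applies) while the configuration in the second step genuinely exhibits two \emph{distinct} separations pointing away from each other (so consistency applies), and that every separation produced along the way still differs from $\{A,B\}$ by a finite set, so that Lemma~\ref{Tangles:S'Closed} keeps us inside $\St$ at each stage and $\tau$ does orient it.
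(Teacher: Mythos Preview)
Your proof is correct. The bookkeeping all checks out: each auxiliary pair is indeed a finite-order separation, Lemma~\ref{Tangles:S'Closed} keeps you inside $\St$ throughout, the two-element star in step~1 has finite interior $(P\cap Q)\cup Z_Q$ so $\Ft$-avoidance applies, and in step~2 the strict inequality $(Q,P\cup Z_Q)<(Q\cup Z_P,P\cup Z_Q)$ (for $Z_P\neq\emptyset$) gives a genuine consistency violation. The only edge case you did not make explicit is that $A\cup Z=B\cup Z$ would force $V\subseteq Z$, which cannot happen since $G$ is infinite; this is harmless.

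That said, your route is noticeably longer than the paper's. The paper avoids both the auxiliary set $Z$ and the contradiction framework: it simply observes that $\{A,B\cup B'\}$ and $\{A\cup A',B'\}$ lie in $\St$ by Lemma~\ref{Tangles:S'Closed}, uses consistency once to get $(A,B\cup B')\in\tau$ from $(A,B\cup B')\le(A,B)$, then uses $\Ft$-avoidance on the single two-element star $\{(A,B\cup B'),(B',A\cup A')\}$ to force $(A\cup A',B')\in\tau$, and finally consistency again with $(A',B')\le(A\cup A',B')$ gives the conclusion directly. So both arguments use exactly the same two tools (consistency and a two-element star in $\Ft$), but the paper applies them to the ``natural'' intermediates $A\cup A'$ and $B\cup B'$ rather than first symmetrising via a common separator. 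Your claim about pushing $(P,Q)$ up to $(P\cup Z,Q\cup Z)$ is a clean modular statement, but for this particular lemma it costs an extra step.
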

\begin{proof}[{Proof (adapted from {\cite[Lemma 1.10]{EndsAndTangles}})}]
By Lemma~\ref{Tangles:S'Closed} all of the three separations $\{A',B'\}$, $\{A\cup A',B'\}$ and $\{A,B\cup B'\}$ are in $\St$.
It suffices to show $(A\cup A',B')\in\tau$, since then $(A',B')\in\tau$ follows from $(A',B')\le (A\cup A',B')\in\tau$ and $\tau$ being consistent.
Due to $(A,B\cup B')\le (A,B)\in\tau$ the consistency of $\tau$ implies that $(A,B\cup B')\in\tau$. 
Now the set $\{(A,B\cup B'),(B',A\cup A')\}$ is a star in $\Ft$, so the only possibility for $\{A\cup A',B'\}$ to be oriented by $\tau$ is $(A\cup A',B')\in\tau$.
\end{proof}

\begin{lemma}\label{Tangles:diagCommutes}
The diagram
\begin{equation*}
\begin{tikzcd}[column sep=1.5em]
& \Tt\arrow[dr,"\toC{}{X'}"]\arrow[dl,"\toC{}{X}"']\\
\finC{X} && \finC{X'}\arrow[ll,"\eb_{X',X}"']
\end{tikzcd}
\end{equation*}
commutes for all $X\subseteq X'\in\cX$.
\end{lemma}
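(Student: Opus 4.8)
The plan is to fix $\tau\in\Tt$ together with $X\subseteq X'$ in $\cX$ and to show $\eb_{X',X}(\varphi_{X'}(\tau))=\varphi_X(\tau)$ by chasing through the three possibilities for $\varphi_{X'}(\tau)$ provided by Proposition~\ref{Tangles:F(T,X)}: that it is a component $C'\in\cC_{X'}$; a critical set $Y'\in\crit(X')$ with $Y'\subseteq X$; or a critical set $Y'\in\crit(X')$ with $Y'\not\subseteq X$. In each case the idea is to read off the relevant ``big side'' of $\tau$ at $X'$ as a tame separation $s_{X'\to\cK}\in\tau$, to relate $\cK$ to a collection $\cC$ of components of $G-X$ whose symmetric difference with $\cK$ is controlled by the finite set $X'\setminus X$, and then to pass from $s_{X'\to\cK}\in\tau$ to the desired membership $s_{X\to\cC}\in\tau$ using Lemma~\ref{Tangles:S'ClosedUnderFinDif} (insensitivity of $\aleph_0$-tangles of $\St$ to finite changes) together with consistency of $\tau$. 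The recurring structural input I would isolate first as a small observation is that a critical class $\cC_{X'}(Z')$ of $G-X'$ cannot be split into two infinite parts by refining $X'$ to $X$: if $Z'\not\subseteq X$ then all members of $\cC_{X'}(Z')$ attach to a common vertex of $Z'\setminus X$ and hence lie in the single component $C_X(Z')$ of $G-X$, while if $Z'\subseteq X$ then each member of $\cC_{X'}(Z')$ is already a component of $G-X$, so $\cC_{X'}(Z')$ meets any fixed component of $G-X$ in at most one element.

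For the component case I would set $C:=\cb_{X',X}(C')$ and $\cC':=\cb_{X',X}^{-1}(C)$, the components of $G-X'$ lying inside $C$. By the observation above $\cC'$ is tame, so $s_{X'\to\cC'}\in\St$; and since deleting $X'\setminus X$ from the component $C$ of $G-X$ leaves exactly the members of $\cC'$, one has $V[C]=V[\cC']\sqcup(V[C]\cap(X'\setminus X))$, so the tame separations $s_{X\to C}$ and $s_{X'\to\cC'}$ differ only by finite sets. Applying Lemma~\ref{Tangles:S'ClosedUnderFinDif} in both directions yields $s_{X\to C}\in\tau$ if and only if $s_{X'\to\cC'}\in\tau$. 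Now $s_{X'\to C'}\in\tau$ and $s_{X'\to\cC'}\le s_{X'\to C'}$, so consistency of $\tau$ forces $s_{X'\to\cC'}\in\tau$, hence $s_{X\to C}\in\tau$; arguing as in Proposition~\ref{Tangles:F(T,X)} this makes $F(\tau,X)$ the principal filter $\lfloor\{C\}\rfloor_{\Pt{X}}$, i.e.\ $\varphi_X(\tau)=C=\cb_{X',X}(C')=\eb_{X',X}(\varphi_{X'}(\tau))$.

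The two critical cases run along the same lines. If $\varphi_{X'}(\tau)=Y'$ with $Y'\not\subseteq X$, then $s_{X'\to\cC_{X'}(Y')}\in\tau$ and, by the observation, $\cC_{X'}(Y')\subseteq\cC':=\cb_{X',X}^{-1}(C_X(Y'))$; as in the component case $s_{X\to C_X(Y')}\in\tau$ iff $s_{X'\to\cC'}\in\tau$, and $s_{X'\to\cC'}\in\tau$ follows from $s_{X'\to\cC_{X'}(Y')}\in\tau$ by consistency (using $s_{X'\to\cC'}\le s_{X'\to\cC_{X'}(Y')}$), so $\varphi_X(\tau)=C_X(Y')=\eb_{X',X}(Y')$. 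If instead $Y'\subseteq X$, then every member of $\cC_{X'}(Y')$ is already a component of $G-X$ and $\cC_{X'}(Y')$ is a cofinite subset of $\cC_X(Y')$; for such collections $s_{X'\to\cD}$ and $s_{X\to\cD}$ have the same small side and big sides differing only inside $X'\setminus X$, so Lemma~\ref{Tangles:S'ClosedUnderFinDif} transfers $s_{X'\to\cD}\in\tau$ to $s_{X\to\cD}\in\tau$ for every cofinite $\cD\subseteq\cC_{X'}(Y')$. Together with consistency this places $\cC_X(Y')$ and each of its cofinite subsets into $F(\tau,X)$, which rules out $F(\tau,X)$ being principal; so by Proposition~\ref{Tangles:F(T,X)} we get $\varphi_X(\tau)\in\crit(X)$, and $\cC_X(Y')\in F(\tau,X)$ forces $\varphi_X(\tau)=Y'=\eb_{X',X}(Y')$ (here $Y'\in\crit(X)\cap\crit(X')$).

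The hard part will not be the consistency-chasing, which is by now routine (compare Lemmas~\ref{Tangles:FilterIntersection} and~\ref{Tangles:S'ClosedUnderFinDif} and the proof of Proposition~\ref{Tangles:F(T,X)}), but the tameness bookkeeping: one must verify that every auxiliary separation fed into Lemma~\ref{Tangles:S'ClosedUnderFinDif} genuinely lies in $\St$, equivalently that the collections $\cC'$, $\cC_{X'}(Y')$ and their cofinite modifications are tame as subsets of the appropriate $\cC_X$ or $\cC_{X'}$. This is exactly where the dichotomy for critical classes under refinement ($Z'\not\subseteq X$ versus $Z'\subseteq X$) is used, and I would state and prove that dichotomy as a separate preliminary claim before entering the case analysis so that each of the three cases can invoke it cleanly.
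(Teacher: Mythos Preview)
Your proposal is correct and follows essentially the same approach as the paper: in each case you produce a tame separation at $X'$ in $\tau$, relate it to one at $X$ whose sides differ only finitely, and transfer membership via Lemma~\ref{Tangles:S'ClosedUnderFinDif} together with consistency. The organisational differences are minor: the paper case-splits on the \emph{image} $\xi=\eb_{X',X}(\varphi_{X'}(\tau))$ (two cases, the ``component'' case absorbing both of your first and third cases uniformly by observing $\cC'=\cb_{X',X}^{-1}(C)\in F(\tau,X')$ regardless of whether $\varphi_{X'}(\tau)$ is a component or a critical set), argues the critical case by contradiction, and obtains tameness of $\cC'$ directly from Lemma~\ref{Tangles:S'Closed} rather than from your dichotomy observation.
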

\begin{proof}
Given $\tau\in\Tt$ we put $\xi=\eb_{X',X}(\toC{\tau}{X'})$ and check two cases: 

For the first case we assume that $\xi$ is a component $C$ of $G-X$, and we put $\cC'=\cb_{X',X}^{-1}(C)$.
It suffices to show $s_{X'\to\cC'}\in\tau$, since then Lemma~\ref{Tangles:S'ClosedUnderFinDif} yields $s_{X\to C}\in\tau$ so $\{C\}\in F(\tau,X)$ implies $\toC{\tau}{X}=C$ as desired.
For this, we first note that Lemma~\ref{Tangles:S'Closed} and $s_{X\to C}\in\vSt$ ensure $s_{X'\to\cC'}\in\vSt$.
Next, we claim that $\cC'\in F(\tau, X')$ holds:
Indeed, if $\toC{\tau}{X'}$ is a component of $G-X'$ then by definition of $\eb_{X',X}$ it must be a component in $\cC'$ and $\cC'\in F(\tau,X')$ follows.
And otherwise $\toC{\tau}{X'}$ is some critical $Y\in\crit(X')$ with $C_X(Y)=C$, so $\bigcup\cC_{X'}(Y)\subseteq C_X(Y)$ implies $\cC_{X'}(Y)\subseteq\cC'$, also resulting in $\cC'\in F(\tau,X')$.
Thus we have $\cC'\in F(\tau,X')$ which is tantamount to $s_{X'\to\cC'}\in\tau$.

For the second case we assume that $\xi$ is a critical vertex set $Y\in\crit(X)$. Then $\toC{\tau}{X'}=Y$ follows. We assume for a contradiction that $Y$ is distinct from $\toC{\tau}{X}$.
By definition of $\toC{\tau}{X}$ we find some cofinite subset $\cC$ of $\cC_X(Y)$ with $\cC\notin F(\tau,X)$.
To yield a contradiction, it suffices to show $s_{X\to\cC}\in\tau$.
For this, set $\cC'=\cb_{X',X}^{-1}(\cC)$. 
Then $\toC{\tau}{X'}=Y$ yields $\cC'\in F(\tau,X')$ since $\cC'$ is a cofinite subset of $\cC_{X'}(Y)$.
In particular, we have $s_{X'\to\cC'}\in\tau$ which implies $s_{X\to\cC}\in\tau$ by Lemma~\ref{Tangles:S'ClosedUnderFinDif}.
\end{proof}

\begin{theorem}
\label{OmegaAndCritAreTangles}
Let $G$ be any graph. 
The $\aleph_0$-tangles of $\St$ are precisely the limits of the inverse system $\{\finC{X},\eb_{X',X},\cX\}$, which in turn are precisely the ends and critical vertex sets of $G$, i.e. $\Tt=\F=\Omega\sqcup\crit(G)$.
\end{theorem}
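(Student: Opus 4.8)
The plan is to assemble the pieces already in place. By Lemma~\ref{Tangles:diagCommutes} the maps $\toC{}{X}\colon\Tt\to\finC{X}$ are compatible with the bonding maps $\eb_{X',X}$, so they combine into a single map $\Phi\colon\Tt\to\F$, $\tau\mapsto(\toC{\tau}{X})_{X\in\cX}$. I would show that $\Phi$ is a bijection and then read off $\Tt=\F=\Omega\sqcup\crit(G)$ from Observation~\ref{C=ends+crit}, identifying $\langle X\rangle$ with $X$.

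For injectivity, the point is that a tangle $\tau\in\Tt$ is recovered from the family $(F(\tau,X))_{X\in\cX}$: every tame separation $\{A,B\}$ has both of its orientations of the form $s_{X\to\cC}$ with $X=A\cap B$ and $\cC\subseteq\cC_X$ tame, and conversely $s_{X\to\cC}$ is tame as soon as $\cC$ is (because $\cC$ and $\cC_X\setminus\cC$ are tame simultaneously), so $s_{X\to\cC}\in\tau$ if and only if $\cC\in F(\tau,X)$. By Proposition~\ref{Tangles:F(T,X)} the filter $F(\tau,X)$ is in turn completely determined by $\toC{\tau}{X}$: it equals $\lfloor\{C\}\rfloor_{\Pt{X}}$ when $\toC{\tau}{X}$ is a component $C$, and the up-closure in $\Pt{X}$ of the cofinite filter on $\cC_X(Y)$ when $\toC{\tau}{X}$ is a critical set $Y\in\crit(X)$. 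Hence $\Phi(\tau)=\Phi(\tau')$ forces $\tau=\tau'$.

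For surjectivity, let $p=(p_X)_{X\in\cX}$ be a limit in $\F$. By Observation~\ref{C=ends+crit} either $p$ is an end $\omega$, with $p_X=C(X,\omega)$ for all $X$, or $p=\langle X_0\rangle$ for some $X_0\in\crit(G)$. In the first case I would take $\tau:=\tau_\omega\cap\vSt$: as the restriction to $\vSt$ of a consistent orientation of $S$ avoiding $\cT\supseteq\Ft$ it lies in $\Tt$, and unwinding definitions gives $F(\tau,X)=\{\,\cC\in\Pt{X}\mid C(X,\omega)\in\cC\,\}=\lfloor\{C(X,\omega)\}\rfloor_{\Pt{X}}$, so $\toC{\tau}{X}=C(X,\omega)=p_X$. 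In the second case I would pick a free ultrafilter on the infinite set $\cC_{X_0}(X_0)$, extend it by Lemma~\ref{UF:criticalXinfiniteCexistsUF} to an ultrafilter tangle $\tau^*$ with $X_{\tau^*}=X_0$, and set $\tau:=\tau^*\cap\vSt\in\Tt$ (again a restriction of a tangle avoiding $\cT_{<\aleph_0}\supseteq\Ft$); then $F(\tau,X)=U(\tau^*,X)\cap\Pt{X}$. For $X\supseteq X_0$ we have $X\in\cX_{\tau^*}$, so $U(\tau^*,X)$ is free and contains $\cC_X(X_0)$ by Corollary~\ref{UF:criticalComponentsLiveInUFgeneralised}; a brief ultrafilter argument (cofinite subsets of $\cC_X(X_0)$ lie in a free ultrafilter containing $\cC_X(X_0)$; conversely a tame member of that ultrafilter meets $\cC_X(X_0)$ in an infinite, hence cofinite-in-$\cC_X(X_0)$, set) then identifies $F(\tau,X)$ with the up-closure in $\Pt{X}$ of the cofinite filter on $\cC_X(X_0)$, so $\toC{\tau}{X}=X_0=p_X$. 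For $X\not\supseteq X_0$ we have $X\notin\cX_{\tau^*}$, so $U(\tau^*,X)$ is generated by $\{C_X(X_0)\}$ by Lemma~\ref{UF:CXY}, whence $F(\tau,X)=\lfloor\{C_X(X_0)\}\rfloor_{\Pt{X}}$ and $\toC{\tau}{X}=C_X(X_0)=p_X$. Thus $\Phi(\tau)=p$ in every case, and $\Phi$ is onto.

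I expect the main obstacle to be this last case $p=\langle X_0\rangle$: one has to combine the structural facts about ultrafilter tangles from Section~\ref{sec4} --- that $\cC_{X_\tau}(X_\tau)$ lives in $U(\tau,X_\tau)$ and, more generally, $\cC_X(X_\tau)\in U(\tau,X)$ for $X\in\cX_\tau$ (Lemma~\ref{UF:criticalComponentsLiveInUF}, Corollary~\ref{UF:criticalComponentsLiveInUFgeneralised}), that every free ultrafilter on $\cC_X(Y)$ extends to an ultrafilter tangle with $X_\tau=Y$ (Lemma~\ref{UF:criticalXinfiniteCexistsUF}), and the behaviour of $U(\tau,X)$ for $X$ not above $X_\tau$ (Lemma~\ref{UF:CXY}) --- into exactly the two shapes of filters permitted by Proposition~\ref{Tangles:F(T,X)}, all the while keeping track of which tame sets survive the restriction to $\vSt$. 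The other steps --- well-definedness of $\Phi$, injectivity, and the end case --- amount to bookkeeping with the separations $s_{X\to\cC}$ and should be routine.
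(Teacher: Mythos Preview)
Your proposal is correct and follows essentially the same route as the paper: define $\Phi$ via Lemma~\ref{Tangles:diagCommutes}, establish injectivity by recovering $\tau$ from the filters $F(\tau,X)$ determined by $\toC{\tau}{X}$, and for surjectivity restrict an end-tangle or an ultrafilter tangle (obtained via Lemma~\ref{UF:criticalXinfiniteCexistsUF}, equivalently Theorem~\ref{critX=critUF}) to $\vSt$, invoking Corollary~\ref{UF:criticalComponentsLiveInUFgeneralised} and Lemma~\ref{UF:CXY} exactly as the paper does (your case split $X\supseteq X_0$ versus $X\not\supseteq X_0$ is the same as the paper's $X\in\cX_\tau$ versus $X\notin\cX_\tau$, since $\cX_\tau=\lfloor X_\tau\rfloor_\cX$). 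The only difference is cosmetic: you spell out the ``brief ultrafilter argument'' identifying $F(\tau,X)$ in the critical case, whereas the paper leaves this implicit after citing Corollary~\ref{UF:criticalComponentsLiveInUFgeneralised}.
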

\begin{proof}
We already noted $\F=\Omega\sqcup\crit(G)$ in Observation~\ref{C=ends+crit}.
The map 
\begin{align*}
\tau\mapsto (\,\toC{\tau}{X}\mid X\in\cX\,)
\end{align*}
from $\Tt$ to $\F$ is well-defined by Lemma~\ref{Tangles:diagCommutes}, and it is injective by definition: If $\tau$ and $\tau'$ are distinct tangles in $\Tt$, then this is witnessed by some separation $\{A,B\}$ with $(A,B)\in\tau\smallsetminus\tau'$ and $(B,A)\in\tau'\smallsetminus\tau$, so $F(\tau,A\cap B)$ and $F(\tau',A\cap B)$ are also distinct, causing $\toC{\tau}{A\cap B}\neq\toC{\tau'}{A\cap B}$.
Hence it remains to verify surjectivity. 

For this, let any $\xi\in\F$ be given.
If $\xi$ is an end $\omega$ of $G$, then $\tau_\omega\cap\vSt$ (here, $\tau_\omega$ is the $\aleph_0$-tangle of $S\supseteq \St$ induced by $\omega$) gets mapped to $\xi$.
Otherwise $\xi$ is of the form $\langle Y\rangle$ by Observation~\ref{C=ends+crit}.
Theorem~\ref{critX=critUF} yields an ultrafilter tangle $\tau$ (an $\aleph_0$-tangle of $S\supseteq \St$ that is not an end) with $X_\tau=Y$. 
Due to $\St\subseteq S$ and $\Ft\subseteq\cT_{<\aleph_0}$ it is immediate that $\tau\cap\vSt$ is an $\aleph_0$-tangle of $\St$.
It remains to check that it gets mapped to $\langle Y\rangle$.
For every $X\in\cX\setminus \cX_\tau$ the ultrafilter $U(\tau,X)$ is generated by $\{C_X(X_\tau)\}$ according to Lemma~\ref{UF:CXY}, so $\toC{\tau\cap\vSt}{X}=C_X(X_\tau)$ follows.
For every $X\in\cX_\tau$ the ultrafilter $U(\tau,X)$ is free and contains $\cC_X(X_\tau)$ by Corollary~\ref{UF:criticalComponentsLiveInUFgeneralised}, so $\toC{\tau\cap\vSt}{X}=X_\tau$ follows.
Thus $\tau\cap\vSt\in\Tt$ gets mapped to $\langle Y\rangle$ as desired.
\end{proof}

\section{Compactifications induced by \texorpdfstring{$\cC$}{C}-systems}
\label{sec6}

From a topological point of view, the compactness of the tangle compactification ultimately is a consequence of the \SC\ property giving rise to the compact \HD\ extension $\invLim{}\beta (\cC_X)=\cU$ of $\invLim{}\cC_X=\Omega$ and the way the inverse limit topology of $\cU$ is extended to interact with $G$ in $G\sqcup\cU=\modG$.
In the spirit of our paper, this raises the question whether there exists a coarsest compactification of $G$ among those that are induced in this particular way by the limit of a \emph{\Csys}, an inverse system of \HDcomp s of the discrete component spaces $\cC_X$ with bonding maps that continuously extend the underlying maps $\cb_{X',X}$.

As our two main results of this section we show that every \Csys\ gives rise to an \Ocomp\ of $G$ in the way Diestel used his \Csys\ $\{\cU_X,f_{X',X}\}$ to compactify $G$ in his tangle compactification, and we show how \Csys s can be partially ordered in a natural way that extends to the \Ocomp s they induce.
We will put these insights to use in the next section in order to find the coarsest \Ocomp\ that is induced by a \Csys .

\begin{definition}
We call an inverse system $\{\,(\alpha(\cC_X),\alpha_X)\,,\,\fraka_{X',X}\,,\,\cX\,\}$ of \HDcomp s $(\alpha(\cC_X),\alpha_X)$ of the discrete spaces $\cC_X$ a $\cC$-\emph{system} (\emph{of} $G$) if 
\begin{align}\label{Eq:CsystemCommute}
\fraka_{X',X}\circ\alpha_{X'}=\alpha_X\circ\cb_{X',X}
\end{align}
holds for all $X\subseteq X'\in\cX$, i.e. if the diagram
\begin{equation*}
\begin{tikzcd}
\cC_X\arrow[d, "\alpha_X"', right hook->] & \cC_{X'}\arrow[d, "\alpha_{X'}", right hook->]\arrow[l, "\cb_{X',X}"']\\
\alpha(\cC_X) & \alpha(\cC_{X'})\arrow[l, "\fraka_{X',X}"]
\end{tikzcd}
\end{equation*}
commutes for all $X\subseteq X'\in\cX$. 
\end{definition}
\begin{notation}
We write $\cC^\alpha$ for the \Csys\ $\{\,(\alpha(\cC_X),\alpha_X)\,,\,\fraka_{X',X}\,\}$ and $\cI^\alpha$ for its inverse limit $\invLim{}\alpha (\cC_X)$.
\end{notation}

Since every continuous map into a \HD\ space is determined by its restriction to any dense subset of its domain (cf. \cite[Corollary 13.14]{Willard}), condition~(\ref{Eq:CsystemCommute}) ensures that the bonding maps $\fraka_{X',X}$ are unique.

\begin{example}
If $G$ is \sol , then $\{\cC_X,\cb_{X',X}\}$ is a \Csys\ giving the end space $\Omega=\invLim{}\cC_X$ (cf.~Theorem~\ref{EndsAreDirections}) that compactifies $G$ in $\VG$.
\end{example}

\begin{example}
Diestel's $\{\cU_X,f_{X',X}\}$ is a \Csys\ giving the tangle space $\Theta=\cU=\invLim{}\cU_X$ that compactifies $G$ in his tangle \comp\ $\modG$.
\end{example}

\begin{notation}
We write $\cC^\cU$ for the \Csys\ $\{\cU_X,f_{X',X}\}$.
\end{notation}

By Theorem~\ref{EndsAreDirections} we have $\Omega=\invLim\cC_X$, so condition~(\ref{Eq:CsystemCommute}) ensures that the mapping
\begin{equation*}
\begin{aligned}
\begin{array}{rclc}
\iota^\alpha\;: & \Omega & \hookrightarrow & \cI^\alpha\\ 
 & (\,C_X\mid X\in\cX\,) & \mapsto & (\,\alpha_X(C_X)\mid X\in\cX\,)
\end{array} 
\end{aligned}
\end{equation*}
is a well-defined injection. 
As our first main result of this section, we generalise Diestel's construction of the tangle \comp\ and show that every \Csys\ gives rise to an \Ocomp :

Given any \Csys\ $\cC^\alpha$ of $G$ we let the map $\pi_X^\alpha\colon\cI^\alpha\to\alpha(\cC_X)$ be the continuous restriction of the $X$th projection map $\text{pr}_X\colon\prod_{Y\in\cX}\alpha(\cC_Y)\to\alpha(\cC_X)$ to $\cI^\alpha$. 
Now we extend the 1-complex of $G$ to a topological space $\alpha G=G\sqcup\cI^\alpha$ by declaring as open in addition to the open sets of $G$, for all $X\in\cX$ and every open set $O$ of $\alpha(\cC_X)$, the sets
\begin{align*}
\cO_{\alpha G}(X,O):=\medcup\cC\cup\mathring{E}(X,\medcup\cC)\cup (\pi_X^\alpha)^{-1}(O)
\end{align*}
where $\cC=\alpha_X^{-1}(O)\subseteq\cC_X$, and taking the topology on $\alpha G$ this generates; since it is not clear that we really defined a basis here,
we formally verify this in Lemma~\ref{Csystem:alphaGtopWellDef}.
By a general result on inverse limits, the open sets $(\pi_X^\alpha)^{-1}(O)$ of $\cI^\alpha$ form a basis for the topology of $\cI^\alpha$ (cf.~\cite[Lemma 1.1.1]{Field}), so $\alpha G$ includes $\cI^\alpha$ as a subspace.
For ease of notation we write $\pi_X$ instead of $\pi_X^\alpha$ if the affiliation is clear.
Before we prove that $\alpha G$ really is an \Ocomp\ of $G$, we check three technical facts:

\begin{fact}\label{Csystem:piXalphaCommute}
The diagram
\begin{equation*}
\begin{tikzcd}[column sep=1.5em]
& \cI^\alpha\arrow[dr,"\pi_{X'}^\alpha"]\arrow[dl,"\pi_X^\alpha"']\\
\alpha(\cC_X) && \alpha(\cC_{X'})\arrow[ll,"\fraka_{X',X}"'] 
\end{tikzcd}
\end{equation*}
commutes for all $X\subseteq X'\in\cX$.
\end{fact}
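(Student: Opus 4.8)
The plan is to recognise that Fact~\ref{Csystem:piXalphaCommute} is simply a restatement of the defining property of the inverse limit $\cI^\alpha$, so essentially no work is required. Since $\cC^\alpha$ is a \Csys, the family $\{\,\alpha(\cC_X),\fraka_{X',X},\cX\,\}$ is by definition an inverse system; hence its inverse limit
\begin{align*}
\cI^\alpha=\invLim{}\alpha(\cC_X)=\{\,(\,p_Y\mid Y\in\cX\,)\mid \fraka_{Y',Y}(p_{Y'})=p_Y\text{ for all }Y\subseteq Y'\in\cX\,\}
\end{align*}
is a well-defined subspace of $\prod_{Y\in\cX}\alpha(\cC_Y)$, and each $\pi_X^\alpha$ is, by construction, the restriction to $\cI^\alpha$ of the $X$th coordinate projection $\text{pr}_X$.

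First I would fix $X\subseteq X'\in\cX$ and an arbitrary point $p=(\,p_Y\mid Y\in\cX\,)\in\cI^\alpha$. Then $\pi_{X'}^\alpha(p)=p_{X'}$ and $\pi_X^\alpha(p)=p_X$ straight from the definition of the projections, while the very membership $p\in\cI^\alpha$, applied to the pair $X\subseteq X'$, yields $\fraka_{X',X}(p_{X'})=p_X$. Concatenating these three identities gives $(\,\fraka_{X',X}\circ\pi_{X'}^\alpha\,)(p)=\pi_X^\alpha(p)$, and since $p$ was arbitrary the triangle commutes.

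I do not expect any genuine obstacle. The only point worth flagging in the writeup is that the argument presupposes that $\{\,\alpha(\cC_X),\fraka_{X',X},\cX\,\}$ really is an inverse system---in particular that the $\fraka$'s are mutually compatible bonding maps---which is precisely what the definition of a \Csys\ guarantees, and which is also what makes $\cI^\alpha$, and hence the statement itself, meaningful. (Should one prefer a topological phrasing, one could instead observe that $\fraka_{X',X}\circ\pi_{X'}^\alpha$ and $\pi_X^\alpha$ are continuous maps into the \HD\ space $\alpha(\cC_X)$, so it would suffice to verify the identity on a dense subset of $\cI^\alpha$; but the pointwise check above is already immediate and needs no such detour.)
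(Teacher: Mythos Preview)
Your proposal is correct and matches the paper's treatment: the paper states this as a \emph{Fact} with no proof, precisely because it is immediate from the definition of the inverse limit and the projection maps $\pi_X^\alpha$. Your pointwise verification is exactly the (trivial) argument underlying this.
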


\begin{lemma}\label{Csystem:openNbhdsBehaveWell}
For all $X\subseteq X'\in\cX$ and every open set $O$ of $\alpha(\cC_X)$ we have
\begin{align*}
\cO_{\alpha G}(X,O)\supseteq\cO_{\alpha G}(X',\fraka_{X',X}^{-1}(O)).
\end{align*}
\end{lemma}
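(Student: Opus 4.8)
The plan is to unwind both sides of the claimed inclusion into their three defining pieces and compare them using Fact~\ref{Csystem:piXalphaCommute} together with the commuting square~(\ref{Eq:CsystemCommute}) defining a \Csys. Write $O'=\fraka_{X',X}^{-1}(O)$, and set $\cC=\alpha_X^{-1}(O)\subseteq\cC_X$ and $\cC'=\alpha_{X'}^{-1}(O')\subseteq\cC_{X'}$, so that by definition
\begin{align*}
\cO_{\alpha G}(X,O)&=\medcup\cC\cup\mathring{E}(X,\medcup\cC)\cup(\pi_X)^{-1}(O),\\
\cO_{\alpha G}(X',O')&=\medcup\cC'\cup\mathring{E}(X',\medcup\cC')\cup(\pi_{X'})^{-1}(O').
\end{align*}
I would prove the inclusion by checking it on each of the three parts of $\cO_{\alpha G}(X',O')$ in turn.

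First, the limit part: since $\pi_X=\fraka_{X',X}\circ\pi_{X'}$ by Fact~\ref{Csystem:piXalphaCommute}, we have $(\pi_{X'})^{-1}(O')=(\pi_{X'})^{-1}(\fraka_{X',X}^{-1}(O))=(\fraka_{X',X}\circ\pi_{X'})^{-1}(O)=(\pi_X)^{-1}(O)$, so this part is literally equal to the corresponding part of $\cO_{\alpha G}(X,O)$, which is even stronger than needed. Second, the vertex part: I claim $\cC'\subseteq\cb_{X',X}^{-1}(\cC)$, equivalently $\cb_{X',X}[\cC']\subseteq\cC$. Indeed, for $C'\in\cC'$ we have $\alpha_{X'}(C')\in O'=\fraka_{X',X}^{-1}(O)$, hence $\fraka_{X',X}(\alpha_{X'}(C'))\in O$; by~(\ref{Eq:CsystemCommute}) this equals $\alpha_X(\cb_{X',X}(C'))$, so $\cb_{X',X}(C')\in\alpha_X^{-1}(O)=\cC$, as desired. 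Consequently every component of $G-X'$ in $\cC'$ is contained in a component of $G-X$ in $\cC$, so $\medcup\cC'\subseteq\medcup\cC\cup X$; since the only vertices of $X$ that could appear are already accounted for, this gives $\medcup\cC'\subseteq\medcup\cC\cup\mathring{E}(X,\medcup\cC)\cup\ldots$ — more carefully, $V(\medcup\cC')\subseteq V(\medcup\cC)$, so this part lies inside the vertex part of $\cO_{\alpha G}(X,O)$.

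For the third, edge part, I would argue that any inner edge point in $\mathring{E}(X',\medcup\cC')$ — i.e.\ an inner point of an edge incident with $\medcup\cC'$ but not running between two components, hence an edge with at least one endvertex in $\medcup\cC'$ — lies on an edge with an endvertex in $\medcup\cC$ (using $\cC'\subseteq\cb_{X',X}^{-1}(\cC)$ again, plus the fact that the other endvertex, if in $X'\setminus X$, still keeps the edge incident with $\medcup\cC$ once we pass to the coarser set $X$), and hence belongs to $\mathring{E}(X,\medcup\cC)$. The one genuinely fiddly point — the one I expect to be the main obstacle — is this bookkeeping with the edge sets $\mathring{E}(X,\cdot)$: one must be careful that enlarging from $X$ to $X'$ can only split components and move some edges from the ``incident'' class into the ``between'' class, never the other way, so that $\mathring{E}(X',\medcup\cC')\subseteq\mathring{E}(X,\medcup\cC)$ indeed holds; this is exactly the kind of elementary-but-error-prone verification that Diestel carries out in the analogous step for $\modG$, and I would either cite that or spell it out in one or two sentences. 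Assembling the three inclusions gives $\cO_{\alpha G}(X',O')\subseteq\cO_{\alpha G}(X,O)$, completing the proof.
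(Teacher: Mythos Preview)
Your approach is essentially the same as the paper's: split into the limit part (equal by Fact~\ref{Csystem:piXalphaCommute}) and the graph part (controlled by~(\ref{Eq:CsystemCommute})). The paper is more compressed: it notes $\pi_X^{-1}(O)=\pi_{X'}^{-1}(O')$ and then observes in one line that $\alpha_{X'}^{-1}(O')=\cb_{X',X}^{-1}(\alpha_X^{-1}(O))$ (an equality, not just an inclusion), whence $\medcup\cC'\subseteq\medcup\cC$; it treats the $\mathring{E}$-part as immediate from this. Your explicit case analysis for the edge part is correct but unnecessary once you have $\medcup\cC'\subseteq\medcup\cC$: every $X'$--$\medcup\cC'$ edge has its $X'$-endvertex either in $X$ (so it is an $X$--$\medcup\cC$ edge) or in $X'\setminus X\subseteq V(G)\setminus X$, in which case that endvertex lies in the same $G-X$ component as the other one, hence in $\medcup\cC$. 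The momentary ``$\subseteq\medcup\cC\cup X$'' is a slip you already caught; simply $\medcup\cC'\subseteq\medcup\cC$.
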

\begin{proof}
We write $O'=\fraka_{X',X}^{-1}(O)$. Fact~\ref{Csystem:piXalphaCommute} yields $\pi_X^{-1}(O)=\pi_{X'}^{-1}(O')$ so it remains to verify that $\bigcup \alpha_X^{-1}(O)\supseteq\bigcup \alpha_{X'}^{-1}(O')$ which is easily calculated:
\[
\medcup\alpha_{X'}^{-1}(O') =\; \medcup\alpha_{X'}^{-1}(\fraka_{X',X}^{-1}(O)) \overset{(\ref{Eq:CsystemCommute})}{=}\medcup\cb_{X',X}^{-1}(\alpha_X^{-1}(O))
\subseteq\;\medcup\alpha_X^{-1}(O). \qedhere
\]
\end{proof}

\begin{lemma}\label{Csystem:alphaGtopWellDef}
The open sets of the 1-complex of $G$ together with the sets $\cO_{\alpha G}(X,O)$ form a basis for a topology on $\alpha G=G\sqcup\cI^\alpha$.
\end{lemma}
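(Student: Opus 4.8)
The plan is to verify the standard criterion for a family of sets to be a basis: namely that the union of all the proposed basic sets is all of $\alpha G$, and that whenever two basic sets meet, their intersection contains a basic set around each of its points. Since the open sets of the $1$-complex of $G$ already form a basis for the subspace topology on $G$, and $G$ is to be an open subspace, the only genuine work concerns the new sets $\cO_{\alpha G}(X,O)$ and how they interact with each other and with the open sets of $G$.

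First I would observe that $\alpha G$ is covered: taking $X=\emptyset$ and $O=\alpha(\cC_\emptyset)$ gives $\cO_{\alpha G}(\emptyset,\alpha(\cC_\emptyset))\supseteq\cI^\alpha$ together with all of $\bigcup\cC_\emptyset$ and its inner edge points, and adding the basic open sets of $G$ (in particular the basic neighbourhoods of each vertex) covers the rest of the $1$-complex. Next, for the intersection condition I would split into cases by the types of the two basic sets involved. The intersection of two basic open sets of $G$ is handled by the fact that $G$'s basic sets already form a basis. For the intersection of a basic set of $G$ with a set $\cO_{\alpha G}(X,O)$: a basic neighbourhood of a vertex $v$ either misses $\bigcup\alpha_X^{-1}(O)$ entirely or meets it, and in the latter case one shrinks the vertex-neighbourhood along the finitely many edges to $v$ that leave $\bigcup\alpha_X^{-1}(O)$, staying inside the intersection; a basic interval inside an $\mathring e$ is easy. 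The substantive case is the intersection of $\cO_{\alpha G}(X,O)$ with $\cO_{\alpha G}(Y,P)$, and the key tool is Lemma~\ref{Csystem:openNbhdsBehaveWell}: passing to $Z=X\cup Y$, the sets $\cO_{\alpha G}(Z,\fraka_{Z,X}^{-1}(O))$ and $\cO_{\alpha G}(Z,\fraka_{Z,Y}^{-1}(P))$ are contained in $\cO_{\alpha G}(X,O)$ and $\cO_{\alpha G}(Y,P)$ respectively, and intersecting them reduces — via $\pi_Z^{-1}(O_1)\cap\pi_Z^{-1}(O_2)=\pi_Z^{-1}(O_1\cap O_2)$ on the limit part and $\alpha_Z^{-1}(O_1)\cap\alpha_Z^{-1}(O_2)=\alpha_Z^{-1}(O_1\cap O_2)$ on the component part — to the single basic set $\cO_{\alpha G}(Z,\fraka_{Z,X}^{-1}(O)\cap\fraka_{Z,Y}^{-1}(P))$, which is of the required form since $\alpha(\cC_Z)$ is a topological space and the intersection of two of its open sets is open. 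One must also check that a point of the intersection that is an inner edge point or a vertex of $G$ already has a basic $G$-neighbourhood inside the intersection, which follows because $G$ is open and the $\cO_{\alpha G}$'s restrict to open subsets of $G$.

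The main obstacle — though it is more bookkeeping than difficulty — is keeping straight the two ``layers'' of each basic set $\cO_{\alpha G}(X,O)$: the graph part $\bigcup\alpha_X^{-1}(O)\cup\mathring E(X,\bigcup\alpha_X^{-1}(O))$ and the limit part $(\pi_X^\alpha)^{-1}(O)$, and checking that both behave compatibly under intersection and under the monotonicity of Lemma~\ref{Csystem:openNbhdsBehaveWell}. The identity $\cb_{X',X}^{-1}(\alpha_X^{-1}(O))=\alpha_{X'}^{-1}(\fraka_{X',X}^{-1}(O))$ from \eqref{Eq:CsystemCommute}, already extracted in the proof of Lemma~\ref{Csystem:openNbhdsBehaveWell}, is exactly what reconciles the graph part across different finite vertex sets; Fact~\ref{Csystem:piXalphaCommute} does the same for the limit part. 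Once these are in hand the verification is routine, and I would present it as: cover by the $X=\emptyset$ set plus $G$'s basis; then dispatch the intersection condition case by case, with the $\cO_{\alpha G}(X,O)\cap\cO_{\alpha G}(Y,P)$ case resolved by passing to $X\cup Y$ and intersecting inside a single component compactification.
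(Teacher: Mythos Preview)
Your proposal is correct and follows essentially the same route as the paper: the substantive case is the intersection of two sets $\cO_{\alpha G}(X,O)$ and $\cO_{\alpha G}(Y,P)$, and both you and the paper resolve it by passing to $\Xi=X\cup Y$ via Lemma~\ref{Csystem:openNbhdsBehaveWell} and Fact~\ref{Csystem:piXalphaCommute} to obtain the single basic set $\cO_{\alpha G}(\Xi,\fraka_{\Xi,X}^{-1}(O)\cap\fraka_{\Xi,Y}^{-1}(P))$. The paper's proof is much terser---it states at once that it suffices to handle points $\xi\in\cI^\alpha$ and gives only that case---whereas you spell out the covering and the other intersection cases explicitly; but the key idea is identical.
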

\begin{proof}
It suffices to show that for every $\xi\in\cI^\alpha$ and every two neighbourhoods $\cO_{\alpha G}(X,O)$ and $\cO_{\alpha G}(X',O')$ of $\xi$ there exists a third neighbourhood of this form included in the intersection $\cO_{\alpha G}(X,O)\cap \cO_{\alpha G}(X',O')$.
Write $\Xi=X\cup X'$. The set
\begin{align*}
\cO_{\alpha G}\big(\,\Xi\,,\;\fraka_{\Xi,X}^{-1}(O)\cap\fraka_{\Xi,X'}^{-1}(O')\,\big)
\end{align*}
is such a neighbourhood by Fact~\ref{Csystem:piXalphaCommute} and Lemma~\ref{Csystem:openNbhdsBehaveWell}.
\end{proof}

\begin{theorem}\label{Csystem:EveryCsystemInducesOmegaComp}
Let $G$ be any graph.
If $\cC^\alpha$ is a $\cC$-system of $G$, then $\alpha G$ is an $\Omega$-compactification of $G$.
\end{theorem}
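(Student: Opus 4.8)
The plan is to check the two requirements in the definition of an \Ocomp\ directly: that $\alpha G$ is a \comp\ of $\VG$, and that $\alpha G\setminus\mathring{E}$ is a \HDcomp\ of $\VG\setminus\mathring{E}$. I would first verify that $\VG$ sits inside $\alpha G$ as a dense subspace. Identifying $\Omega$ with its image under $\iota^\alpha$, one computes for every $X\in\cX$ and every open $O\subseteq\alpha(\cC_X)$ that $\cO_{\alpha G}(X,O)\cap\VG=\cO_{\VG}(X,\alpha_X^{-1}(O))$, since an end $(\,C_Y\mid Y\in\cX\,)$ lands in $(\pi_X^\alpha)^{-1}(O)$ precisely when $C_X\in\alpha_X^{-1}(O)$. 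Conversely, as each $\cC_X$ is open in $\alpha(\cC_X)$, every $\cC\subseteq\cC_X$ is itself open in $\alpha(\cC_X)$, so every basic open set $\cO_{\VG}(X,\cC)$ of $\VG$ arises as $\cO_{\alpha G}(X,\cC)\cap\VG$. Hence the subspace topology on $\VG$ coincides with its own topology. Density of $\VG$ in $\alpha G$ follows because $\cC_X$ is dense in $\alpha(\cC_X)$, so $\alpha_X^{-1}(O)\neq\emptyset$ whenever $O\neq\emptyset$, whence $\cO_{\alpha G}(X,O)$ meets $V$. Since $G$ is open in $\alpha G$, so is $\mathring{E}$, so $\alpha G\setminus\mathring{E}$ is closed in $\alpha G$ and will be compact as soon as $\alpha G$ is.

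The main work is the compactness of $\alpha G$, and the cleanest route I see is to exhibit $\alpha G$ as a continuous image of Diestel's $\modG$, whose compactness is Theorem~\ref{DiestelsTangleCompWorks}. For each $X\in\cX$ the \SC\ property extends $\alpha_X\colon\cC_X\hookrightarrow\alpha(\cC_X)$ to a continuous surjection $h_X\colon\cU_X=\beta(\cC_X)\twoheadrightarrow\alpha(\cC_X)$ (onto, since its image is closed, dense, hence all of $\alpha(\cC_X)$), and by~(\ref{Eq:CsystemCommute}) the $h_X$ commute with the bonding maps of $\cC^\cU$ and of $\cC^\alpha$ (both sides agree on the dense set $\cC_{X'}$), so they induce a continuous map $g\colon\cU\to\cI^\alpha$. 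This $g$ is surjective because over the directed poset $\cX$ an inverse limit of surjections between non-empty compact \HD\ spaces is surjective: the fibres $h_X^{-1}(\xi_X)$ form an inverse system of non-empty compact \HD\ spaces whose limit is non-empty. Extending $g$ by the identity on $G$ gives $\bar g\colon\modG\to\alpha G$, and one checks continuity by computing $\bar g^{-1}(\cO_{\alpha G}(X,O))=\bigcup_j\cO_{\modG}(X,\cD_j)$, where $(\cD_j)$ is any family with $h_X^{-1}(O)=\bigcup_j\{\,U\in\cU_X\mid\cD_j\in U\,\}$ and $\bigcup_j\cD_j=\alpha_X^{-1}(O)$. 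Thus $\alpha G$ is compact as a continuous image of the compact space $\modG$, and therefore so is the closed subspace $\alpha G\setminus\mathring{E}$.

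It remains to show that $\alpha G\setminus\mathring{E}$ is \HD\ and that $\VG\setminus\mathring{E}$ is dense in it; the embedding $\VG\setminus\mathring{E}\hookrightarrow\alpha G\setminus\mathring{E}$ is automatic from the first paragraph by transitivity of subspace topologies. For density, given $\xi\in\cI^\alpha$ and a basic neighbourhood $\cO_{\alpha G}(X,O)$, the non-empty set $\alpha_X^{-1}(O)$ contains a component, hence a vertex, which lies in $\cO_{\alpha G}(X,O)\cap(\VG\setminus\mathring{E})$; together with $V\subseteq\VG\setminus\mathring{E}$ this gives $\alpha G\setminus\mathring{E}=V\cup\cI^\alpha\subseteq\overline{\VG\setminus\mathring{E}}$. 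For Hausdorffness the points of $\alpha G\setminus\mathring{E}$ are the vertices of $G$ and the points of $\cI^\alpha$: each vertex $v$ is isolated in $\alpha G\setminus\mathring{E}$ (a basic neighbourhood of $v$ lying in $G$ meets $\alpha G\setminus\mathring{E}$ only in $\{v\}$) and is separated from any $\xi\in\cI^\alpha$ by $\{v\}$ and $\cO_{\alpha G}(X,\alpha(\cC_X))$ for some $X\ni v$; and two distinct $\xi,\xi'\in\cI^\alpha$ satisfy $\pi_X^\alpha(\xi)\neq\pi_X^\alpha(\xi')$ for some $X$, so pulling back disjoint \HD-neighbourhoods of these two points in $\alpha(\cC_X)$ yields disjoint open sets $\cO_{\alpha G}(X,O)$ and $\cO_{\alpha G}(X,O')$. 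The main obstacle is the compactness step: going through $\modG$ avoids re-running Diestel's covering argument and, in particular, sidesteps the awkward point that a single infinite (possibly rayless) component of some $G-X$ need not be compact on its own, which is where a naive open-cover argument on $\alpha G$ would get stuck.
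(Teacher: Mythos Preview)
Your proof is correct. The subspace-topology, density, and Hausdorffness verifications are essentially the same as the paper's. The genuine difference is in the compactness step.

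The paper proves compactness of $\alpha G$ directly by an open-cover argument generalising Diestel's original proof for $\modG$: from a cover of $\cI^\alpha$ by basic opens one extracts a finite subcover (since $\cI^\alpha$ is compact \HD), pushes everything up to a common level $\Xi\in\cX$ via Lemma~\ref{Csystem:openNbhdsBehaveWell}, and then argues that only finitely many components of $G-\Xi$ can remain uncovered---otherwise the closed complements $A_Y\subseteq\alpha(\cC_Y)$ would form a non-empty inverse subsystem over $\lfloor\Xi\rfloor_\cX$, producing a point of $\cI^\alpha$ outside the subcover. Hence $G\setminus\bigcup\cO'$ is the 1-complex of a finite graph and therefore compact.

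You instead exhibit $\alpha G$ as a continuous image of $\modG$: the \SC\ extensions $h_X\colon\cU_X\to\alpha(\cC_X)$ are compatible with the bonding maps (by the dense-subset argument) and combine to a continuous surjection $g\colon\cU\to\cI^\alpha$, whose extension by $\id_G$ is continuous by your explicit preimage computation. Then compactness of $\alpha G$ follows from Theorem~\ref{DiestelsTangleCompWorks}. This is precisely the mechanism the paper later packages as Lemmas~\ref{Csystem:leForCompsExtendsToleC} and~\ref{Csystem:leCextendsToCcomps} plus Theorem~\ref{Csystem:leastAndGreatest}~(ii) (that $\cC^\cU$ is the greatest \Csys\ and $\modG$ the finest \Ccomp); you have front-loaded that argument to serve as the compactness proof. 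Your route is conceptually tidy once $\modG$ is available, but it imports Diestel's compactness result as a black box. The paper's direct argument is self-contained and has the side benefit that the compactness of $\modG$ becomes a special case of the theorem rather than an input to it.
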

\begin{proof} 
Lemma~\ref{Csystem:alphaGtopWellDef} ensures that $\alpha G$ is a topological space.

First, we show that $\alpha G$ is compact.
For this, we generalise Diestel's proof of his \cite[Theorem 1 (i)]{EndsAndTangles} in that we replace his Lemmas 2.3 and 3.7 by topological arguments.
Let $\cO$ be any cover of $\alpha G\setminus G$ by open sets $\cO_{\alpha G}(X,O)$ of $\alpha G$.
The inverse limit $\cI^\alpha$ of compact \HD\ spaces is again compact \HD , so $\alpha G\setminus G=\cI^\alpha$ is compact and the cover $\cO$ admits a finite subcover of the form
\begin{align*}
\cO'=\{\,\cO_{\alpha G}(X,O_X)\mid X\in\cX'\,\}
\end{align*}
(with $\cX'\subseteq\cX$ finite) that covers $\cI^\alpha$.
Our aim is to show that $G\setminus\bigcup\cO'$ is the 1-complex of a finite graph, since then $G\setminus\bigcup\cO'=\alpha G\setminus\bigcup\cO'$ will be compact as desired.
For this, we put $\Xi=\bigcup\cX'$, and for each $X\in\cX'$ we let $O_X':=\fraka_{\Xi,X}^{-1}(O_X)$. 
Recall that $\cO_{\alpha G}(\Xi,O_X')\subseteq\cO_{\alpha G}(X,O_X)$ holds by Lemma~\ref{Csystem:openNbhdsBehaveWell}.
The collection
\begin{align*}
\{\,\cO_{\alpha G}(\Xi,O_X')\mid X\in\cX'\,\}
\end{align*}
still covers $\cI^\alpha$ by Fact~\ref{Csystem:piXalphaCommute}. Now we consider the set
\begin{align*}
\cC:=\cC_\Xi\setminus\bigcup_{X\in\cX'}\alpha_\Xi^{-1}(O_X').
\end{align*}
If $\bigcup\cC$ is finite, then $G[\Xi\cup V[\cC]]\supseteq G\setminus\bigcup \cO'$ is compact and we are done.
Hence we may assume for a contradiction that $\bigcup\cC$ is infinite.
The set
\begin{align*}
A:=\alpha (\cC_\Xi)\setminus\bigcup_{X\in\cX'}O_X'
\end{align*}
is closed in $\alpha(\cC_\Xi)$ and satisfies $\alpha_\Xi^{-1}(A)=\cC$. For all $Y\in\lfloor\Xi\rfloor_\cX$ put $A_Y=\fraka_{Y,\Xi}^{-1}(A)$.
Every $A_Y$ is compact \HD\ as closed subset of $\alpha (\cC_Y)$.
Since $\bigcup\cC$ is infinite, it follows that every $\cb_{Y,\Xi}^{-1}(\cC)$ is non-empty. 
Combined with $\alpha_\Xi[\cC]\subseteq A$ and~(\ref{Eq:CsystemCommute}) this implies that every $A_Y$ is non-empty, witnessed by $\alpha_Y[\cb_{Y,\Xi}^{-1}(\cC)]\subseteq A_Y$.
Consequently we find a limit of the inverse system $\{\,A_Y\,,\,\fraka_{Y',Y}\rest A_{Y'}\,,\,\lfloor\Xi\rfloor_\cX\,\}$ and this limit determines a $\xi\in\cI^\alpha$ since $\lfloor\Xi\rfloor_\cX$ is cofinal in $\cX$.
In particular,
\begin{align*}
\xi\in \pi_\Xi^{-1}(A)=\cI^\alpha\setminus\medcup\cO'
\end{align*}
is a contradiction. Thus $\alpha G$ is compact.

Second, we show that $\alpha G$ induces the correct subspace topology on $G\sqcup\iota^\alpha [\Omega]$.
For this we assume without loss of generality that $\iota^\alpha$ is the identity on $\Omega$.
Each basic open set $\cO_{\alpha G}(X,O)$ of $\alpha G$ induces on $G\sqcup\Omega$ the open set $\cO_{\VG}(X,\alpha_X^{-1}(O))$.
Conversely, every basic open set $\cO_{\VG}(X,\cD)$ of $\VG$ is induced by the basic open set $\cO_{\alpha G}(X,\alpha_X[\cD])$ of $\alpha G$ (recall that $\alpha_X[\cC_X]$ is open in $\alpha (\cC_X)$).

Finally, we deduce that $\alpha G$ is an \Ocomp\ of $G$.
We have shown that $\alpha G$ is a compact space including $\VG$ as a subspace. From $\cI^\alpha$ being \HD\ and the choice of our basis for the topology of $\alpha G$ it is immediate that $\alpha G\setminus\mathring{E}$ is \HD .
Since $\mathring{E}$ is open in $\alpha G$, it follows that $\alpha G\setminus\mathring{E}$ is compact.
Therefore, it remains to show that $\VG$ is dense in $\alpha G$ and that $\VG\setminus\mathring{E}$ is dense in $\alpha G\setminus\mathring{E}$.
For this, it suffices to show that an arbitrary basic open set $\cO_{\alpha G}(X,O)$ with $O$ non-empty meets $V$. 
Since $\alpha_X[\cC_X]$ is dense in $\alpha(\cC_X)$ we know that $O$ meets $\alpha_X[\cC_X]$, so $\bigcup\alpha_X^{-1}(O)$ is a non-empty subgraph of $G$, and hence $\cO_{\alpha G}(X,O)$ meets $V$.
\end{proof}

\begin{definition}
We call an \Ocomp\ of $G$ a \emph{\Ccomp } of $G$ if it is induced by a \Csys\ of $G$.
\end{definition}

\begin{fact}
All \Ccomp s are crude (cf.~p.~\pageref{def:crude}).
\end{fact}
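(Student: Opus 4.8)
The plan is to unwind the two definitions involved and reduce everything to one elementary graph-theoretic observation. By definition a space $X\supseteq G$ is crude once it has a basis each of whose members is either a basic open set of $G$ or a crude set. For the \Ccomp\ $\alpha G$ induced by a \Csys\ $\cC^\alpha$, the basic open sets of $G$ together with the sets $\cO_{\alpha G}(X,O)$ (for $X\in\cX$ and $O$ open in $\alpha(\cC_X)$) form such a basis: by Lemma~\ref{Csystem:alphaGtopWellDef} the open sets of the $1$-complex of $G$ together with the $\cO_{\alpha G}(X,O)$ do, and since the basic open sets of $G$ are themselves a basis for the $1$-complex, which is open in $\alpha G$, one may refine this to the smaller family. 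Hence it suffices to prove that every $\cO_{\alpha G}(X,O)$ is crude, that is, that
\[
\cO_{\alpha G}(X,O)\cap\mathring{E}=\bigcup_{v\in\cO_{\alpha G}(X,O)\cap V}\mathring{E}(v).
\]

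First I would read off the vertices and the inner edge points of $\cO_{\alpha G}(X,O)=\medcup\cC\cup\mathring{E}(X,\medcup\cC)\cup(\pi_X^\alpha)^{-1}(O)$, where $\cC=\alpha_X^{-1}(O)\subseteq\cC_X$. The summand $(\pi_X^\alpha)^{-1}(O)$ lies entirely in the remainder $\cI^\alpha$, the summand $\mathring{E}(X,\medcup\cC)$ consists only of inner edge points, and no component in $\cC$ meets $X$. Therefore $\cO_{\alpha G}(X,O)\cap V=V[\cC]$, whereas $\cO_{\alpha G}(X,O)\cap\mathring{E}$ is exactly the set of inner edge points of those edges of $G$ both of whose endvertices lie in a common component in $\cC$, together with the inner edge points of the edges between $X$ and $V[\cC]$.

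The key step is then the observation that an edge of $G$ incident with a vertex $v\in V[\cC]$ has its other endvertex either in the same component of $G-X$ as $v$, which component then lies in $\cC$, or in $X$, because $G$ has no edge between two distinct components of $G-X$. Hence the edges incident with the vertices of $V[\cC]$ are precisely the edges inside components of $\cC$ together with the edges between $X$ and $V[\cC]$, and so $\bigcup_{v\in V[\cC]}\mathring{E}(v)=\cO_{\alpha G}(X,O)\cap\mathring{E}$, which is the displayed identity. I do not anticipate a real obstacle; the only mildly fiddly point is the bookkeeping in the first paragraph that the chosen family really is a basis, and not merely a subbasis, for $\alpha G$.
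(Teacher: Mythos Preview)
Your argument is correct. The paper itself states this fact without proof, so there is no `paper's own proof' to compare against; your verification that each $\cO_{\alpha G}(X,O)$ satisfies $\cO_{\alpha G}(X,O)\cap\mathring{E}=\bigcup_{v\in V[\cC]}\mathring{E}(v)$ via the observation that edges from $V[\cC]$ can only lead into the same component or into $X$ is exactly the intended (and only reasonable) justification.
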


Our next definition provides a way to compare \Csys s:

\begin{definition}
If $\cC^\alpha=\{\,(\alpha(\cC_X),\alpha_X)\,,\,\fraka_{X',X}\,\}$ and $\cC^\delta=\{\,(\delta(\cC_X),\delta_X)\,,\,\frakd_{X',X}\,\}$ are two \Csys s of $G$, then we write $\cC^\alpha\le_{\cC}\cC^\delta$ if
for every $X\in\cX$ there is some $f_X\colon (\delta(\cC_X),\delta_X)\ge (\alpha(\cC_X),\alpha_X)$ and these maps are compatible in that
\begin{align}\label{Eq:CsystemsComparisionCommute}
f_X\circ\frakd_{X',X}=\fraka_{X',X}\circ f_{X'}
\end{align}
holds for all $X\subseteq X'\in\cX$ (a diagram follows below).
\end{definition}
Recall that the maps $f_X$ are unique.
Condition~(\ref{Eq:CsystemsComparisionCommute}) together with condition~(\ref{Eq:CsystemCommute}) ensures that the left-hand diagram
\begin{equation*}\label{Diagram:CsystemsComparisionCommute}
\begin{tikzcd}
\cC_X\arrow[dr, red, right hook->, "\delta_X"]\arrow[ddr, red, right hook->, "\alpha_X"', bend right] 
  &
  & 
  & \cC_{X'}\arrow[dl, red, left hook->, "\delta_{X'}"']\arrow[ddl, red, left hook->, "\alpha_{X'}", bend left]\arrow[lll, "\cb_{X',X}"'] & &  \Omega\arrow[dl, red, left hook->, "\iota^\delta"']\arrow[ddl, red, left hook->, "\iota^\alpha", bend left]\\
 & \delta (\cC_X)\arrow[d, blue, "f_X"', two heads] 
  & \delta (\cC_{X'})\arrow[l, "\frakd_{X',X}"']\arrow[d, blue, "f_{X'}", two heads] & &  \cI^\delta\arrow[d, blue, two heads, "\psi^{\delta\alpha}"']\\
 & \alpha (\cC_X) & \alpha (\cC_{X'})\arrow[l, "\fraka_{X',X}"] & &  \cI^\alpha
\end{tikzcd}
\end{equation*}
commutes so that our compatible continuous surjections $f_X$ (cf.~Lemma~\ref{Top:compactification:witnessBehaviour}) combine to a well-defined continuous surjection
\begin{align*}
\begin{array}{rclc}
\psi^{\delta\alpha}\;: & \cI^\delta & \twoheadrightarrow & \cI^\alpha \\ 
 & (\,p_X\mid X\in\cX\,) & \mapsto & (\,f_X(p_X)\mid X\in\cX\,)
\end{array} 
\end{align*}
from one inverse limit onto the other (cf.~\cite[Corollary 1.1.5]{Field}), and that $\psi^{\delta\alpha}$ fixes $\Omega$ in that $\psi^{\delta\alpha}\circ\iota^\delta=\iota^\alpha$ (see the right-hand diagram above).
When we are given two concrete \Csys s $\cC^\alpha$ and $\cC^\delta$ later, verifying $\cC^\alpha\le_{\cC}\cC^\delta$ will be easy:

\begin{lemma}
\label{Csystem:leForCompsExtendsToleC}
If $\cC^\alpha$ and $\cC^\delta$ are \Csys s with 
$f_X\colon (\delta(\cC_X),\delta_X)\ge (\alpha(\cC_X),\alpha_X)$ for all $X\in\cX$, then~(\ref{Eq:CsystemsComparisionCommute}) holds for all $X\subseteq X'\in\cX$.
In particular, $\cC^\alpha\le_{\cC}\cC^\delta$.
\end{lemma}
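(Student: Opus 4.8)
The statement claims that for two $\cC$-systems $\cC^\alpha$ and $\cC^\delta$, the mere existence of compactification witnesses $f_X\colon(\delta(\cC_X),\delta_X)\ge(\alpha(\cC_X),\alpha_X)$ for every $X$ automatically forces the compatibility squares~(\ref{Eq:CsystemsComparisionCommute}) to commute, so that $\cC^\alpha\le_\cC\cC^\delta$ holds. The key observation to exploit is that $\cC_X$ is \emph{dense} in $\delta(\cC_X')$ — wait, more precisely, $\delta_{X'}[\cC_{X'}]$ is dense in $\delta(\cC_{X'})$ (this is part of being a compactification) — together with the fact, recalled just before the lemma, that a continuous map into a \HD\ space is determined by its restriction to a dense subset (cf.~\cite[Corollary 13.14]{Willard}). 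Since $\alpha(\cC_X)$ is \HD , it therefore suffices to check that the two continuous maps $f_X\circ\frakd_{X',X}$ and $\fraka_{X',X}\circ f_{X'}$ from $\delta(\cC_{X'})$ to $\alpha(\cC_X)$ agree on the dense subset $\delta_{X'}[\cC_{X'}]$.

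So the plan is: fix $X\subseteq X'$ in $\cX$ and a component $C\in\cC_{X'}$, and evaluate both maps at $\delta_{X'}(C)$. For the left-hand side, $\frakd_{X',X}(\delta_{X'}(C))=\delta_X(\cb_{X',X}(C))$ by~(\ref{Eq:CsystemCommute}) applied to $\cC^\delta$; then, since $f_X$ is a compactification witness over the identity on $\cC_X$, we have $f_X(\delta_X(\cb_{X',X}(C)))=\alpha_X(\cb_{X',X}(C))$. For the right-hand side, $f_{X'}(\delta_{X'}(C))=\alpha_{X'}(C)$ because $f_{X'}$ is a witness over $\id_{\cC_{X'}}$; then $\fraka_{X',X}(\alpha_{X'}(C))=\alpha_X(\cb_{X',X}(C))$ by~(\ref{Eq:CsystemCommute}) applied to $\cC^\alpha$. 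Both sides equal $\alpha_X(\cb_{X',X}(C))$, so the two continuous maps agree on the dense set $\delta_{X'}[\cC_{X'}]$ and hence everywhere. This gives~(\ref{Eq:CsystemsComparisionCommute}), and the ``in particular'' clause $\cC^\alpha\le_\cC\cC^\delta$ is then immediate from the definition.

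I do not expect a genuine obstacle here; the lemma is essentially a diagram chase on the dense subsets, and the only thing to be careful about is citing the right density/uniqueness facts (that $\delta_{X'}[\cC_{X'}]$ is dense in $\delta(\cC_{X'})$ because $(\delta(\cC_{X'}),\delta_{X'})$ is a compactification, that $\alpha(\cC_X)$ is \HD , and that a compactification witness over the identity acts as $\alpha_X\circ(\text{witness})^{-1}$-style identity on the copy of $\cC_X$, i.e.\ $f_X\circ\delta_X=\alpha_X$, which is exactly what $f_X\colon(\delta(\cC_X),\delta_X)\ge(\alpha(\cC_X),\alpha_X)$ means via the commuting triangle defining $\le$). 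The proof is short and uses only~(\ref{Eq:CsystemCommute}) for both systems plus the defining triangles of the $f_X$.
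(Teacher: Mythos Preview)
Your proposal is correct and follows exactly the same approach as the paper: verify that $f_X\circ\frakd_{X',X}$ and $\fraka_{X',X}\circ f_{X'}$ agree on the dense image $\delta_{X'}[\cC_{X'}]$ by applying~(\ref{Eq:CsystemCommute}) for both systems together with $f_X\circ\delta_X=\alpha_X$, and then invoke uniqueness of continuous extensions into the Hausdorff space $\alpha(\cC_X)$. The paper presents this as a single chain of equalities precomposed with $\delta_{X'}$, but the content is identical.
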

\begin{proof}
We recall that the $f_X$ satisfy $f_X\circ\delta_X=\alpha_X$, and for $X'\supseteq X$ we compute
\begin{align*}
\begin{array}{ccccc}
f_X\circ\frakd_{X',X}\circ\delta_{X'} & \overset{(\ref{Eq:CsystemCommute})}{=} & f_X\circ\delta_X\circ\cb_{X',X} & = & \alpha_X\circ\cb_{X',X}\\
& \overset{(\ref{Eq:CsystemCommute})}{=} & \fraka_{X',X}\circ\alpha_{X'} & = & \fraka_{X',X}\circ f_{X'}\circ\delta_{X'}
\end{array}
\end{align*}
so both sides of~(\ref{Eq:CsystemsComparisionCommute}) agree on $\delta_{X'}[\cC_{X'}]$. 
Since every continuous map into a \HD\ space is determined by its restriction to any dense subset of its domain (cf. \cite[Corollary 13.14]{Willard}), both sides of~(\ref{Eq:CsystemsComparisionCommute}) must agree on all of $\delta(\cC_{X'})$ as desired.
\end{proof}

Our next Lemma shows that $\le_{\cC}$ extends to \Ccomp s:

\begin{lemma}\label{Csystem:leCextendsToCcomps}
If $\cC^\alpha$ and $\cC^\delta$ are two \Csys s with $\cC^\alpha\le_{\cC}\cC^\delta$, then we have
$\id_G\cup\psi^{\delta\alpha}\colon\delta G\ge\alpha G$.
\end{lemma}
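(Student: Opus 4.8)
The plan is to verify directly that the candidate map $f:=\id_G\cup\psi^{\delta\alpha}\colon\delta G\to\alpha G$ is a well-defined continuous map witnessing $\delta G\ge\alpha G$. Well-definedness requires no work: $\psi^{\delta\alpha}$ sends $\cI^\delta$ to $\cI^\alpha$ and $\delta G=G\sqcup\cI^\delta$, $\alpha G=G\sqcup\cI^\alpha$. That $f$ commutes with the two embeddings of $\VG$ is also immediate: on $G$ it is the identity, and on $\Omega$ we have already recorded that $\psi^{\delta\alpha}\circ\iota^\delta=\iota^\alpha$, so under our convention of treating \Ocomp s as \comp s of $\VG$ the required triangle commutes. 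Thus everything reduces to proving that $f$ is continuous, and I would split this according to whether the point lies in $G$ or in $\cI^\delta$.

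At a point $x\in G$ continuity is automatic, since $G$ is open in both $\delta G$ and $\alpha G$ and $f\rest G=\id_G$: given any open $U\ni x=f(x)$ in $\alpha G$, the set $U\cap G$ is an open neighbourhood of $x$ in $\delta G$ with $f[U\cap G]\subseteq U$. So it remains to check continuity at an arbitrary $\xi\in\cI^\delta$. Fix a basic open neighbourhood $\cO_{\alpha G}(X,O)$ of $f(\xi)=\psi^{\delta\alpha}(\xi)$; since $\psi^{\delta\alpha}(\xi)$ lies in $\cI^\alpha$, which is disjoint from $G$, this forces $\psi^{\delta\alpha}(\xi)\in(\pi_X^\alpha)^{-1}(O)$, hence $\pi_X^\alpha(\psi^{\delta\alpha}(\xi))\in O$.

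The key point is the set equality $\delta_X^{-1}(f_X^{-1}(O))=\alpha_X^{-1}(O)=:\cC$, which is immediate from $f_X\circ\delta_X=\alpha_X$, together with the relation $\pi_X^\alpha\circ\psi^{\delta\alpha}=f_X\circ\pi_X^\delta$, which is immediate from the definition of $\psi^{\delta\alpha}$. The former shows that $\cO_{\delta G}(X,f_X^{-1}(O))$ is a legitimate basic open set of $\delta G$ whose part lying in $G$, namely $\medcup\cC\cup\mathring{E}(X,\medcup\cC)$, is exactly that of $\cO_{\alpha G}(X,O)$; the latter, applied to $\xi$, gives $\pi_X^\delta(\xi)\in f_X^{-1}(O)$, so this set is indeed a neighbourhood of $\xi$ (and $f_X^{-1}(O)$ is open in $\delta(\cC_X)$ by continuity of $f_X$). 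Since $f$ is the identity on $G$ and, by the second relation, maps $(\pi_X^\delta)^{-1}(f_X^{-1}(O))$ into $(\pi_X^\alpha)^{-1}(O)$, we get $f\big[\cO_{\delta G}(X,f_X^{-1}(O))\big]\subseteq\cO_{\alpha G}(X,O)$, establishing continuity at $\xi$ and finishing the argument. I do not expect a genuine obstacle here; the only care needed is bookkeeping — confirming that $\cO_{\delta G}(X,f_X^{-1}(O))$ really has the form prescribed by the definition of the topology on $\delta G$, and keeping the two inverse-limit projections straight via $\pi_X^\alpha\circ\psi^{\delta\alpha}=f_X\circ\pi_X^\delta$.
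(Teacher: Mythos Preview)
Your proposal is correct and follows essentially the same route as the paper: both arguments hinge on the two identities $\alpha_X^{-1}(O)=\delta_X^{-1}(f_X^{-1}(O))$ and $\pi_X^\alpha\circ\psi^{\delta\alpha}=f_X\circ\pi_X^\delta$. The only cosmetic difference is that the paper packages these into the single equality $\psi^{-1}\big(\cO_{\alpha G}(X,O)\big)=\cO_{\delta G}(X,f_X^{-1}(O))$ and reads off continuity directly, whereas you check continuity pointwise; the content is the same.
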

\begin{proof}
We write $\psi$ for the map $\id_G\cup\psi^{\delta\alpha}$.
Since $\psi$ fixes $\Omega$, it remains to verify continuity of $\psi$.
For this, let any basic open set of $\alpha G$ be given; we may assume that it is of the form $\cO_{\alpha G}(X,O)$.
We claim that
\begin{align}\label{eq:preimageCheck}
\psi^{-1}(\cO_{\alpha G}(X,O))=\cO_{\delta G}(X,f_X^{-1}(O))
\end{align}
holds where $f_X\colon (\delta (\cC_X),\delta_X)\ge (\alpha (\cC_X),\alpha_X)$.
To see this we first note that both sides of~(\ref{eq:preimageCheck}) agree on $G$ due to $\alpha_X^{-1}(O)=\delta_X^{-1}(f_X^{-1}(O))$.
And second we note that the diagram
\begin{equation*}
\begin{tikzcd}
\cI^\delta\arrow[r,"\pi_X^\delta"]\arrow[d,"\psi^{\delta\alpha}"',two heads] & \delta(\cC_X)\arrow[d,"f_X",two heads]\\
\cI^\alpha\arrow[r,"\pi_X^\alpha"'] & \alpha(\cC_X)
\end{tikzcd}
\end{equation*}
commutes, resulting in
\begin{align*}
(\psi^{\delta\alpha})^{-1}\big( (\pi_X^\alpha)^{-1}(O)\big)=(\pi_X^\delta)^{-1}\big( f_X^{-1}(O)\big)
\end{align*}
which shows that both sides of~(\ref{eq:preimageCheck}) agree on $\cI^\delta$.
\end{proof}

\begin{definition}
If $\cC^\alpha$ and $\cC^\delta$ are two $\cC$-systems with both $\cC^\alpha\le_{\cC}\cC^\delta$ and $\cC^\delta\le_{\cC}\cC^\alpha$, then we say that $\cC^\alpha$ and $\cC^\delta$ are \emph{$\cC$-equivalent}.
\end{definition}

Using Lemma~\ref{Top:compactification:witnessBehaviour} it is not hard to show that

\begin{lemma}\label{Csystem:psiDeltaAlphaHomeo}
If $\cC^\alpha$ and $\cC^\delta$ are two $\cC$-equivalent $\cC$-systems, then both $\psi^{\delta\alpha}$ and $\psi^{\alpha\delta}$ are homeomorphisms and each other's inverse.\qed
\end{lemma}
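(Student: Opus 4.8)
The plan is to deduce everything from Lemma~\ref{Top:compactification:witnessBehaviour}(ii) applied at the level of the \Ccomp s $\alpha G$ and $\delta G$, and then transport the resulting homeomorphism back to the inverse limits $\cI^\alpha$ and $\cI^\delta$. First I would invoke Lemma~\ref{Csystem:leCextendsToCcomps} twice: since $\cC^\alpha\le_{\cC}\cC^\delta$ we get $\id_G\cup\psi^{\delta\alpha}\colon\delta G\ge\alpha G$, and since $\cC^\delta\le_{\cC}\cC^\alpha$ we get $\id_G\cup\psi^{\alpha\delta}\colon\alpha G\ge\delta G$. By Theorem~\ref{Csystem:EveryCsystemInducesOmegaComp} both $\alpha G$ and $\delta G$ are \Ocomp s, so $\alpha G\setminus\mathring E$ and $\delta G\setminus\mathring E$ are \HDcomp s of $\VG\setminus\mathring E$; restricting the two witness maps to the complements of $\mathring E$ and applying Lemma~\ref{Top:compactification:witnessBehaviour}(ii) shows that $\alpha G\setminus\mathring E$ and $\delta G\setminus\mathring E$ are topologically equivalent, and the equivalence is realised by $(\id_G\cup\psi^{\delta\alpha})\rest(\delta G\setminus\mathring E)$ — this is because, $\alpha G\setminus\mathring E$ being \HD, the witness of $\alpha G\setminus\mathring E\le\delta G\setminus\mathring E$ is unique, hence must be this restriction, which is therefore a homeomorphism. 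Restricting this homeomorphism further to the remainder $\cI^\delta=\delta G\setminus\VG\subseteq\delta G\setminus\mathring E$ shows, via Lemma~\ref{Top:compactification:witnessBehaviour}(i), that $\psi^{\delta\alpha}$ maps $\cI^\delta$ homeomorphically onto $\cI^\alpha$.

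Next I would identify the inverse of $\psi^{\delta\alpha}$ with $\psi^{\alpha\delta}$. For each $X\in\cX$ the maps $f_X\colon\delta(\cC_X)\ge\alpha(\cC_X)$ and $g_X\colon\alpha(\cC_X)\ge\delta(\cC_X)$ witnessing $\cC^\alpha\le_{\cC}\cC^\delta$ and $\cC^\delta\le_{\cC}\cC^\alpha$ satisfy $f_X\circ\delta_X=\alpha_X$ and $g_X\circ\alpha_X=\delta_X$, so $g_X\circ f_X\circ\delta_X=\delta_X$ and $f_X\circ g_X\circ\alpha_X=\alpha_X$; since $\delta_X[\cC_X]$ and $\alpha_X[\cC_X]$ are dense and the target spaces are \HD, it follows that $g_X\circ f_X=\id_{\delta(\cC_X)}$ and $f_X\circ g_X=\id_{\alpha(\cC_X)}$. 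Applying this coordinatewise to the formula defining $\psi^{\delta\alpha}$ and $\psi^{\alpha\delta}$ gives $\psi^{\alpha\delta}\circ\psi^{\delta\alpha}=\id_{\cI^\delta}$ and $\psi^{\delta\alpha}\circ\psi^{\alpha\delta}=\id_{\cI^\alpha}$. Combined with the previous paragraph (which already gives that $\psi^{\delta\alpha}$ is a homeomorphism), this shows $\psi^{\alpha\delta}$ is its inverse, and symmetrically $\psi^{\alpha\delta}$ is a homeomorphism too.

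The only mildly delicate point — and the one I would be most careful about — is the bookkeeping that ensures we may actually pass from equivalence of the \Ccomp s to the statement about the inverse limits: one must check that the witness produced by Lemma~\ref{Top:compactification:witnessBehaviour} is exactly $\id_G\cup\psi^{\delta\alpha}$ and not merely some abstract homeomorphism, which is where the uniqueness of witnesses into \HD\ spaces (cf.~\cite[Corollary 13.14]{Willard}) does the work. Everything else is a routine density-plus-Hausdorffness argument of the kind already used repeatedly in Section~\ref{sec6}, so the lemma really is as easy as the excerpt claims; in fact the coordinatewise argument of the second paragraph alone suffices to prove the statement, with the first paragraph serving only to make the connection to the compactifications explicit.
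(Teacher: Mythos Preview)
Your proposal is correct, and in fact your second paragraph alone is precisely the argument the paper has in mind: apply Lemma~\ref{Top:compactification:witnessBehaviour}~(ii) (or equivalently the density-plus-\HD\ argument you spell out) coordinatewise to conclude that each $f_X$ and $g_X$ are mutually inverse homeomorphisms, and then combine. Since $\psi^{\delta\alpha}$ and $\psi^{\alpha\delta}$ are already known to be continuous, this immediately gives the result.

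Your first paragraph, by contrast, takes a detour through the \Ocomp s $\alpha G$ and $\delta G$: you essentially prove Corollary~\ref{Csystem:CeqImpliesTopEq} first (via Lemma~\ref{Csystem:leCextendsToCcomps} and Lemma~\ref{Top:compactification:witnessBehaviour} applied to $\alpha G\setminus\mathring E$ and $\delta G\setminus\mathring E$) and then read off Lemma~\ref{Csystem:psiDeltaAlphaHomeo} by restricting to the remainder. This is valid but reverses the paper's logical order, in which Lemma~\ref{Csystem:psiDeltaAlphaHomeo} is the input to Corollary~\ref{Csystem:CeqImpliesTopEq} rather than the other way round. As you yourself note, the coordinatewise argument is self-contained and makes the detour unnecessary; I would simply drop the first paragraph.
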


\begin{corollary}\label{Csystem:CeqImpliesTopEq}
If $\cC^\alpha$ and $\cC^\delta$ are two $\cC$-equivalent $\cC$-systems, then $\alpha G$ and $\delta G$ are topologically equivalent, witnessed by the homeomorphism $\id_G\cup\psi^{\delta\alpha}$ and its inverse $\id_G\cup\psi^{\alpha\delta}$.
\end{corollary}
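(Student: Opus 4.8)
The plan is simply to assemble the three preceding lemmas. First I would apply Lemma~\ref{Csystem:leCextendsToCcomps} twice: from the hypothesis $\cC^\alpha\le_{\cC}\cC^\delta$ it yields $\id_G\cup\psi^{\delta\alpha}\colon\delta G\ge\alpha G$, and from $\cC^\delta\le_{\cC}\cC^\alpha$ it yields $\id_G\cup\psi^{\alpha\delta}\colon\alpha G\ge\delta G$. In particular both of these maps are continuous and restrict to the identity on $G$, and they fix $\Omega$ because $\psi^{\delta\alpha}\circ\iota^\delta=\iota^\alpha$ and, symmetrically, $\psi^{\alpha\delta}\circ\iota^\alpha=\iota^\delta$.

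Next I would invoke Lemma~\ref{Csystem:psiDeltaAlphaHomeo}: since $\cC^\alpha$ and $\cC^\delta$ are $\cC$-equivalent, the maps $\psi^{\delta\alpha}\colon\cI^\delta\to\cI^\alpha$ and $\psi^{\alpha\delta}\colon\cI^\alpha\to\cI^\delta$ are homeomorphisms and each other's inverse. Combining this with the previous step, on $\delta G=G\sqcup\cI^\delta$ the composite $(\id_G\cup\psi^{\alpha\delta})\circ(\id_G\cup\psi^{\delta\alpha})$ is the identity on $G$ and on $\cI^\delta$, and symmetrically on $\alpha G$. Hence $\id_G\cup\psi^{\delta\alpha}$ is a continuous bijection $\delta G\to\alpha G$ whose inverse $\id_G\cup\psi^{\alpha\delta}$ is likewise continuous, so it is a homeomorphism. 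Since it commutes with the embeddings of $\VG$ into $\delta G$ and $\alpha G$ (being the identity on $G$ and $\Omega$), it witnesses the topological equivalence of $\alpha G$ and $\delta G$ in the sense of the definition; alternatively one could simply quote Lemma~\ref{Top:compactification:witnessBehaviour}(ii) from the two $\ge$-relations, but tracking the witness explicitly as above is what identifies it as $\id_G\cup\psi^{\delta\alpha}$ with inverse $\id_G\cup\psi^{\alpha\delta}$, as claimed.

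I do not expect any real obstacle: the argument is a routine bookkeeping of maps already constructed. The only points requiring a moment's care are that $\id_G\cup\psi^{\delta\alpha}$ is genuinely a well-defined map on all of $\delta G$ (immediate, as $\delta G=G\sqcup\cI^\delta$ is a disjoint union, so the two pieces have no overlap on which to disagree) and that the two continuous maps produced by Lemma~\ref{Csystem:leCextendsToCcomps} are inverse to each other rather than merely both being witnesses — but that is exactly what Lemma~\ref{Csystem:psiDeltaAlphaHomeo} supplies on the remainders, and the identity on $G$ takes care of the rest.
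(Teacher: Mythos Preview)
Your proposal is correct and follows exactly the paper's approach: the paper's proof is the single line ``We combine Lemma~\ref{Csystem:leCextendsToCcomps} and Lemma~\ref{Csystem:psiDeltaAlphaHomeo}'', and you have simply unpacked this combination in detail. The additional care you take about well-definedness on $G\sqcup\cI^\delta$ and about the map respecting the embedding of $\VG$ (via $\psi^{\delta\alpha}\circ\iota^\delta=\iota^\alpha$) is appropriate and not in tension with anything in the paper.
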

\begin{proof}
We combine Lemma~\ref{Csystem:leCextendsToCcomps} and Lemma~\ref{Csystem:psiDeltaAlphaHomeo}.
\end{proof}

\section{Critical vertex sets give rise to the coarsest \texorpdfstring{$\cC$}{C}-compactification}
\label{sec7}

Our aim in this section is to find the coarsest \Ccomp . Surprisingly, critical vertex sets will lead the way.
In Section~\ref{sec:EndsCritAsTangles} we constructed an inverse system $\{\finC{X},\eb_{X',X}\}$ (cf.~p.~\pageref{eb_X'Xdef}) giving the $\aleph_0$-tangles of $\St$, i.e. with $\invLim{}\finC{X}=\F=\Tt$.
We have seen in Proposition~\ref{Tangles:F(T,X)} that every $\aleph_0$-tangle of $\St$ induces, for every $X\in\cX$, a particular type of filter on the poset $\Pt{X}$ of all tame subsets of $\cC_X$: the up-closure (in $\Pt{X}$) either of a singleton $\{C\}\subseteq\cC_X$ or of the cofinite filter on $\cC_X(Y)$ for some critical $Y\subseteq X$.
With this in mind, we equip the sets $\finC{X}$ with a topology that turns their inverse system into a \Csys :

Given $X\in\cX$ we endow $\finC{X}=\cC_X\sqcup\crit(X)$ with the topology obtained by declaring as open in addition to the open sets of the discrete component space $\cC_X$, for all $Y\in\crit(X)$ and all cofinite subsets $\cC$ of $\cC_X(Y)$, the sets
\begin{align*}
\cO_{\finC{X}}(Y,\cC):=\cC\sqcup\{Y\}
\end{align*}
and taking the topology on $\finC{X}$ this generates.

\begin{lemma}\label{finCXareCompactHDtotallyDisc}
Every $\finC{X}$ is a finite Hausdorff compactification of $\cC_X$.\qed
\end{lemma}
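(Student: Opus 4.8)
The plan is to recognise $\finC{X}$ as a finite topological sum of spaces we already understand, and then to read off all the required properties. By Lemma~\ref{powersetInducesFinPart} the power set of $X$ partitions $\cC_X$ into the finitely many classes $\cC_X(Y)$ with $Y\in 2^X$, and by Lemma~\ref{critXchar} the infinite classes among these are precisely the $\cC_X(Y)$ with $Y\in\crit(X)$. Hence $\cC_X=\cC_X^-\sqcup\bigsqcup_{Y\in\crit(X)}\cC_X(Y)$ with $\cC_X^-$ finite (Notation~\ref{CXminus}), and I claim that, as a topological space,
\begin{align*}
\finC{X}=\cC_X^-\;\sqcup\;\bigsqcup_{Y\in\crit(X)}\big(\cC_X(Y)\sqcup\{Y\}\big),
\end{align*}
the topological sum of the finite discrete space $\cC_X^-$ with the one-point compactifications $\cC_X(Y)\sqcup\{Y\}$ of the infinite discrete spaces $\cC_X(Y)$ (cf.\ the construction in Section~\ref{sec2}).

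First I would verify this identification of topologies. The collection generating the topology of $\finC{X}$ consists of the subsets of the discrete space $\cC_X$ together with the sets $\cO_{\finC{X}}(Y,\cC)=\cC\sqcup\{Y\}$, one for each $Y\in\crit(X)$ and each cofinite $\cC\subseteq\cC_X(Y)$; these are precisely the basic open sets of the displayed topological sum, since a subset of $\cC_X(Y)\sqcup\{Y\}$ is open there exactly when it is an open subset of $\cC_X(Y)$ or a cofinite set containing $Y$. To see that this collection really is a basis (not merely a subbasis) one checks that finite intersections of its members are, around each of their points, again members: the intersection of two subsets of $\cC_X$ is a subset of $\cC_X$; the intersection of such a subset with some $\cO_{\finC{X}}(Y,\cC)$ is again a subset of $\cC_X$; and the intersection of $\cO_{\finC{X}}(Y,\cC)$ with $\cO_{\finC{X}}(Y',\cC')$ equals $\cO_{\finC{X}}(Y,\cC\cap\cC')$ if $Y=Y'$ (using that $\cC\cap\cC'$ is again cofinite in $\cC_X(Y)$) and is empty if $Y\neq Y'$ (using $\cC_X(Y)\cap\cC_X(Y')=\emptyset$). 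Thus the topology generated on $\finC{X}$ is exactly the topological sum topology.

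With the identification in place the lemma follows from standard facts. Each $\cC_X(Y)\sqcup\{Y\}$ with $Y\in\crit(X)$ is compact Hausdorff, being the one-point compactification of a locally compact Hausdorff, non-compact discrete space, and $\cC_X^-$ is a finite discrete, hence compact Hausdorff, space; a finite topological sum of compact Hausdorff spaces is compact Hausdorff, so $\finC{X}$ is compact Hausdorff. Inside it, $\cC_X$ carries the discrete subspace topology (each singleton $\{C\}$ with $C\in\cC_X$ is open in $\finC{X}$), it is open as a union of basic open sets, and it is dense: every non-empty basic open set meets $\cC_X$, since the only basic open sets potentially disjoint from $\cC_X$ would be the singletons $\{Y\}$, and these are not basic open because every basic neighbourhood $\cO_{\finC{X}}(Y,\cC)$ of $Y$ contains the non-empty cofinite set $\cC\subseteq\cC_X(Y)$. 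Finally the remainder $\finC{X}\setminus\cC_X=\crit(X)\subseteq 2^X$ is finite, so $\finC{X}$ together with the inclusion of $\cC_X$ is a finite Hausdorff compactification of $\cC_X$. I do not expect a genuine obstacle here; the only mildly fiddly point will be the bookkeeping needed to confirm that the declared generating sets form a basis for the topological sum topology, i.e.\ the case distinction $Y=Y'$ versus $Y\neq Y'$ carried out above.
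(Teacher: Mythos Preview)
Your argument is correct. The paper itself omits the proof entirely (the lemma is stated with a \qed\ and no argument), so there is nothing to compare on the level of strategy; your identification of $\finC{X}$ with the finite topological sum $\cC_X^-\sqcup\bigsqcup_{Y\in\crit(X)}\omega(\cC_X(Y))$ is exactly the picture the authors have in mind, and your bookkeeping for the basis and the density is accurate.
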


\begin{lemma}
The maps $\eb_{X',X}\colon\finC{X'}\to\finC{X}$ are continuous.
\end{lemma}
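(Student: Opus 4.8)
The plan is to verify continuity of $\eb:=\eb_{X',X}$ directly, by checking that the $\eb$-preimage of every basic open set of $\finC{X}$ is open in $\finC{X'}$. First I would note that the generating family for the topology of $\finC{X}$ — the subsets of $\cC_X$ together with the sets $\cO_{\finC{X}}(Y,\cC)=\cC\sqcup\{Y\}$ for $Y\in\crit(X)$ and $\cC$ cofinite in $\cC_X(Y)$ — is already closed under finite intersections (the intersection of $\cO_{\finC{X}}(Y,\cC)$ with $\cO_{\finC{X}}(Y',\cC')$ for $Y\neq Y'$ lands inside the discrete part $\cC_X$, and for $Y=Y'$ it equals $\cO_{\finC{X}}(Y,\cC\cap\cC')$), so it is a basis. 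Since preimages commute with unions, it then suffices to show that $\eb^{-1}(\{C\})$ is open for every $C\in\cC_X$, and that $\eb^{-1}\big(\cO_{\finC{X}}(Y,\cC)\big)$ is open for every basic neighbourhood of a critical vertex set $Y\in\crit(X)$.

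The computation rests on two elementary facts about how $\eb$ (equivalently $\cb_{X',X}$) interacts with the partition $\{\cC_X(Y)\mid Y\in 2^X\}$. The first: for $Z\in\crit(X')\setminus\crit(X)$ — that is, $Z\in\crit(G)$ with $Z\subseteq X'$ but $Z\not\subseteq X$ — every component in $\cC_{X'}(Z)$ avoids $X$ and lies in $\cC_Z(Z)$, hence, by the observation made just before the definition of $C_X(\cdot)$, is contained in the component $C_X(Z)$ of $G-X$; thus $\cb_{X',X}[\cC_{X'}(Z)]=\{C_X(Z)\}$, the set $\cC_{X'}(Z)=\{C\in\cC_Z(Z):C\cap X'=\emptyset\}$ being non-empty as a cofinite subset of the infinite set $\cC_Z(Z)$. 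The second: for $Y\in\crit(X)$ we have $Y\subseteq X\subseteq X'$, hence $Y\in\crit(X')$ too; moreover, using the identity from the proof of Lemma~\ref{critXchar}, $\cC_{X'}(Y)=\{C\in\cC_Y(Y):C\cap X'=\emptyset\}$ is a cofinite subset of $\cC_X(Y)=\{C\in\cC_Y(Y):C\cap X=\emptyset\}$ (their difference lies in the finite set of components meeting $X'\setminus X$), and since every such component has neighbourhood $Y\subseteq X$ it is already a component of $G-X$, so $\cb_{X',X}$ restricts to the identity on $\cC_{X'}(Y)$.

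Granting these, the two cases become bookkeeping. For $C\in\cC_X$ one computes $\eb^{-1}(\{C\})=\cb_{X',X}^{-1}(C)\cup\{\,Z\in\crit(X')\setminus\crit(X):C_X(Z)=C\,\}$, which is the union of the open singletons in $\cb_{X',X}^{-1}(C)$ with the open neighbourhoods $\cO_{\finC{X'}}(Z,\cC_{X'}(Z))$ of the listed critical sets $Z$ (each contained in $\eb^{-1}(\{C\})$ by the first fact). For $\cO_{\finC{X}}(Y,\cC)=\cC\sqcup\{Y\}$ one has $\eb^{-1}(\{Y\})=\{Y\}$ and $\eb^{-1}(\cC)=\cb_{X',X}^{-1}(\cC)\cup\{\,Z\in\crit(X')\setminus\crit(X):C_X(Z)\in\cC\,\}$, so $\eb^{-1}\big(\cO_{\finC{X}}(Y,\cC)\big)$ is a union of open singletons, of neighbourhoods $\cO_{\finC{X'}}(Z,\cC_{X'}(Z))$ (again inside the preimage by the first fact), and of the single neighbourhood $\cO_{\finC{X'}}\big(Y,\cC\cap\cC_{X'}(Y)\big)$ of $Y$. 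The one step that needs a moment's care — and the only genuine obstacle — is continuity at an \emph{old} critical point $Y\in\crit(X)$: one must shrink the canonical neighbourhood $\cC_{X'}(Y)\sqcup\{Y\}$ of $Y$ in $\finC{X'}$ so that its component part is swallowed by $\cb_{X',X}^{-1}(\cC)$, and the second fact makes this transparent, since $\cC\cap\cC_{X'}(Y)$ is still cofinite in $\cC_{X'}(Y)$ and $\cb_{X',X}$ maps it into $\cC$. Everything else is routine manipulation of the finite partitions $\{\cC_X(Y)\mid Y\in 2^X\}$.
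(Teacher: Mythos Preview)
Your proof is correct and rests on the same two observations as the paper's: that for $Z\in\crit(X')\setminus\crit(X)$ all of $\cC_{X'}(Z)$ is sent to the single component $C_X(Z)$, and that for $Y\in\crit(X)$ the set $\cC\cap\cC_{X'}(Y)$ is cofinite in $\cC_{X'}(Y)$ and mapped identically into $\cC$. The only difference is framing: the paper verifies continuity pointwise (for each $\xi\in\finC{X'}$ it exhibits a basic neighbourhood mapping into a given basic neighbourhood of $\eb(\xi)$), whereas you compute full preimages of basic open sets; this forces you to first argue that the generating family is a basis, which the pointwise approach does not need, but otherwise the arguments are interchangeable.
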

\begin{proof}
For this, let any $\xi\in\finC{X'}$ be given together with a basic open neighbourhood $O$ of $\eb_{X',X}(\xi)$ in $\finC{X}$. We check two cases:

First, we suppose that $\xi$ is a component $C'$ of $G-X'$. 
Then $\eb_{X',X}$ sends the open neighbourhood $\{C'\}$ of $C'$ into $O$.

Second, we suppose that $\xi$ is a critical subset $Y$ of $X'$.
If $Y\notin\crit(X)$ then $\eb_{X',X}(Y)=C_X(Y)$ is the component of $G-X$ that includes $\bigcup\cC_{X'}(Y)$, so $\eb_{X',X}$ sends $\cO_{\finC{X'}}(Y,\cC_{X'}(Y))$ into $\{C_X(Y)\}\subseteq O$.
Otherwise $Y\in\crit(X)$ results in $Y=\eb_{X',X}(Y)$.
Then $O$ is of the form $\cO_{\finC{X}}(Y,\cC)$ for some cofinite subset $\cC$ of $\cC_X(Y)$. Hence $\cC':=\cC\cap\cC_{X'}$ is a cofinite subset of $\cC_{X'}(Y)$ with $\cb_{X',X}[\cC']\subseteq\cC$, so $\cO_{\finC{X'}}(Y,\cC')$ is an open neighbourhood of $Y$ which $\eb_{X',X}$ sends into $O$.
\end{proof}

Altogether we have shown that
\begin{proposition}
$\{\finC{X},\eb_{X',X}\}$ is a \Csys .\qed
\end{proposition}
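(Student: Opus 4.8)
The plan is to read off the three requirements in the definition of a \Csys\ and verify each in turn, leaning on the two lemmas just established. Recall that a \Csys\ consists of \HDcomp s $(\alpha(\cC_X),\alpha_X)$ of the discrete spaces $\cC_X$ whose underlying bonding maps $\fraka_{X',X}$ form an inverse system over $\cX$ and satisfy the commuting square~(\ref{Eq:CsystemCommute}). The first requirement is immediate from Lemma~\ref{finCXareCompactHDtotallyDisc}: each $\finC{X}$ is a \HDcomp\ of the discrete space $\cC_X$, the embedding being the inclusion $\cC_X\hookrightarrow\finC{X}$, which does restrict the topology of $\finC{X}$ back to the discrete one since every $\{C\}$ is open in $\finC{X}$.

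Second, I would check that the $\eb_{X',X}$ form an inverse system. Continuity is the lemma immediately preceding this proposition, and $\eb_{X,X}$ is the identity on $\finC{X}$ because it restricts to $\cb_{X,X}=\id$ on $\cC_X$ and to the identity on $\crit(X)$. The substantive point is transitivity, $\eb_{X',X}\circ\eb_{X'',X'}=\eb_{X'',X}$ for all $X\subseteq X'\subseteq X''$. On the component part $\cC_{X''}$ both sides restrict to $\cb_{X',X}\circ\cb_{X'',X'}=\cb_{X'',X}$, so there is nothing to do there. On a critical vertex set $Y\in\crit(X'')$ I would distinguish three exhaustive cases, using that $Y\in\crit(G)$ makes $Y\in\crit(Z)$ equivalent to $Y\subseteq Z$ for every $Z\in\cX$ (Lemma~\ref{critXchar}): if $Y\subseteq X$, all three maps fix $Y$; if $Y\subseteq X'$ but $Y\not\subseteq X$, then $\eb_{X'',X'}$ fixes $Y$ while $\eb_{X',X}(Y)=C_X(Y)=\eb_{X'',X}(Y)$; and if $Y\not\subseteq X'$, then $\eb_{X'',X'}(Y)=C_{X'}(Y)$ and it remains to identify $\cb_{X',X}(C_{X'}(Y))$ with $C_X(Y)$.

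This last identity is the only step requiring genuine care, and so the main (if modest) obstacle. The argument I have in mind is that $C_{X'}(Y)$ is a connected subgraph of $G-X'$, hence of $G-X$ since $X\subseteq X'$, and that it meets $Y$ because it contains the non-empty set $Y\setminus X'$; therefore $C_{X'}(Y)$ lies in the unique component of $G-X$ meeting $Y$, which is precisely $C_X(Y)$ by the definition of these components recorded on p.~\pageref{eb_X'Xdef}. Since the unique component of $G-X$ containing $C_{X'}(Y)$ is by definition $\cb_{X',X}(C_{X'}(Y))$, the claimed equality follows, completing the transitivity check.

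Finally, condition~(\ref{Eq:CsystemCommute}) is immediate in this setting: the embeddings are the inclusions $\cC_X\hookrightarrow\finC{X}$, and $\eb_{X',X}$ agrees with $\cb_{X',X}$ on $\cC_{X'}$ by construction, so $\eb_{X',X}$ precomposed with $\cC_{X'}\hookrightarrow\finC{X'}$ equals $\cb_{X',X}$ followed by $\cC_X\hookrightarrow\finC{X}$. Assembling these three points shows that $\{\finC{X},\eb_{X',X}\}$ is a \Csys.
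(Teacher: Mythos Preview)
Your proposal is correct and follows the same approach as the paper, which marks the proposition with \qed\ and simply states that it follows from the two preceding lemmas (that each $\finC{X}$ is a \HDcomp\ of $\cC_X$, and that the maps $\eb_{X',X}$ are continuous) together with the fact, already recorded in Section~\ref{sec5}, that $\{\finC{X},\eb_{X',X},\cX\}$ is an inverse system. You additionally spell out the transitivity check that the paper left implicit in Section~\ref{sec5}; this is a welcome elaboration, not a different route.
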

\begin{notation}
We write $\minC$ for the \Csys\ $\{\finC{X},\eb_{X',X}\}$ and we write $\minG$ for the \Ccomp\ of $G$ which it induces by Theorem~\ref{Csystem:EveryCsystemInducesOmegaComp}.
\end{notation}
We obtain an analogue of Diestel's Theorem~\ref{DiestelsTangleCompWorks} for our $\minG$:
\begin{theorem}
Let $G$ be any graph.
\begin{enumerate}
\item $\minG$ is an $\Omega$-compactification of $G$ and $\minG\setminus G$ is totally disconnected.
\item If $G$ is locally finite and connected, then $\minG=\VG$ coincides with the Freudenthal compactification of $G$.
\end{enumerate}
\end{theorem}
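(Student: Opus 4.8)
The plan is that both parts follow quickly from the machinery already set up, so I would keep the argument short.

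\emph{Part (i).} Since we have just verified that $\minC=\{\finC{X},\eb_{X',X}\}$ is a $\cC$-system, Theorem~\ref{Csystem:EveryCsystemInducesOmegaComp} immediately gives that $\minG$ is an $\Omega$-compactification of $G$; and by the construction of $\minG$ its remainder $\minG\setminus G$ is the inverse limit $\F=\invLim\finC{X}$, carrying the inverse limit topology. So it only remains to show that $\F$ is totally disconnected. I would do this by first checking that each $\finC{X}$ is zero-dimensional and then invoking the standard fact that an inverse limit of zero-dimensional compact Hausdorff spaces is again zero-dimensional (the preimages $\pi_X^{-1}(O)$ of clopen $O\subseteq\finC{X}$ form a clopen basis of $\F$). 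Here $\finC{X}$ is compact Hausdorff by Lemma~\ref{finCXareCompactHDtotallyDisc}, and zero-dimensionality is immediate from the definition of its topology: each component $C\in\cC_X$ is an isolated point, hence clopen, and for every $Y\in\crit(X)$ and cofinite $\cC\subseteq\cC_X(Y)$ the basic neighbourhood $\cO_{\finC{X}}(Y,\cC)=\cC\sqcup\{Y\}$ is clopen because its complement in $\finC{X}$ is the union of the remaining isolated points together with the finitely many open sets $\cO_{\finC{X}}(Y',\cC_X(Y'))$ for $Y'\in\crit(X)\setminus\{Y\}$, using that distinct sets $\cC_X(Y')$ are disjoint and that $\crit(X)$ is finite.

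\emph{Part (ii).} Assume $G$ is locally finite and connected. Then $G$ is \sol : for finite $X$ every component of $G-X$ must send an edge to $X$, so $\cC_X$ is finite since $X$ is finite and all degrees are finite. In particular $\cC_X(X)$ is finite for every $X\in\cX$, so by Definition~\ref{criticalDef} the graph has no critical vertex sets at all; thus $\crit(X)=\emptyset$ and $\finC{X}=\cC_X$ with the discrete topology for every $X$. Hence $\minC$ is literally the inverse system $\{\cC_X,\cb_{X',X}\}$, and so $\F=\invLim\cC_X=\Omega$ by Theorem~\ref{EndsAreDirections}. Finally I would unwind the definition of the basic open sets $\cO_{\minG}(X,O)$: here the embedding $\cC_X\hookrightarrow\finC{X}$ is the identity, and under the identification $\Omega=\invLim\cC_X$ the set $\pi_X^{-1}(O)$ equals $\Omega(X,O)$, so $\cO_{\minG}(X,\cC)$ coincides with $\cO_{\VG}(X,\cC)$ for all $X\in\cX$ and $\cC\subseteq\cC_X$. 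Therefore $\minG$ and $\VG$ have the same basic open sets, i.e.\ $\minG=\VG$; and for locally finite connected $G$ the space $\VG$ is the Freudenthal compactification, as recalled in Section~\ref{sec2}.

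The only place where a little care is needed is the total disconnectedness in Part (i): it should be derived intrinsically from the clopen basis of $\F$, not transported from Diestel's totally disconnected $\modG$, since $\F=\Tt$ is only a continuous quotient image of the remainder of $\modG$ and continuous images (even quotients) need not preserve total disconnectedness. Everything else is routine bookkeeping with the definitions already in place.
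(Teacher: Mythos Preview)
Your proof is correct and follows essentially the same route as the paper: show that each $\finC{X}$ is totally disconnected and pass to the inverse limit, with part~(ii) reducing to $\finC{X}=\cC_X$ once one notes that locally finite connected graphs are \sol\ and hence have no critical vertex sets. The paper's version is terser---it simply cites Lemma~\ref{finCXareCompactHDtotallyDisc} for the total disconnectedness of $\finC{X}$ (note the label) and leaves part~(ii) implicit---whereas you spell out a clopen basis for $\finC{X}$ and unwind the identification $\minG=\VG$ explicitly; your added remark that total disconnectedness should not be transported from $\modG$ via the quotient map is a valid caution, though not needed for the argument.
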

\begin{proof}
The $\finC{X}$ are totally disconnected by Lemma~\ref{finCXareCompactHDtotallyDisc} and so is $\Gamma=\invLim{}\Gamma_X$.
\end{proof}

The next two lemmas are all we need to show that $\minC$ is the least \Csys :

\begin{lemma}\label{CsystemMinimalityInductionStep}
If $\cC^\alpha$ is a \Csys , then 
\begin{align}\label{Eq:distinctCritXDisjointClosures}
\overline{\alpha_X[\cC_X(Y)]}\cap\overline{\alpha_X[\cC_X(Y')]}=\emptyset
\end{align}
holds for all $X\in\cX$ and all distinct $Y,Y'\in\crit(X)$.
\end{lemma}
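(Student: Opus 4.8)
The plan is to \emph{descend} in the poset $\cX$: given distinct critical $Y,Y'\subseteq X$ I single out a vertex $v$ that distinguishes $Y$ from $Y'$, pass along the bonding map of $\cC^\alpha$ to the smaller index set $X_0:=X\setminus\{v\}$, and observe that this map sends one of the two families $\cC_X(Y),\cC_X(Y')$ to a single isolated point of $\alpha(\cC_{X_0})$ while keeping the other family away from it; disjointness of the two closures in~(\ref{Eq:distinctCritXDisjointClosures}) then drops out of continuity.

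In detail: since $Y\neq Y'$ the set $Y\triangle Y'$ is non-empty, and as~(\ref{Eq:distinctCritXDisjointClosures}) is symmetric in $Y$ and $Y'$ I may assume $v\in Y\setminus Y'$. Put $X_0:=X\setminus\{v\}$ and consider the bonding map $\fraka_{X,X_0}\colon\alpha(\cC_X)\to\alpha(\cC_{X_0})$, which by~(\ref{Eq:CsystemCommute}) satisfies $\fraka_{X,X_0}\circ\alpha_X=\alpha_{X_0}\circ\cb_{X,X_0}$. The combinatorial core is the claim that $\cb_{X,X_0}[\cC_X(Y)]=\{C_{X_0}(Y)\}$ while $\cb_{X,X_0}[\cC_X(Y')]\subseteq\cC_{X_0}\setminus\{C_{X_0}(Y)\}$. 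For the first, every $C\in\cC_X(Y)$ is joined in $G$ to the vertex $v\in Y\setminus X_0$, and hence lies in the unique component $C_{X_0}(Y)$ of $G-X_0$ meeting $Y$ (note $Y\in\crit(X)\subseteq\crit(G)$ and $Y\not\subseteq X_0$, so $C_{X_0}(Y)$ is well defined and contains $v$). For the second, every $C\in\cC_X(Y')$ has $N_G(C)=Y'\subseteq X_0$, so $V(C)$ already induces a component of $G-X_0$; this component avoids $v\in X$ and therefore differs from $C_{X_0}(Y)$.

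Now write $c$ for the point $\alpha_{X_0}(C_{X_0}(Y))$ of $\alpha(\cC_{X_0})$. Since $\cC_{X_0}$ is discrete and locally compact, it embeds as an open subspace of its Hausdorff compactification, so $\{c\}$ is open in $\alpha(\cC_{X_0})$; being a singleton in a Hausdorff space, $\{c\}$ is also closed. By the displayed claim together with~(\ref{Eq:CsystemCommute}) we obtain $\fraka_{X,X_0}[\alpha_X[\cC_X(Y)]]=\{c\}$ and $\fraka_{X,X_0}[\alpha_X[\cC_X(Y')]]\subseteq\alpha_{X_0}[\cC_{X_0}]\setminus\{c\}$. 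Applying the elementary inclusion $f[\overline{A}]\subseteq\overline{f[A]}$ for continuous $f$ to both of these yields $\fraka_{X,X_0}\big[\overline{\alpha_X[\cC_X(Y)]}\big]\subseteq\{c\}$ and $\fraka_{X,X_0}\big[\overline{\alpha_X[\cC_X(Y')]}\big]\subseteq\overline{\alpha_{X_0}[\cC_{X_0}]\setminus\{c\}}$, and the latter set misses $c$ precisely because $\{c\}$ is open. A point lying in both closures occurring in~(\ref{Eq:distinctCritXDisjointClosures}) would thus be mapped by $\fraka_{X,X_0}$ simultaneously to $c$ and to a point different from $c$, which is absurd; hence the two closures are disjoint. \textbf{The only step that requires an idea is the decision to pass to the smaller index $X\setminus\{v\}$ rather than to a larger one}; everything afterwards is routine point-set topology together with the elementary observation that deleting $v$ instead of $X$ fuses all of $\cC_X(Y)$ into the single component of $G-(X\setminus\{v\})$ through $v$, while leaving every member of $\cC_X(Y')$ untouched.
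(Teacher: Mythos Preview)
Your proof is correct and follows essentially the same approach as the paper: both pass to the smaller index $X\setminus\{v\}$ for a vertex $v$ distinguishing the two critical sets, observe that one family collapses under $\cb$ to a single isolated point of the smaller component space while the other avoids it, and then use $f[\overline{A}]\subseteq\overline{f[A]}$ to separate the closures. The only cosmetic difference is that the paper picks $v\in Y'\setminus Y$ (so $\cC_X(Y')$ collapses) whereas you pick $v\in Y\setminus Y'$; since the statement is symmetric this is immaterial.
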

\begin{proof}
Let $X\in\cX$ and any two distinct $Y,Y'\in\crit(X)$ be given.
Without loss of generality we find some $x\in Y'\setminus Y$, and we set $X^-=X\setminus\{x\}$.
It is known (and not hard to verify) that a continuous map $h$ satisfies $h\big[\,\overline{A}\,\big]\subseteq\overline{h[A]}$ for each subset $A$ of its domain (cf.~\cite[Theorem 7.2]{Willard}). Thus we compute
\begin{align*}
\overline{\alpha_X[\cC_X(Y)]}\cap\overline{\alpha_X[\cC_X(Y')]}&\subseteq\overline{\fraka_{X,X^-}[\alpha_X[\cC_X(Y)]]}\cap\overline{\fraka_{X,X^-}[\alpha_X[\cC_X(Y')]]}\\
&\overset{(\ref{Eq:CsystemCommute})}{=}\overline{\alpha_{X^-}[\cb_{X,X^-}[\cC_X(Y)]]}\cap\overline{\alpha_{X^-}[\cb_{X,X^-}[\cC_X(Y')]]}\\
&=\overline{\alpha_{X^-}[\cC_X(Y)]}\cap\{\alpha_{X^-}(C_{X^-}(Y')\}\;.
\end{align*}
Write $C$ for $C_{X^-}(Y')$.
The point $\alpha_{X^-}(C)$ is isolated in $\alpha (\cC_{X^-})$ since $\{\alpha_{X^-}(C)\}$ is open.
Therefore, in order to verify (\ref{Eq:distinctCritXDisjointClosures}) for $Y$ and $Y'$ it suffices to show $C\notin\cC_X(Y)$.
The component $C$ meets $Y'$ in $x$ and hence is not a component of $G-X$.
In particular, it cannot be contained in $\cC_X(Y)\subseteq\cC_X\cap\cC_{X^-}$.
\end{proof}

\begin{lemma}\label{distinctCritXDisjointClosuresSuffices}
Let $X\in\cX$ be given with a Hausdorff compactification $(\alpha(\cC_X),\alpha_X)$ of $\cC_X$ satisfying (\ref{Eq:distinctCritXDisjointClosures})
for all distinct $Y,Y'\in\crit(X)$. Then $\finC{X}\le (\alpha(\cC_X),\alpha_X)$ holds.
\end{lemma}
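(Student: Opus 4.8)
The statement to prove is Lemma~\ref{distinctCritXDisjointClosuresSuffices}: given $X\in\cX$ and a \HD\ \comp\ $(\alpha(\cC_X),\alpha_X)$ of $\cC_X$ satisfying~(\ref{Eq:distinctCritXDisjointClosures}) for all distinct $Y,Y'\in\crit(X)$, we have $\finC{X}\le(\alpha(\cC_X),\alpha_X)$. The plan is to construct explicitly a continuous surjection $g\colon\alpha(\cC_X)\to\finC{X}$ that fixes $\cC_X$, i.e. $g\colon(\alpha(\cC_X),\alpha_X)\ge(\finC{X},\id)$ (after identifying $\cC_X$ with its images under both embeddings). Since $\finC{X}=\cC_X\sqcup\crit(X)$ and the remainder $\crit(X)$ is finite, the map $g$ should send each vertex of $\cC_X$ (viewed inside $\alpha(\cC_X)$ as $\alpha_X(C)$) to $C$ itself, and should collapse the ``boundary mass'' sitting over $\cC_X(Y)$ to the single point $Y$, for each $Y\in\crit(X)$.

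Concretely, first I would fix the relevant closed sets. For each $Y\in\crit(X)$ put $K_Y:=\overline{\alpha_X[\cC_X(Y)]}\setminus\alpha_X[\cC_X(Y)]$, the ``boundary'' of the copy of $\cC_X(Y)$ inside $\alpha(\cC_X)$; more robustly, let $K_Y:=\overline{\alpha_X[\cC_X(Y)]}$ and then subtract the isolated points. By~(\ref{Eq:distinctCritXDisjointClosures}) the sets $\overline{\alpha_X[\cC_X(Y)]}$ for distinct $Y$ are pairwise disjoint, hence so are the $K_Y$; since $\crit(X)$ is finite, each $K_Y$ is also open in their union, and in fact $\alpha(\cC_X)$ is the disjoint union of the finitely many sets $\overline{\alpha_X[\cC_X(Y)]}$ for $Y\in\crit(X)$ together with the finitely many isolated points $\alpha_X[\cC_X^-]$ and the isolated points $\alpha_X[\cC_X(Y)\setminus\cC_X(Y)]$ — wait, more carefully: the remainder $\alpha(\cC_X)\setminus\alpha_X[\cC_X]$ is covered by the closures $\overline{\alpha_X[\cC_X(Y)]}$ for those $Y\in 2^X$ with $\cC_X(Y)$ infinite, i.e.\ $Y\in\crit(X)$ by Lemma~\ref{critXchar}, because $\cC_X$ is partitioned into the finite set $\cC_X^-$ plus the sets $\cC_X(Y)$ for $Y\in\crit(X)$, and $\alpha_X[\cC_X^-]$ is finite hence closed with no limit points in the remainder. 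Now define $g$ to be the identity on $\alpha_X[\cC_X]$ (sending $\alpha_X(C)\mapsto C$), and on each $K_Y$ (the non-isolated part of $\overline{\alpha_X[\cC_X(Y)]}$, which lies entirely in the remainder) send every point to $Y$; points of the remainder lying in some $K_Y$ are thereby all sent to $Y$, and there are no remainder points outside $\bigcup_Y K_Y$.

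Then I would check the three things. \emph{Surjectivity} and \emph{fixing $\cC_X$} are immediate from the definition (each $Y$ is hit since $K_Y\neq\emptyset$ as $\cC_X(Y)$ is infinite in a compact space). \emph{Continuity}: since the basic open sets of $\finC{X}$ are the singletons $\{C\}$ and the sets $\cO_{\finC{X}}(Y,\cC)=\cC\sqcup\{Y\}$ with $\cC\subseteq\cC_X(Y)$ cofinite, I compute preimages: $g^{-1}(\{C\})=\{\alpha_X(C)\}$, which is open since $\cC_X$ is discrete and $\alpha_X[\cC_X]$ is open in $\alpha(\cC_X)$ (as $\cC_X$ is locally compact, cf.\ \cite[Theorem~3.6.6]{EngelkingBook}); and $g^{-1}(\cC\sqcup\{Y\})=\alpha_X[\cC]\cup K_Y=\overline{\alpha_X[\cC_X(Y)]}\setminus\alpha_X[\cC_X(Y)\setminus\cC]$, which is open because $\overline{\alpha_X[\cC_X(Y)]}$ is open in $\alpha(\cC_X)$ (it is the complement of the finitely many disjoint closed sets $\overline{\alpha_X[\cC_X(Y')]}$, $Y'\neq Y$, together with the finite closed set $\alpha_X[\cC_X^-]$) and we are removing from it the finite, hence closed, set $\alpha_X[\cC_X(Y)\setminus\cC]$. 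This uses~(\ref{Eq:distinctCritXDisjointClosures}) in an essential way to know that $\overline{\alpha_X[\cC_X(Y)]}$ is open, and it uses finiteness of $\crit(X)$ (Lemma~\ref{powersetInducesFinPart}).

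The main obstacle is purely bookkeeping: making sure the remainder of $\alpha(\cC_X)$ is \emph{exactly} covered by the closures $\overline{\alpha_X[\cC_X(Y)]}$, $Y\in\crit(X)$, so that $g$ is well-defined on all of $\alpha(\cC_X)$, and that these closures are simultaneously open and closed so that both the preimage computations go through. Both facts follow from: (a) $\cC_X$ is partitioned into $\cC_X^-\sqcup\bigsqcup_{Y\in\crit(X)}\cC_X(Y)$ with $\cC_X^-$ and $\crit(X)$ finite (Lemma~\ref{powersetInducesFinPart}, Notation~\ref{CXminus}); (b) in a compact space, a set with finite complement in a dense discrete subset has all remainder points in the closure of that set, while a finite subset of the dense discrete set is closed with empty intersection with the remainder; and (c) hypothesis~(\ref{Eq:distinctCritXDisjointClosures}). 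I would state (b) as a one-line observation and then assemble $g$ and verify continuity as above.
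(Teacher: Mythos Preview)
Your proposal is correct and follows essentially the same route as the paper: both arguments establish the finite clopen partition $\alpha(\cC_X)=\alpha_X[\cC_X^-]\sqcup\bigsqcup_{Y\in\crit(X)}\overline{\alpha_X[\cC_X(Y)]}$ and define the same map $g$ collapsing each $\overline{\alpha_X[\cC_X(Y)]}$ onto $\overline{\cC_X(Y)}^{\finC{X}}$. The only cosmetic difference is that the paper verifies continuity via the Pasting Lemma (noting that each piece maps to a one-point \HDcomp ), whereas you compute preimages of basic open sets directly; both work, and the underlying facts used---isolatedness of points in $\alpha_X[\cC_X]$, finiteness of $\cC_X^-$ and of $\crit(X)$, and hypothesis~(\ref{Eq:distinctCritXDisjointClosures})---are identical.
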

\begin{proof}
The finite set $\alpha_X[\cC_X^-]$ (cf.~p.~\pageref{CXminus}) is closed in the \HD\ space $\alpha (\cC_X)$.
Moreover, since $\alpha_X[\cC_X]$ is open in $\alpha (\cC_X)$, 
we conclude that the clopen set $\alpha_X[\cC_X^-]$ avoids the closure of $\alpha_X[\cC_X(Y)]$ for all $Y\in\crit(X)$.
Therefore, (\ref{Eq:distinctCritXDisjointClosures}) gives rise to a finite partition of $\alpha (\cC_X)$ into closed sets:
\begin{align*}
\alpha (\cC_X)=\alpha_X[\cC_X^-]\sqcup\bigsqcup_{Y\in\crit(X)}\overline{\alpha_X[\cC_X(Y)]}\;.
\end{align*}
Let $f^-\colon\alpha_X[\cC_X^-]\to\cC_X^-$ send each $\alpha_X(C)$ to $C$.
This is continuous since $\alpha_X[\cC_X^-]$ carries the discrete subspace topology. 
For $Y\in\crit(X)$ we note that the closure of $\cC_X(Y)$ in $\finC{X}$ is the one-point \HDcomp\ of $\cC_X(Y)$, yielding some
\begin{align*}
f_Y\colon \big(\,\overline{\alpha_X[\cC_X(Y)]}\,,\,\alpha_X\rest\cC_X(Y)\,\big)\ge \overline{\cC_X(Y)}^{\finC{X}}.
\end{align*}
Since the domains of $f^-$ and the $f_Y$ form a finite partition of $\alpha(\cC_X)$ into closed sets, these continuous mappings combine to one continuous $f\colon \alpha(\cC_X)\to\finC{X}$ (cf. \cite[Theorem 18.3]{Munkres}, also known as `Pasting Lemma').
Then $f\colon (\alpha(\cC_X),\alpha_X)\ge \finC{X}$.
\end{proof}

\begin{theorem}\label{Csystem:leastAndGreatest}\label{minGquotientModG}
Let $G$ be any graph. The following hold up to $\cC$-equivalence:
\begin{enumerate}
\item $\minC$ is the least $\cC$-system with respect to $\le_{\cC}$.
\item $\cC^{\cU}$ is the greatest $\cC$-system with respect to $\le_{\cC}$.
\end{enumerate}
Moreover, the following hold up to topological equivalence:
\begin{enumerate}[resume]
\item $\minG$ is the coarsest \Ccomp .
\item $\modG$ is the finest \Ccomp .
\end{enumerate}
Therefore $\minG$ is a quotient of every \Ccomp\ which in turn is always a quotient of $\modG$. 
In particular, $\minG$ and $\modG/{\sim}$ are topologically equivalent.
\end{theorem}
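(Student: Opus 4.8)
The plan is to obtain (i) and (ii) directly from the structural lemmas already in place, then to transport those comparisons of \Csys s to the \Ccomp s they induce so as to get (iii), (iv) and the `therefore'-clause, and finally to compute the fibres of the canonical witness $\modG\ge\minG$ in order to identify $\minG$ with $\modG/{\sim}$. For (i) I would fix an arbitrary \Csys\ $\cC^\alpha$ and, for each $X\in\cX$, invoke Lemma~\ref{CsystemMinimalityInductionStep} to see that $(\alpha(\cC_X),\alpha_X)$ satisfies~(\ref{Eq:distinctCritXDisjointClosures}), then Lemma~\ref{distinctCritXDisjointClosuresSuffices} to produce a witness $\finC{X}\le(\alpha(\cC_X),\alpha_X)$; Lemma~\ref{Csystem:leForCompsExtendsToleC} makes these witnesses compatible, giving $\minC\le_{\cC}\cC^\alpha$, so $\minC$ is least up to $\cC$-equivalence. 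For (ii) I would use that $\cU_X=\beta(\cC_X)$ is the \emph{finest} \HDcomp\ of $\cC_X$: the \SC\ property extends each embedding $\alpha_X\colon\cC_X\hookrightarrow\alpha(\cC_X)$ to a witness $\cU_X\ge(\alpha(\cC_X),\alpha_X)$, and once more Lemma~\ref{Csystem:leForCompsExtendsToleC} yields compatibility, so $\cC^\alpha\le_{\cC}\cC^{\cU}$ and $\cC^{\cU}$ is greatest up to $\cC$-equivalence.

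Then (iii), (iv) and the `therefore'-clause should drop out: given any \Ccomp\ $\alpha G$ induced by some $\cC^\alpha$, I would feed (i) and (ii) into Lemma~\ref{Csystem:leCextendsToCcomps} to obtain witnesses $\alpha G\ge\minG$ and $\modG\ge\alpha G$, and then --- since $\minG$, $\alpha G$ and $\modG$ are all crude (`All \Ccomp s are crude') --- apply Proposition~\ref{MagicProp} to turn each of these inequalities of crude \Ocomp s into a topological equivalence with a quotient over the fibres of the witness. This makes $\minG$ the coarsest \Ccomp\ and $\modG$ the finest (both up to topological equivalence) and exhibits $\minG$ as a quotient of every \Ccomp\ and every \Ccomp\ as a quotient of $\modG$.

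For the `in particular'-part I would apply this with $\alpha G=\modG$: Lemma~\ref{Csystem:leCextendsToCcomps} provides a witness $f=\id_G\cup\psi\colon\modG\ge\minG$ with $\psi\colon\cU\twoheadrightarrow\F$ the coordinatewise map $(\,U_X\mid X\in\cX\,)\mapsto(\,f_X(U_X)\mid X\in\cX\,)$, where $f_X\colon\cU_X\ge\finC{X}$ is the \SC\ extension of the inclusion $\cC_X\hookrightarrow\finC{X}=\cC_X\sqcup\crit(X)$. The crux is to describe $f_X$ on the remainder: it fixes each principal ultrafilter $\{C\}$, and it sends a free ultrafilter $U$ to the unique $Y\in\crit(X)$ with $\cC_X(Y)\in U$, since $\cC_X(Y)\sqcup\{Y\}$ is clopen in $\finC{X}$ and hence has clopen $f_X$-preimage $\{\,U'\in\cU_X\mid\cC_X(Y)\in U'\,\}$. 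Combining this with Section~\ref{sec4}: an end tangle $\tau_\omega$ has every $U(\tau_\omega,X)$ principal at $C(X,\omega)$, so $\psi(\tau_\omega)$ is the direction $\omega\in\Omega\subseteq\F$; and an ultrafilter tangle $\tau$ with $X_\tau=Z$ has $f_X(U(\tau,X))=Z$ for $X\in\cX_\tau$ by Corollary~\ref{UF:criticalComponentsLiveInUFgeneralised} and $f_X(U(\tau,X))=C_X(Z)$ for $X\notin\cX_\tau$ by Lemma~\ref{UF:CXY}, so $\psi(\tau)=\langle Z\rangle=\langle X_\tau\rangle$. As $\omega\mapsto\tau_\omega$ and $Y\mapsto\langle Y\rangle$ are injective and no $\langle Y\rangle$ is an end (Observation~\ref{C=ends+crit}), the fibres of $\psi$ on $\cU=\Theta$ are exactly the singletons $\{\tau_\omega\}$ and the sets $\{\,\tau\in\Upsilon\mid X_\tau=Z\,\}$, i.e.\ precisely the ${\sim}$-classes; hence the partition $\{\,f^{-1}(\xi)\mid\xi\in\minG\,\}$ of $\modG$ is the one induced by ${\sim}$ (with singletons on $G$), so $\modG/\{\,f^{-1}(\xi)\mid\xi\in\minG\,\}$ is $\modG/{\sim}$, and Proposition~\ref{MagicProp} gives $\minG\cong\modG/{\sim}$.

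I expect the only genuine obstacle to be the explicit identification of the \SC\ extension $f_X$ on free ultrafilters and its interaction with the maps $X\mapsto C_X(X_\tau)$; everything else is bookkeeping on top of Lemmas~\ref{Csystem:leForCompsExtendsToleC}, \ref{Csystem:leCextendsToCcomps}, \ref{CsystemMinimalityInductionStep} and~\ref{distinctCritXDisjointClosuresSuffices} together with Proposition~\ref{MagicProp}.
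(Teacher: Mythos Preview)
Your proposal is correct and follows essentially the same route as the paper for (i)--(iv) and the `therefore'-clause: the same chain Lemma~\ref{CsystemMinimalityInductionStep} $\to$ Lemma~\ref{distinctCritXDisjointClosuresSuffices} $\to$ Lemma~\ref{Csystem:leForCompsExtendsToleC} for (i), the \SC\ property plus Lemma~\ref{Csystem:leForCompsExtendsToleC} for (ii), and then Lemma~\ref{Csystem:leCextendsToCcomps} together with Proposition~\ref{MagicProp}.

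The one place you go beyond the paper is the `in particular'-statement. The paper simply writes ``combine (iii) and (iv) with Proposition~\ref{MagicProp}'' and leaves the identification of the fibres of the canonical witness $\modG\ge\minG$ with the ${\sim}$-classes to the reader. You actually carry this out: you identify each $f_X\colon\cU_X\ge\finC{X}$ as the \SC\ extension of $\cC_X\hookrightarrow\finC{X}$ (correct by uniqueness of witnesses into \HD\ targets), compute its effect on free ultrafilters via the clopen sets $\cC_X(Y)\sqcup\{Y\}$, and then read off $\psi(\tau_\omega)=\omega$ and $\psi(\tau)=\langle X_\tau\rangle$ from Corollary~\ref{UF:criticalComponentsLiveInUFgeneralised} and Lemma~\ref{UF:CXY}. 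This is exactly the verification the paper suppresses, and your argument for it is sound.
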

\begin{proof}
(i). Let $\cC^\alpha$ be any \Csys . Lemmas~\ref{CsystemMinimalityInductionStep} and~\ref{distinctCritXDisjointClosuresSuffices} yield $\finC{X}\le (\alpha (\cC_X),\alpha_X)$ for all $X\in\cX$.
By Lemma~\ref{Csystem:leForCompsExtendsToleC} this implies $\cC^{\F}\le\cC^\alpha$.

(ii) is immediate from Lemma~\ref{Csystem:leForCompsExtendsToleC} and $\cU_X=\beta(\cC_X)$.

(iii) and (iv) follow from (i) and (ii), respectively, with Lemma~\ref{Csystem:leCextendsToCcomps}.

For the last two statements we combine (iii) and (iv) with Proposition~\ref{MagicProp}.
\end{proof}

\begin{theorem}\label{Ccomp:topologicalEquivalence}
Let $G$ be any graph. The following are equivalent:
\begin{enumerate}
\item $\minG$ and $\modG$ are topologically equivalent.
\item $\minC$ and $\cC^{\cU}$ are $\cC$-equivalent.
\item $G$ is \sol .
\item $\Omega=\F=\cU$.
\end{enumerate}
\end{theorem}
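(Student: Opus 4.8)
The plan is to dispatch the easy equivalences by inspection and reduce everything to the single substantial implication (i)$\Rightarrow$(iii). First the bookkeeping. If $G$ is \sol , then every $\cC_X$ is finite, so $\crit(X)=\emptyset$ for each $X$ by Lemma~\ref{critXchar}; hence $\finC{X}=\cC_X=\beta(\cC_X)=\cU_X$ all carry the discrete topology and the bonding maps $\cb_{X',X}$, $\eb_{X',X}$, $f_{X',X}$ coincide, so $\minC$ and $\cC^{\cU}$ are literally the same \Csys . This gives (iii)$\Rightarrow$(ii), and (ii)$\Rightarrow$(i) follows from Corollary~\ref{Csystem:CeqImpliesTopEq} applied to the induced \Ccomp s $\minG$ and $\modG$. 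Moreover toughness yields $\F=\Omega\sqcup\crit(G)=\Omega$ together with $\cU=\Theta=\Omega$ (the latter by Lemma~\ref{VGcompact}), so (iii)$\Rightarrow$(iv); conversely $\Omega=\cU$ forces $\Theta=\Omega$, whence $G$ is \sol\ by Lemma~\ref{VGcompact}, giving (iv)$\Rightarrow$(iii).

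For (i)$\Rightarrow$(iii) I would argue by contraposition. If $G$ is not \sol , then some $\cC_X$ is infinite; writing it as the finite disjoint union $\bigsqcup_{Y\in 2^X}\cC_X(Y)$ produces an infinite $\cC_X(Y)$, so $\crit(G)\neq\emptyset$ by Lemma~\ref{critXchar}. By Theorem~\ref{Csystem:leastAndGreatest} we have $\minC\le_{\cC}\cC^{\cU}$, hence $f:=\id_G\cup\psi^{\cU\F}\colon\modG\ge\minG$ by Lemma~\ref{Csystem:leCextendsToCcomps}, where the continuous surjection $\psi^{\cU\F}\colon\cU\twoheadrightarrow\F$ fixes $\Omega$. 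Assume for contradiction that $\minG$ and $\modG$ are topologically equivalent. By Proposition~\ref{MagicProp}, $\minG$ is then topologically equivalent to the quotient of $\modG$ over the fibres of $f$; passing to the \HDcomp s $\modG\setminus\mathring E$ and $\minG\setminus\mathring E$ of $\VG\setminus\mathring E$ (the relation $f$ induces only collapses remainder points), Lemma~\ref{HDcompQuotient} forces that fibre partition to be trivial, i.e.\ $\psi^{\cU\F}$ is injective and thus a bijection $\Upsilon\to\crit(G)$.

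The contradiction then comes from computing $\psi^{\cU\F}$ on the ultrafilter tangles. Given $\tau\in\Upsilon$: for $X\in\cX\setminus\cX_\tau$, Lemma~\ref{UF:CXY} shows that $U(\tau,X)$ is the principal ultrafilter at $C_X(X_\tau)$, which $f_X$ sends to $C_X(X_\tau)$, a point of $\finC{X}$ depending only on $X_\tau$; for $X\in\cX_\tau$ we have $X_\tau\in\crit(X)$ and $\cC_X(X_\tau)\in U(\tau,X)$ by Corollary~\ref{UF:criticalComponentsLiveInUFgeneralised}, while the comparison map $f_X\colon\beta(\cC_X)\ge\finC{X}$ constructed in the proof of Lemma~\ref{distinctCritXDisjointClosuresSuffices} sends every non-principal ultrafilter on $\cC_X$ containing $\cC_X(X_\tau)$ to the single point $X_\tau$. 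Hence $\psi^{\cU\F}(\tau)=\langle X_\tau\rangle$ depends only on $[\tau]_{\sim}$. Since $\crit(G)\neq\emptyset$, Theorem~\ref{critX=critUF}(ii) supplies distinct ultrafilter tangles $\tau\neq\tau'$ with $X_\tau=X_{\tau'}$, so $\psi^{\cU\F}(\tau)=\psi^{\cU\F}(\tau')$, contradicting injectivity. Therefore $G$ is \sol .

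The main obstacle is precisely this last computation: pinning down the action of the comparison map $\psi^{\cU\F}$ on $\Upsilon$, i.e.\ verifying that the quotient maps $f_X\colon\beta(\cC_X)\to\finC{X}$ carry the free ultrafilters arising from ultrafilter tangles onto their critical vertex sets. Everything else is routine manipulation of results already in hand. Note that a cardinality count cannot substitute for this: there exist graphs with $\vert\crit(G)\vert=\vert\Upsilon\vert=2^{\frakc}$ for which $\minG$ and $\modG$ are still inequivalent, the obstruction being exactly that the comparison map has nontrivial fibres.
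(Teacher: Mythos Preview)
Your argument is correct and follows the same route as the paper: dispose of the easy equivalences, then for (i)$\Rightarrow$(iii) argue that non-\soly\ makes the fibre relation of the comparison map $\modG\to\minG$ non-trivial and invoke Lemma~\ref{HDcompQuotient} on the \HD\ parts $\setminus\mathring E$. The only difference is packaging. The paper quotes the final clause of Theorem~\ref{minGquotientModG}, namely that $\minG$ and $\modG/{\sim}$ are topologically equivalent, as a black box; combined with Theorem~\ref{critX=critUF} this immediately makes ${\sim}$ non-trivial and finishes. You instead re-derive that equivalence by hand, computing $\psi^{\cU\F}(\tau)=\langle X_\tau\rangle$ from the explicit description of the maps $f_X$ in Lemma~\ref{distinctCritXDisjointClosuresSuffices}. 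That computation is valid (and essentially reproduces the surjectivity argument in the proof of Theorem~\ref{OmegaAndCritAreTangles}), but it is not the ``main obstacle'' you flag it as: it has already been absorbed into Theorem~\ref{minGquotientModG}, so you can simply cite it and shorten your proof to a couple of lines.
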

\begin{proof}
Both (iv)$\leftrightarrow$(iii) and (iii)$\to$(ii) are clear, whereas (ii)$\to$(i) holds by Corollary~\ref{Csystem:CeqImpliesTopEq}. Therefore, it suffices to show (i)$\to$(iii).

(i)$\to$(iii). 
Combining (i) with Theorem~\ref{minGquotientModG} yields $\modG\cong\modG/{\sim}$.
We assume for a contradiction that (iii) fails, witnessed by some $X\in\cX$ with $\cC_X$ infinite. 
Then $\crit(X)$ is non-empty and ${\sim}$ is non-trivial by Theorem~\ref{critX=critUF}, so Lemma~\ref{HDcompQuotient} yields
$(\modG/{\sim})\setminus\mathring{E}\lneq \modG\setminus\mathring{E}$
contradicting $\modG\cong\modG/{\sim}$ as desired.
\end{proof}

\begin{obs}
Let $G$ be any graph. There are four ways to describe $\minG$:
\begin{enumerate}
\item $\minG=\modG/{\sim}$ where ${\sim}$ can be described in terms of tangles.
\item $\minG=G\sqcup\Omega\sqcup\crit(G)$ involves only very basic combinatorics.
\item $\minG=G\sqcup\Tt$ is a tangle-type \comp .
\item $\minG=G\sqcup\Gamma$ is the coarsest \Ccomp .
\end{enumerate}
\end{obs}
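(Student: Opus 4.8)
The plan is to prove the Observation by checking its four descriptions one at a time, since each has essentially been set up already in Sections~\ref{sec4}--\ref{sec7}; the only thing to do is confirm that the identifications line up. I would take them in the order (iv), (ii), (iii), (i). Item (iv) is closest to the definitions: by the construction of a \Ccomp\ in Section~\ref{sec6}, the remainder of the \comp\ $\alpha G$ induced by a \Csys\ $\cC^\alpha$ is exactly the inverse limit $\cI^\alpha$. Applied to the \Csys\ $\minC=\{\finC{X},\eb_{X',X}\}$ this gives $\minG\setminus G=\cI^{\F}=\invLim\finC{X}=\F=\Gamma$, hence $\minG=G\sqcup\Gamma$; that $\minG$ is moreover the coarsest \Ccomp\ is precisely Theorem~\ref{Csystem:leastAndGreatest}(iii). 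For (ii) I would invoke Observation~\ref{C=ends+crit} (equivalently the second assertion of Theorem~\ref{OmegaAndCritAreTangles}), which identifies $\F$ with $\Omega\sqcup\{\,\langle X\rangle\mid X\in\crit(G)\,\}$ and $\langle X\rangle\mapsto X$ with a bijection onto $\crit(G)$; so $\minG=G\sqcup\Omega\sqcup\crit(G)$ as a set, the only combinatorics involved being Definition~\ref{criticalDef} of a critical vertex set.

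For (iii) I would quote the bijection $\tau\mapsto(\toC{\tau}{X}\mid X\in\cX)$ from $\Tt$ onto $\F$ established in Theorem~\ref{OmegaAndCritAreTangles}, between the $\aleph_0$-tangles of the explicitly described tame separation system $\St\subseteq S$ (with its collection of stars $\Ft$) and $\F=\Gamma$; transporting the inverse-limit topology of $\Gamma$ across this bijection exhibits the remainder of $\minG$ as the tangle space $\Tt$, so $\minG$ is a tangle-type \comp\ of $G$ in exactly the sense raised after Theorem~\ref{GisCompactifiedByEndsCrit}. Finally, for (i) I would cite the last line of Theorem~\ref{Csystem:leastAndGreatest}: $\minG$ and $\modG/{\sim}$ are topologically equivalent (via Proposition~\ref{MagicProp}). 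The relation ${\sim}$ is defined purely in tangle terms, namely $\tau\sim\tau'$ iff $X_\tau=X_{\tau'}$, where $X_\tau$ is the least $X\in\cX$ for which $\tau$ induces a free ultrafilter on $\cC_X$, equivalently the least $X$ with $\{\,s_{C\to X}\mid C\in\cC_X\,\}\subseteq\tau$; and Theorem~\ref{critX=critUF} additionally identifies these $X_\tau$ with the critical vertex sets, which is what ties (i) back to (ii).

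The one point deserving a sentence of justification --- and the only place I would expect any friction --- is that the set-level bijections $\F\cong\Tt$ and $\F\cong\Omega\sqcup\crit(G)$ are compatible with the topologies in play, so that the four items really describe the same topological space and not merely the same underlying set. Here this is automatic: in each description the right-hand remainder carries no topology of its own, it is by definition $\F=\Gamma$ with its inverse-limit topology, so there is nothing further to verify. Consequently the Observation follows by assembling Observation~\ref{C=ends+crit}, Theorem~\ref{OmegaAndCritAreTangles} and Theorem~\ref{Csystem:leastAndGreatest} together with the construction of \Ccomp s, and I do not anticipate a substantive obstacle.
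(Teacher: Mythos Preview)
Your proposal is correct and, if anything, more thorough than the paper itself: the paper states this Observation without proof, treating it purely as a summary of what has been established in Sections~\ref{sec4}--\ref{sec7}. Your assembly of Observation~\ref{C=ends+crit}, Theorem~\ref{OmegaAndCritAreTangles}, and Theorem~\ref{Csystem:leastAndGreatest} (together with the definition of a \Ccomp) is exactly the intended justification.
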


\bibliographystyle{amsplain}
\bibliography{EndsTanglesCritBIB}

\end{document}